\newcommand\bQ{\ensuremath{\mathbb{Q}}}
\newcommand\bF{\ensuremath{\mathbb{F}}}
\newcommand\bP{\ensuremath{\mathbb{P}}}
\newcommand\bZ{\ensuremath{\mathbb{Z}}}
\newcommand\bC{\ensuremath{\mathbb{C}}}
\newcommand\cL{\ensuremath{\mathcal{L}}}
\newcommand\cO{\ensuremath{\mathcal{O}}}
\newcommand\cP{\ensuremath{\mathcal{P}}}
\newcommand\cS{\ensuremath{\mathcal{S}}}
\newcommand\cT{\ensuremath{\mathcal{T}}}
\newcommand\sD{\ensuremath{\mathsf{D}}}
\newcommand\mysf{\ensuremath{\mathsf{f}}}
\newcommand\sF{\ensuremath{\mathsf{F}}}
\newcommand\sM{\ensuremath{\mathsf{M}}}
\newcommand\sP{\ensuremath{\mathsf{P}}}
\newcommand\sQ{\ensuremath{\mathsf{Q}}}
\newcommand\sS{\ensuremath{\mathsf{S}}}
\newcommand\sr{\ensuremath{\mathsf{r}}}
\newcommand\myss{\ensuremath{\mathsf{s}}}
\newcommand\st{\ensuremath{\mathsf{t}}}
\newcommand\su{\ensuremath{\mathsf{u}}}
\newcommand\sx{\ensuremath{\mathsf{x}}}
\newcommand\sy{\ensuremath{\mathsf{y}}}
\newcommand\sz{\ensuremath{\mathsf{z}}}
\newcommand\ve{\ensuremath{\varepsilon}}
\newcommand\phr{\ensuremath{\phantom{r}}}
\DeclareMathOperator\tr{tr}
\DeclareMathOperator\Jac{Jac}
\newcommand\case[1]{{\ensuremath{{\mathrm(}{\mathit{#1\/}}{\mathrm)}}}}
\theoremstyle{plain}
\newtheorem{theorem}{Theorem}[section]
\newtheorem{lemma}[theorem]{Lemma}
\newtheorem{prop}[theorem]{Proposition}
\newtheorem{cor}[theorem]{Corollary}
\newtheorem*{propnn}{Proposition~\ref{prop:divCantor}}
\theoremstyle{remark}
\newtheorem{rem:helltorsionbound}[theorem]{Remarks}
\newtheorem{rem:Cantor}[theorem]{Remark}
\newtheorem{rem:endintro}[theorem]{Remarks}
\newtheorem{rem:genbound}[theorem]{Remark}
\newtheorem{rem:cexhyperellbound}[theorem]{Remark}
\newtheorem{rem:deltabound}[theorem]{Remark}
\newtheorem{rem:gammabound}[theorem]{Remark}
\newtheorem{defn}[theorem]{Definition}
\newtheorem{defnPiDelta}[theorem]{Definition}
\newtheorem{ex:cantor}[theorem]{Example}
\newtheorem{ex:sixtorsion}[theorem]{Example}
\newtheorem{ex:sixtorsionfive}[theorem]{Example}
\newtheorem{ex:seventorsion}[theorem]{Example}
\newtheorem{ex:sevengthree}[theorem]{Example}
\newtheorem{ex:x015w3}[theorem]{Example}
\title[Number of points of given order]{On the number of points of given order on odd-degree hyperelliptic curves}
\author{John Boxall}
\address{Laboratoire de Math\'ematiques Nicolas Oresme, UMR CNRS 6139, Campus 2, Universit\'e de Caen-Normandie, 14032 Caen cedex, France}
\email{john.boxall@unicaen.fr}
\thanks{I would like to thank David Grant for drawing my attention to the work of Cantor \cite{Ca94} and \^Onishi \cite{On05} and his numerous comments on preliminary versions of this paper.  I would also like to thank the referees for suggesting a number of improvements and a comment that led to the addition of Remark~\ref{rem:Cantor}.}
\keywords{Algebraic curves, Jacobians, Torsion points}
\date{\today}
\keywords{Hyperelliptic curves, Jacobian varieties, Torsion points}
\subjclass{14H40, 14H45, 14G17}
\begin{document}

\begin{abstract}
For integers $N\geq 2$ and $g\geq 1$, we study bounds on the cardinality of the set of points of order dividing $N$ lying on a hyperelliptic curve of genus $g$ embedded in its jacobian using a Weierstrass point as base point. This leads us to revisit division polynomials introduced by Cantor in 1995 and strengthen a divisibility result proved by him. Several examples are discussed. 
\end{abstract}

\maketitle

\section{Introduction}
Let $X$ be an irreducible smooth proper curve of genus $g\geq 2$ over an algebraically closed field $k$ and let $J$ be the jacobian variety of $X$. Fix a closed point $\infty\in X$ and embed $X$ in $J$ using $\infty$ as base point. Thus $\xi\in X$ is mapped to the divisor class of $[\xi]-[\infty]$. If $N$ is a positive integer, we denote by $J[N]$ the subgroup of $J(k)$ of points of order dividing $N$, by $J[N]^*$ the subset of points of exact order $N$ and by $\tilde{J}[N]$ the set $J[N]-J[2]$. Throughout the paper, we reserve the letter $p$ for the characteristic of $k$, allowing $p=0$. If $p>0$, we say that $p$ is \emph{purely inseparable for $X$} if the multiplication-by-$p$ isogeny of $J$ is purely inseparable. One has the following bound on $\sharp\, (X\cap J[N])$. 

\begin{prop} \label{prop: generalbound}  Keep the notation just introduced.  

\case{a} $X\cap J[2]^*$ is empty unless $X$ is hyperelliptic and $\infty$ is a Weierstrass point, in which case 
$\sharp\, (X\cap J[2]^*) = 2g+1$ if $p\neq 2$ and $\sharp\, (X\cap J[2]^*) \leq g$ if $p=2$.
                                                         
\case{b} Suppose that $N\geq 3$. We have
\begin{equation*}
\sharp\, (X\cap J[N])\leq g(N-1)^2,
\end{equation*}
unless $p$ is purely inseparable for $X$ and $N-1$ is a power of $p$, in which case
\begin{equation*}
\sharp\, (X\cap J[N])\leq \begin{cases} 25g& \text{ if $N=3$ and $p=2$,}\\
                                                               g(N+1)^2 & \text{otherwise. }
                                                                               \end{cases}
\end{equation*}                                                                          
\end{prop}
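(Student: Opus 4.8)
The plan is to reformulate membership in $J[N]$ as a condition on divisor classes and then bound the resulting set by an intersection number on $J$. Writing $P_\xi=\iota(\xi)=[\xi]-[\infty]$, a point $\xi$ lies in $X\cap J[N]$ if and only if $[N]P_\xi=0$, i.e.\ $N[\xi]\sim N[\infty]$. For part \case{a} I would argue directly: a point $\xi\neq\infty$ lies in $X\cap J[2]^*$ exactly when there is a rational function with divisor $2[\xi]-2[\infty]$, that is, a degree-two map $X\to\bP^1$ totally ramified at both $\xi$ and $\infty$. This forces $X$ to be hyperelliptic with $\infty$ a Weierstrass point, and the $\xi$ in question are precisely the remaining ramification points of the hyperelliptic pencil. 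Counting these by Hurwitz gives $2g+1$ when $p\neq2$, and, because the pencil is wildly ramified when $p=2$, at most $g$ in that case.

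For part \case{b} the key observation is the identity $[N-1]P_\xi=-P_\xi$, valid for every $\xi\in X\cap J[N]$ since $[N]P_\xi=0$. Fix once and for all a translate $\Theta'$ of the theta divisor containing the curve $-\iota(X)$; such a translate of $W_{g-1}$ exists because every Abel--Jacobi image lies on a theta divisor. Then $-P_\xi\in-\iota(X)\subseteq\Theta'$ for every relevant $\xi$, so $P_\xi\in[N-1]^{-1}(\Theta')=[N-1]^*\Theta'$. Granting that the intersection is proper, i.e.\ $\iota(X)\not\subseteq[N-1]^*\Theta'$, I would conclude
\[
\sharp\,(X\cap J[N])\le \iota(X)\cdot[N-1]^*\Theta'=(N-1)^2\,(\iota(X)\cdot\Theta)=g(N-1)^2,
\]
using that all local intersection multiplicities are at least $1$, the numerical equivalence $[N-1]^*\Theta'\equiv(N-1)^2\Theta$, and the Poincar\'e formula $\iota(X)\cdot\Theta=g$. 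Note this single argument captures every point of $X\cap J[N]$, including $\infty$ itself and any $2$-torsion, so no separate bookkeeping is needed.

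The main obstacle is precisely the propriety hypothesis $\iota(X)\not\subseteq[N-1]^*\Theta'$, equivalently $[N-1]\bigl(\iota(X)\bigr)\subseteq\Theta'$, a highly nongeneric osculation condition on the curve. I expect to show that it can hold only when $[N-1]$ fails to be separable on the image of $X$, that is (under the standing hypotheses) only when $N-1$ is a power of $p$ and $p$ is purely inseparable for $X$; outside this regime the intersection is proper and the bound $g(N-1)^2$ holds. In the exceptional regime I would instead exploit the companion identity $[N+1]P_\xi=P_\xi\in\iota(X)$, replacing $\Theta'$ by a theta translate containing $\iota(X)$ and $[N-1]$ by $[N+1]$; the same computation then gives the weaker bound $g(N+1)^2$, provided this second intersection is proper. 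The residual case $N=3$, $p=2$ is exactly the one in which \emph{both} $N-1=2$ and $N+1=4$ are powers of $p$ (an elementary check shows this pair is the unique such coincidence for $N\geq3$), so both arguments degenerate at once; I would treat it by a separate, more hands-on analysis, which is what produces the isolated constant $25g$. Carrying out the separable/inseparable dichotomy cleanly — and verifying that the $[N-1]$- and $[N+1]$-arguments cannot degenerate simultaneously except in the listed exceptions — is where the real work lies.
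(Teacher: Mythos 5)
Your strategy is essentially the one the paper itself uses (following Coleman--Kaskel--Ribet): \case{a} via the ramification of the hyperelliptic pencil, and \case{b} by trapping $X\cap J[N]$ inside a proper intersection with something algebraically equivalent to $m^2\Theta$ for $m\equiv 1\pmod N$. You phrase \case{b} dually --- intersecting $X$ with $[N-1]^*\Theta'$ rather than intersecting $(1-N)X$ with a theta translate containing $X$ --- but the numerics ($m^*\Theta\equiv m^2\Theta$, $(X.\Theta)=g$) and the case split over $|m|=N-1$, $N+1$, $5$ are identical. Part \case{a} is fine. The difficulty with part \case{b} is that the two points you yourself flag as ``where the real work lies'' are precisely the content of the proof; as written your argument is a reduction of the statement to two unproved claims, not a proof.

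Concretely, two things are missing. First, the separability dichotomy: you must actually show that if $p$ is not purely inseparable for $X$ or $|m|$ is not a power of $p$, then the relevant intersection is proper. The paper does this (Lemma~\ref{lem:CKRlemma}) by factoring $[m]|_X$ as a separable map $\sigma$ followed by a purely inseparable one, identifying $mX$ with a Frobenius twist of $\sigma_*X$, and invoking the Hurwitz genus formula to force $\sigma$ to be an isomorphism --- which would make the separable part of $m:J\to J$ an automorphism, contradicting the hypothesis on $m$. Your sketch gives no indication of how you would establish that the osculation ``can hold only when $[N-1]$ fails to be separable.'' Second, fixing $\Theta'$ \emph{once and for all} is not enough even once you know $[N-1](X)\neq -X$: the particular translate of $\Theta$ through $-\iota(X)$ that you happen to choose could perfectly well also contain the curve $[N-1](X)$, since a theta divisor contains far more than a single Abel--Jacobi curve. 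The translate must be chosen adaptively: first produce a point $Q\in[N-1](X)$ with $Q\notin -\iota(X)$, then find a translate of $\Theta$ containing $-\iota(X)$ but avoiding $Q$. That existence statement is the paper's Lemma~\ref{lem:goodQexists}, and its proof requires a genuine general-position argument with non-exceptional special divisors (geometric Riemann--Roch, Lemma~\ref{lem:notexspec}), not just genericity. Finally, for $N=3$, $p=2$ you promise a ``hands-on analysis'' yielding $25g$ without saying what it is; the actual mechanism is simply to take $m=-5\equiv 1\pmod 3$, whose absolute value is not a power of $2$, and run the very same intersection bound --- no separate analysis is needed, but it does need to be said.
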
  

Although this is an easy consequence of the Riemann-Roch theorem and well-known properties of the jacobian, we have been unable to find this or a similar statement in the literature. The proof is a simple adaptation to arbitrary characteristic of an idea of Coleman-Kaskel-Ribet \cite{CoKaRi99}. Since it uses methods of a somewhat different nature to those of the rest of the paper, we relegate it to the Appendix (see \S~\ref{sec:proofgeneralbound}). 

One can ask whether these bounds are sharp. The main purpose of this paper is to show that, at least in the case of hyperelliptic curves and $\infty$ is a Weierstrass point, very often they are not.  

So suppose from now on that $X$ be a hyperelliptic curve and that $\infty$ is a Weierstrass point.  Then $X-\{\infty\}$ is affine and has a model of the form $y^2+Q(x)y=P(x)$ where $P(x)$ is a monic polynomial of degree $2g+1$ and $Q(x)$ has degree at most $g$. (When $p\neq 2$, such a model exists with $Q(x)=0$.) We denote by $X_{P,Q}$ the curve corresponding to the polynomials $P$ and $Q$ and by $J_{P,Q}$ its jacobian. When there is no risk of confusion we usually abbreviate these to $X$ and $J$ and similarly for the notations $\tilde{\cT}_{P,Q,N}$, $\tilde{U}_{P,Q,N}(x)$, \dots introduced below.  However we shall always keep the index $N$ (which denotes an integer not less than $2$). We always think of $X$ as embedded in $J$ using $\infty$ as base point. 

The case $N=2$ is covered by Proposition~\ref{prop: generalbound}.  When $3\leq N\leq 2g$ (and $p\neq 2$), Zarhin \cite{Za19} has proved that $X\cap \tilde{J}[N]$ is empty (see also Lemma~\ref{lem:first}). Here we give bounds for $\sharp (X\cap \tilde{J}[N])$ when $N\geq 2g+1$.  

The hyperelliptic involution $\iota$ of $X$ fixes $\infty$ and sends the point $(x_0,y_0)$ to $(x_0, -Q(x_0)-y_0)$. Also, $(x_0,y_0)$ has order $N$ if and only if $\iota(x_0,y_0)$ has order $N$. Hence, since $N\geq  3$, points of order $N$ come in pairs with the same $x$-coordinate $x_0$. Define
\begin{equation*}
\tilde{\cT}_N=\tilde{\cT}_{P,Q,N}=\{x_0\in k\mid (x_0,y_0) \text{ and } (x_0,-Q(x_0)-y_0)  \in \tilde{J}[N]\}
\end{equation*}
and
\begin{equation*}
\tilde{U}_N(x)=\tilde{U}_{P,Q,N}(x)=\prod_{x_0\in \tilde{\cT}_{N}}(x-x_0).
\end{equation*} 

Let $\{p_i\}_{0\leq i\leq 2g}$ and $\{q_i\}_{0\leq i\leq g}$ be two sets of variables and let $\Lambda=\bZ[p_0,\dots, p_{2g},q_0,\dots q_g]$ be the polynomial ring. Set $\sP(\sx)=p_0+p_1\sx+\cdots +p_{2g}\sx^{2g}+\sx^{2g+1}$ and $\sQ(\sx)=q_0+q_1\sx+\cdots +q_{g}\sx^g$, so the {\lq\lq}universal{\rq\rq} hyperelliptic curve of genus $g$ has equation
\begin{equation*}
\sy^2+\sQ(\sx)\sy=\sP(\sx)
\end{equation*}
and the equation for $X_{P,Q}$ is obtained by specializing the coefficients of $\sP$ and $\sQ$ to those of $P$ and $Q$, $\sx$ to $x$ and $\sy$ to $y$. For every integer $N\geq 2g+1$, we construct a {\lq\lq}universal{\rq\rq} polynomial $\Delta_{\sP,\sQ,N}(\sx)\in \Lambda[\sx]$ (see Definition~\ref{defPiDelta}). Denote by $\varDelta_{P,Q,N}(x)$ the specialization of $\Delta_{\sP,\sQ,N}(\sx)$ to the curve $X_{P,Q}$ and abbreviate $\varDelta_{P,Q,N}(x)$ to $\varDelta_N(x)$ if there is no risk of confusion. Thus $\varDelta_N(x)$ is an element of $k[x]$. Then $\Delta_{\sP,\sQ,N}(\sx)$ and $\varDelta_{N}(x)$ enjoy the following properties. 

\begin{theorem} \label{th:helltorsionbound} Suppose that $N\geq 2g+1$, and that $X=X_{P,Q}$ is a hyperelliptic curve of genus $g$. 

\case{a} Define 
\begin{equation*}
\delta_{N,g}=\begin{cases} g(N+2)(N-2g)/2  \text{ if $N\geq 2g+2$ is even},\\ 
                                                            g(N+1)(N-2g+1)/2  \text{ if $N\geq 2g+1$ is odd}.\end{cases}
\end{equation*}
Then $\Delta_{\sP,\sQ,N}(\sx)$ has degree $\delta_{N,g}$ as a polynomial in $\sx$. Hence $\varDelta_N(x)$ has degree at most $\delta_{N,g}$. Furthermore, $\tilde{U}_N(x)^g$ divides $\varDelta_N(x)$.  

\case{b} In particular, if $\varDelta_N(x)\neq 0$, then the $\sharp \, (X\cap \tilde{J}[N])\leq 2\frac{\deg{(\varDelta_N(x))}}{g}$. 

\case{c} Suppose that $p=0$ or $p> N$. Then $\varDelta_N(x)$ has degree equal to $\delta_{N,g}$. In particular, $\varDelta_N(x)\neq 0$ and so 
\begin{equation*}
\sharp\, (X\cap \tilde{J}[N])\leq \begin{cases} (N+2)(N-2g)  \text{ if $N\geq 2g+2$ is even},\\ 
                                                            (N+1)(N-2g+1)  \text{ if $N\geq 2g+1$ is odd}.\end{cases}
\end{equation*}\end{theorem}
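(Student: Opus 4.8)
The statement in \case{c} follows from \case{a} and \case{b} as soon as one knows that $\deg \varDelta_N(x) = \delta_{N,g}$ exactly. Indeed, \case{a} already gives $\deg \varDelta_N(x) \le \delta_{N,g}$, and substituting $\deg \varDelta_N(x) = \delta_{N,g}$ into \case{b} yields precisely the two displayed bounds according to the parity of $N$. So the entire problem reduces to showing that the coefficient of $x^{\delta_{N,g}}$ in $\varDelta_N(x)$ does not vanish whenever $p = 0$ or $p \ge N$. The plan is to keep track, throughout the construction of $\varDelta_N$ carried out in the proof of \case{a}, of how this top coefficient depends on $P$ and $Q$. Since $\varDelta_N$ is assembled from Cantor's division polynomials \cite{Ca94}, its coefficients are universal polynomials with integer coefficients in the coefficients of $P$ and $Q$; in particular the coefficient $c_N$ of $x^{\delta_{N,g}}$ is such a polynomial, defined over the prime ring. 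It therefore suffices to identify $c_N$ and to show that it is the reduction of a nonzero integer all of whose prime factors are strictly less than $N$.

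To isolate $c_N$ I would introduce the weighting in which $x$ has weight $2$ and $y$ has weight $2g+1$, the pole orders at $\infty$, and extend it to the coefficients of $P$ and $Q$ so that the defining equation becomes homogeneous. Under this grading, the top-degree-in-$x$ part of every polynomial entering the construction should be governed solely by the leading (monic) term of $P$, so that a weighting argument would show $c_N$ to be a universal constant $\tilde c_N \in \bZ$, independent of the remaining coefficients of $P$ and of $Q$. It then suffices to compute $\tilde c_N$ from the formal expansion at $\infty$, using the leading behaviour $x \sim t^{-2}$, $y \sim t^{-(2g+1)}$ in a local parameter $t$, and to read off the leading term of the division-polynomial recursion there.

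I expect $\tilde c_N$ to come out as a nonzero product of factorials and binomial coefficients bounded by $N$, which is exactly the shape forced by the leading-term recursion of Cantor's polynomials, so that every prime dividing $\tilde c_N$ is $< N$. Granting this, the hypothesis $p = 0$ or $p \ge N$ guarantees $c_N = \tilde c_N \not\equiv 0 \pmod p$, hence $\deg \varDelta_N(x) = \delta_{N,g}$ and in particular $\varDelta_N(x) \neq 0$. Applying \case{b} with $\deg \varDelta_N(x) = \delta_{N,g}$ then gives the asserted inequality in each parity, completing the proof.

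The main obstacle is precisely the explicit evaluation of the leading constant $\tilde c_N$ together with the proof that its prime divisors remain below $N$; everything else is bookkeeping. This is where the recursion satisfied by the division polynomials must be exploited in earnest: one needs to solve, at least at the level of leading coefficients, the same recursion used to define $\varDelta_N$, and then to control the arithmetic of the resulting factorial-type expression. The threshold $p \ge N$ ought to emerge as the exact condition under which no factor of $\tilde c_N$ is killed, which would also explain why smaller characteristics are genuinely excluded from \case{c}.
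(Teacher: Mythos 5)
Your proposal only engages with \case{c}; \case{a}, which carries the real content of the theorem, is assumed wholesale. You refer to ``the construction of $\varDelta_N$ carried out in the proof of \case{a}'' as if it were already available, but the existence of $\varDelta_N(x)$ with $\deg{\varDelta_N}\leq\delta_{N,g}$ and, above all, the divisibility $\tilde{U}_N(x)^g\mid\varDelta_N(x)$ are exactly what has to be proved. In the paper this takes the machinery of \S\S~\ref{sec:HSanduniv}--\ref{sec:hyperelliptic}: the matrix $M_N(x)$ of Hasse--Schmidt derivatives, the criterion of Theorem~\ref{th:nscJN}, the order-of-vanishing argument of Theorem~\ref{th:genbound}~\case{a} applied to the leftmost subdeterminant $\varGamma_N(x)$ (for which $N-j_{\mu+1}=g$, which is where the exponent $g$ comes from), and Proposition~\ref{prop:powfxdivSigma} to extract the factor $F(x)^{\mu(\mu+1)/2}$ and bring the degree down to $\delta_{N,g}$. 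None of this appears in your write-up, and since \case{b} is an immediate consequence of \case{a}, nothing of \case{a}--\case{b} is actually established.

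For \case{c} your reduction is correct, and your strategy (universal integral coefficients, an argument showing the top coefficient is a constant depending only on the leading terms, then an arithmetic analysis of that constant) is essentially the paper's; the paper gets the independence from the lower-order coefficients of $P$ and $Q$ by the induction in Lemma~\ref{lem:Cmn}~\case{b} rather than by a weight argument, which is a cosmetic difference. But you explicitly defer the one step that constitutes the proof: the evaluation of the constant. The paper carries it out in Proposition~\ref{prop:valleadcoeffs} and Corollary~\ref{cor:degDelta} by scaling rows and columns of the leading-coefficient matrix so as to reduce to a binomial matrix $T_n$ with $\det{T_n}=1$, obtaining the explicit value $2^{(\nu-1)(\mu+1)}\bigl(\prod_{i=1}^{\mu+1}C_{2g+2i-1,\nu}\bigr)\bigl(\prod_{j=1}^{\mu+1}(j-1)!\bigr)/\prod_{j=1}^{\mu+1}(\nu+j-1)!$. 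Note that your guessed conclusion --- that every prime factor of the constant is strictly less than $N$ --- is not what this formula gives: for $N$ odd the factor $C_{2g+2\mu+1,\nu}$ is the double factorial $1\cdot 3\cdots N$, so $N$ itself divides the constant (for $g=1$ this is the familiar leading coefficient $N$ of the classical division polynomial $\psi_N$). So the deferred computation is not bookkeeping; it is the proof, and the arithmetic of the resulting constant is more delicate than you anticipate, in particular at the boundary case $p=N$.
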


When $2\leq p\leq N$, the degree of $\Delta_N(x)$ can be strictly less than $\delta_{N,g}$. Thus, if $\varDelta_N(x)\neq 0$, the bound on $\sharp \, (X\cap \tilde{J}[N])\leq 2\frac{\deg{(\varDelta_N(x))}}{g}$ in \case{b} may be better than the one given in \case{c}. Hence in all cases where $\varDelta_N(x)\neq 0$, Theorem~\ref{th:helltorsionbound} is a considerable improvement on Proposition~\ref{prop: generalbound},  since it replaces the $O(gN^2)$ bound there by a $O(N^2)$ bound.

However it is important to stress that the condition $\varDelta_N(x)\neq 0$ is essential. For example, let $X$ be the curve with a affine model $y^2+y=x^5$, and take $p=2$ and $N=5$. We shall see in Example~\ref{ex:cantor} that $\varDelta_5(x)=0$ and that $\sharp\, (X\cap \tilde{J}[5])=32$, whereas, in cases where it applies Theorem~\ref{th:helltorsionbound} gives $\sharp\, (X\cap \tilde{J}[5])\leq 12$. Similarly when $p=5$ and $X$ is the curve $y^2=x^5-x$ (see Example~\ref{ex:sixtorsion}), then $\varDelta_{6}(x)$ vanishes and $\sharp\, (X \cap \tilde{J}[6])=40$ whereas  Theorem~\ref{th:helltorsionbound}  would give $\sharp\, (X\cap \tilde{J}[6])\leq 16$.  Indeed, in general we do not know how to improve on Proposition~\ref{prop: generalbound} when $2\leq p\leq N$. Nevertheless, we can do this when $N$ is not much bigger than $2g+1$. For example, we have:

\begin{prop} \label{prop:2g12} For all hyperelliptic curves $X$ of genus $g$ in any characteristic, we have
\begin{equation*}
\sharp\, (X\cap \tilde{J}[2g+1])\leq 8g^2   \quad \text{and} \quad  \sharp\, (X\cap \tilde{J}[2g+2])\leq 8g^2+4g.
\end{equation*}
\end{prop}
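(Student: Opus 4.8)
The plan is to reduce to an explicit counting problem via Riemann--Roch and then control its size by combining a finiteness input with a degree bound. Since $N\ge 3$, points of order dividing $N$ on $X$ come in pairs $\{(x_0,y_0),(x_0,-Q(x_0)-y_0)\}$, so $\sharp\,(X\cap\tilde{J}[N])=2\,\sharp\,\tilde{\cT}_N$ and it suffices to bound $\sharp\,\tilde{\cT}_N$. First I would make the torsion condition explicit. For $N=2g+1$ the space $L((2g+1)\infty)$ is spanned by $1,x,\dots,x^g,y$, so a point $P=(x_0,y_0)\in\tilde{J}[2g+1]$ (note $2g+1$ is odd, so $\tilde{J}[2g+1]=J[2g+1]\smallsetminus\{0\}$ and every such $P$ is affine) is characterised by the existence of $c\in k[x]$ with $\deg c\le g$ and $\operatorname{div}(y-c)=(2g+1)([P]-[\infty])$, i.e.\ $P-Qc-c^2=(x-x_0)^{2g+1}$; equivalently, writing $F=Q^2+4P$, the polynomial $F(x)-4(x-x_0)^{2g+1}$ of degree at most $2g$ is a perfect square in $k[x]$ (with a parallel formulation in characteristic $2$ obtained directly from $y^2+Qy=P$). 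For $N=2g+2$ one argues the same way with $L((2g+2)\infty)=\langle 1,\dots,x^{g+1},y\rangle$, a function of the form $y+c$ with $\deg c=g+1$ and contact exponent $2g+2$; here the genuine $2$-torsion, which corresponds to pure-polynomial functions (Weierstrass points), is excluded from $\tilde{J}[2g+2]$ and contributes nothing to $\tilde{\cT}_{2g+2}$.

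Next I would turn the perfect-square (contact) condition into an elimination problem: expanding it coefficient by coefficient and eliminating the coefficients of $c$ leaves a system of polynomial conditions on $x_0$ whose common zero set is exactly $\tilde{\cT}_N$. The essential structural input is finiteness: by Proposition~\ref{prop: generalbound} the set $X\cap\tilde{J}[N]$ is finite, so the contact condition cannot hold for all $x_0$, and therefore at least one of the elimination polynomials---equivalently, one defining equation of the locus of perfect squares pulled back along $x_0\mapsto F-4(x-x_0)^{2g+1}$---is not identically zero. Crucially this non-vanishing holds in every characteristic, which is precisely what allows the argument to survive the degeneration $\varDelta_N=0$ that disables Theorem~\ref{th:helltorsionbound} in small characteristic. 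Bounding the degree in $x_0$ of a suitable nonzero such polynomial by $4g^2$ for $N=2g+1$ (respectively $4g^2+2g$ for $N=2g+2$) then gives $\sharp\,\tilde{\cT}_N\le 4g^2$ (respectively $4g^2+2g$), and hence the stated bounds $8g^2$ and $8g^2+4g$.

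The hard part will be the uniform degree bound, hand in hand with uniform non-vanishing. In good characteristic one can use a low-degree obstruction and recover the far sharper $O(g)$ bounds already contained in Theorem~\ref{th:helltorsionbound}; the difficulty in small characteristic is exactly that these efficient obstructions may vanish identically (this is the $\varDelta_N=0$ phenomenon), forcing one to pass to a possibly higher-degree defining equation of the perfect-square locus in order to certify finiteness, and it is this worst-case equation that produces the weaker but characteristic-free degree $4g^2$. Thus the technical heart is a careful bookkeeping in the elimination, tracking how the contact exponent $2g+1$ (resp.\ $2g+2$) propagates through the coefficient equations, to show that among the defining equations of the perfect-square locus at least one of degree at most $4g^2$ in $x_0$ pulls back to a nonzero polynomial, uniformly in $p$. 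The non-vanishing itself is cheap, being a consequence of finiteness; pinning down the degree is where the real work lies. Throughout, characteristic $2$ is accommodated by systematically using the model $y^2+Qy=P$, and the two cases $N=2g+1$ and $N=2g+2$ run in parallel, the extra additive term reflecting the one-higher degree of $c$ together with the need to excise the genuine $2$-torsion.
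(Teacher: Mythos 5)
Your outline reproduces the paper's strategy in spirit (reduce to counting $x$-coordinates, express the order-$N$ condition as a contact condition via Riemann--Roch, derive polynomial obstructions in $x_0$, use finiteness to see that some obstruction is nonzero, and bound its degree), but it stops exactly where the proof has to happen. The decisive step --- exhibiting, \emph{in every characteristic}, a provably nonzero obstruction polynomial of degree at most $4g^2$ (resp.\ $4g^2+2g$) in $x_0$ --- is asserted rather than proved, and you say yourself that ``pinning down the degree is where the real work lies.'' Finiteness only tells you that \emph{some} equation cutting out the perfect-square locus pulls back to a nonzero polynomial; it gives no control on which one, and the naive elimination you propose (eliminating the $g+1$ coefficients of $c$ from the $2g+1$ coefficient equations of $F(x)-4(x-x_0)^{2g+1}=G(x)^2$, e.g.\ by resultants) produces polynomials whose degrees in $x_0$ are not obviously bounded by $4g^2$, and whose non-vanishing in bad characteristic is precisely the delicate point. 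The characteristic-$2$ variant is also only gestured at.

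The paper closes this gap with an explicit construction. For $N=2g+1$ or $2g+2$ one has $\mu=0$, and the contact condition at $(x_0,y_0)$ is equivalent (Theorems~\ref{th:mainth} and \ref{th:nscJN}) to $x_0$ being a common zero of the $g$ universal polynomials $s_{0,n}(x)$, $\nu\le n\le N-1$, which are the numerators of the Hasse--Schmidt derivatives $D_n(y)$; these have degree at most $2gn$ (Lemmas~\ref{lem:degbound} and \ref{lem:Cmn}), at least one of them is nonzero because $J[N]$ is finite and $k$ is infinite (Corollary~\ref{cor:maxrank}), and each $x_0\in\tilde{\cT}_N$ is a zero of $s_{0,n}$ of multiplicity at least $N-n$ (Theorem~\ref{th:genbound}). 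Taking the worst case $n=N-1$, whose bound dominates all the others, gives $\sharp\,\tilde{\cT}_N\le 2g(N-1)$ unconditionally, i.e.\ $4g^2$ and $4g^2+2g$; this is the specialization of Proposition~\ref{prop:badgenbound} to $N=2g+1,2g+2$. So your proposal is not wrong in outlook, but as written it defers the entire quantitative content; to complete it you would either have to carry out the elimination bookkeeping you describe (nontrivial, and needing a separate characteristic-$2$ treatment), or replace it by the explicit derivative polynomials as the paper does.
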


This is better than Proposition~\ref{prop: generalbound}, which gives a bound that is cubic in $g$. But it is weaker than Theorem~\ref{th:helltorsionbound} when it applies, which gives bounds that are linear in $g$.  The examples mentioned above show that the bound for $\sharp\, (X\cap \tilde{J}[2g+1])$ is attained by the curve $y^2+y=x^5$ when $p=2$ and $N=5$ and that for $\sharp\, (X\cap \tilde{J}[2g+2])$ is attained by the curve $y^2=x^5-x$ when $p=5$ and $N=6$.  See  Proposition~\ref{prop:badgenbound} for the general result when $2g+1$ and $2g+2$ are replaced by an arbitrary integer $N\geq 2g+1$.

\begin{rem:helltorsionbound}
\case{a} The example of $y^2+y=x^5$ when $p=2$ and $N=5$ also shows that the general bound $g(N-1)^2$ of Proposition~\ref{prop: generalbound}~\case{b} can fail, since it predicts $\sharp\, (X\cap J[5])\leq 32$. But since $J[5]=\tilde{J}[5]\cup \{\infty\}$,  $\sharp\, (X\cap J[5])= 33$.    This shows that it is necessary to take some account of inseparability in Proposition~\ref{prop: generalbound}. 

\case{b} When $g=2$, $Q(x)=0$  and $p\neq 2$, $\varDelta_5(x)$ is equal to the polynomial appearing in Proposition~1.2 of \cite{BoGrLe01} (with $P(x)$ equal to the polynomial written $f(x)$ there).  That proposition asserted that if $f(x_0)\neq 0$, then $x_0\in \tilde{\cT}_5$ if and only if $(x-x_0)^2$ divides $\varDelta_5(x)$. In fact, Theorem~\ref{th:helltorsionbound} may be viewed as a generalization of the {\lq\lq}only if{\rq\rq} part of that proposition. The examples above with vanishing $\varDelta_N(x)$ show that we cannot generalize the {\lq\lq}if{\rq\rq} part in such a simple way.  To do this, and to give a method of determining the sets $\tilde{\cT}_N$ in general, we shall need to construct other polynomials; $\tilde{U}_N(x)$ will then be (up to taking the radical and removing possible factors corresponding to points of order $2$) equal to the gcd of these polynomials. 

\case{c} Our proof of Theorem~\ref{th:helltorsionbound} also works when $g=1$, in which case $X=J$ is an elliptic curve with origin $\infty$. Thus $X\cap \tilde{J}[N]=\tilde{J}[N]$ and our bound is actually an equality when $N$ is prime to $p$. When $P(x)=x^3+ax+b$ and $Q(x)=0$, $\varDelta_N(x)$ is equal, up to sign, to the classical $N^{\text{th}}$ division polynomial attached to an elliptic curve (see for example \cite{Lang78}, Chapter~2). 
\end{rem:helltorsionbound} 

Here is a brief sketch of the strategy of proof of Theorem~\ref{th:helltorsionbound} and Proposition~\ref{prop:2g12}. We refer to \S~\ref{sec:hyperelliptic} for details. Write $\mu=\lfloor{\frac{N-2g-1}{2}}\rfloor$. To the hyperelliptic curve $X=X_{P,Q}$ above, we associate the polynomial $F(x)=F_{P,Q}(x)=4P(x)+Q(x)^2$. The hypothesis that $X$ is smooth implies that $F(x)\neq 0$.  Applying the Riemann-Roch theorem to $X$, we construct a polynomial matrix $M_{N}(x)=M_{P,Q,N}(x)$ with $\mu+1$ rows and $\mu+g$ columns and prove that it has maximal rank, and that if $x_0\in k$ satisfies $F(x_0)\neq 0$, then a necessary and sufficient condition for $x_0$ to belong to $\tilde{\cT}_{N}$ is that  the rank of $M_{N}(x_0)$ is not maximal. Thus $\tilde{U}_{P,Q,N}(x)$ is equal (possibly after removing factors common with $F(x)$ and taking the radical) to the gcd of the $(\mu+1)\times (\mu+1)$ subdeterminants of $M_{N}(x)$. In fact we shall see that all these subdeterminants are divisible by $F(x)^{\mu(\mu+1)/2}$ (see Proposition~\ref{prop:powfxdivSigma}). We prove Proposition~\ref{prop:2g12}, and its generalizations to all $N\geq 2g+1$ (see Theorem~\ref{th:genbound} and the results following it) by bounding the degrees of these subdeterminants. We use Hasse-Schmidt derivatives in order to obtain results in arbitrary characteristic. 

In fact, the matrix $M_N(x)$ and many of the other objects  that we define are {\lq\lq}universal{\rq\rq}, in the sense that they are defined for a universal hyperelliptic curve $\sy^2+\sQ(\sx)\sy=\sP(\sx)$ of genus $g$, where the coefficients  of $\sQ(\sx)$ and $\sP(\sx)$ are indeterminates.  Thus \S~\ref{sec:HSanduniv} is devoted to the study of these universal objects.

Of particular importance is the {\lq\lq}leftmost{\rq\rq} subdeterminant of $M_{P,Q,N}(x)$ (see just after Lemma~\ref{lem:leadcoeffmat} and the beginning of \S~\ref{sec:hyperelliptic}). We denote this subdeterminant  by $\varGamma_{P,Q,N}(x)$ or simply $\varGamma_N(x)$. Then $\varDelta_{P,Q,N}(x)$ is defined to be the quotient of $\varGamma_{N}(x)$ by $F(x)^{\mu(\mu+1)/2}$.  We prove (see Theorem~\ref{th:genbound}) that all the subdeterminants of $M_N(x)$ are divisible by certain powers of $\tilde{U}_N(x)$ and in particular that $\tilde{U}_N(x)^g$ divides $\Delta_N(x)$ (see \ref{subsec:bestworst}.) 

When the paper was nearing completion, the author realized that $\varDelta_N$ is equal, up to sign, to the polynomial denoted $P_{N-g+1}$ in Cantor's paper  \cite{Ca94}.  (See also \^Onishi~\cite{On05} for related work when $k=\bC$.)  This is independent of the results of the present paper, but we have included a proof in \ref{subsec:Cantor}.  Cantor proved that $x_0\in \tilde{\cT}_{N}$ if and only if $P_{N-g+1+r}(x_0)=0$ for all $r\in \{0,1,\dots, 2g-2\}$. We improve on this by proving that certain \emph{powers} of $\tilde{U}_{N}(x)$ divide these $P_{N-g+1+r}(x)$'s. Returning to our notation, the precise result is as follows. 

\begin{prop} \label{prop:divCantor} Let $N\geq 2g+1$ and let $r\in \{0,1,\dots ,2g-2\}$. Define 
\begin{equation*}
\ve_{r,g}=\left(g-\left\lfloor{\frac{r+1}{2}}\right\rfloor \right)\left(\left\lfloor{\frac{r}{2}}\right\rfloor+1    \right).
\end{equation*}
Then $\tilde{U}_{N}(x)^{\ve_{r,g}}$ divides $\varDelta_{N+r}(x)$. 
\end{prop}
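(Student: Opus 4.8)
The plan is to localize the divisibility at each point of $\tilde{\cT}_N$ and then to compute the resulting order of vanishing of the leftmost subdeterminant by a Brill--Noether excess count. Since $\tilde{U}_N(x)=\prod_{x_0\in\tilde{\cT}_N}(x-x_0)$ is squarefree, it is enough to fix $x_0\in\tilde{\cT}_N$ and show that $(x-x_0)^{\ve_{r,g}}\mid\varDelta_{N+r}(x)$. Every such $x_0$ has $F(x_0)\neq0$, and $\varDelta_{N+r}=\varGamma_{N+r}/F^{\mu'(\mu'+1)/2}$ with $\mu'=\lfloor(N+r-2g-1)/2\rfloor$, so dividing out $F$ leaves the order of vanishing at $x_0$ unchanged; thus I would reduce to proving $\mathrm{ord}_{x_0}\varGamma_{N+r}\geq\ve_{r,g}$.

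The heart of the argument is a dimension count. Put $P_0=(x_0,y_0)$ and $D=[P_0]-[\infty]$, so that $ND=0$. The proof of Theorem~\ref{th:genbound} reads $\mathrm{ord}_{x_0}\varGamma_{N+r}$ off the successive coranks of the leftmost block of $M_{N+r}(x)$, which are controlled by the jumps of $\dim L\big((N+r)\infty-m[P_0]\big)$ as $m$ increases. The new ingredient is that, writing $m=N+j$ and using $ND=0$,
\begin{equation*}
\dim L\big((N+r)\infty-m[P_0]\big)=\dim L\big(r[\infty]-j[P_0]\big).
\end{equation*}
The extra sections over a non-torsion point are produced by $ND=0$; for $j\geq0$ they are spanned by $h,xh,\dots,x^{\lfloor r/2\rfloor}h$ with $\mathrm{div}(h)=N[P_0]-N[\infty]$. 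Because $F(x_0)\neq0$, the point $P_0$ is not a Weierstrass point, so for the relevant range $|j|<N$ the right-hand side is computed from Riemann--Roch together with the gap sequence $\{1,3,\dots,2g-1\}$ at the Weierstrass point $\infty$ and $\{1,2,\dots,g\}$ at $P_0$. I would then subtract the corresponding non-torsion values $\max(0,N+r-m-g+1)$ and sum the excess over all $j$. A short computation with these gap sequences gives exactly $\ve_{r,g}$: for example when $r=0$ the excess equals $1$ for each of $j=0,-1,\dots,-(g-1)$ and $0$ otherwise, for a total of $g=\ve_{0,g}$, recovering Theorem~\ref{th:helltorsionbound}~\case{a}; I have checked several further cases (e.g.\ $g=3$, $r=2,3$) and the excess sum always reproduces $\ve_{r,g}$.

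It remains to convert this excess into a genuine vanishing order. Here I would run the Hasse--Schmidt set-up of \S~\ref{sec:hyperelliptic} exactly as in the case $r=0$: each unit of excess dimension forces the leftmost block of $M_{N+r}(x)$ to drop rank modulo a further power of $(x-x_0)$, so that $\mathrm{ord}_{x_0}\varGamma_{N+r}$ is at least the total excess, hence at least $\ve_{r,g}$. Multiplying over the finitely many $x_0\in\tilde{\cT}_N$ then yields $\tilde{U}_N(x)^{\ve_{r,g}}\mid\varDelta_{N+r}(x)$.

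The hard part will be this last step. Showing that it is the \emph{leftmost} minor $\varGamma_{N+r}$ --- and not merely the gcd of all maximal minors --- whose order of vanishing absorbs the full excess requires the explicit leading-term structure of $M_{N+r}(x)$ from Lemma~\ref{lem:leadcoeffmat} and Proposition~\ref{prop:powfxdivSigma}, since a designated minor can in principle vanish to lower (or higher) order than the generic one. Two lesser points also need care: the floor-function bookkeeping that turns the gap count into the closed form $\ve_{r,g}=(g-\lfloor(r+1)/2\rfloor)(\lfloor r/2\rfloor+1)$, and the degenerate case where $D$ has order a proper divisor $d\mid N$ with $d\leq 2g-2$; in that case some intermediate classes $jD$ also vanish, which only adds sections and so can only raise the order of vanishing, but this must be confirmed so that the bound $\ve_{r,g}$ is not undercut.
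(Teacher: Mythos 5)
Your reduction to showing $\mathrm{ord}_{x_0}\varGamma_{N+r}\geq\ve_{r,g}$ for each $x_0\in\tilde{\cT}_N$ is correct, and your identification of the source of the divisibility --- the $\lfloor r/2\rfloor+1$ extra sections obtained by multiplying a function $h$ with $\mathrm{div}(h)=N[P_0]-N[\infty]$ by low-degree polynomials in $x$ --- is exactly the mechanism the paper exploits (it uses $(x-x_0)^\ell\alpha$ for $\ell=0,\dots,q$ rather than $x^\ell h$, which serves the same purpose), and your excess count does reproduce $\ve_{r,g}$. But the step you defer as ``the hard part'' is not a technicality: it is the entire content of the proposition, and the framework you propose for it fails. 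For $r\geq 1$ and $x_0\in\tilde{\cT}_N$ not also lying in $\tilde{\cT}_{N+r}$, Theorem~\ref{th:nscJN} shows that $M_{N+r}(x_0)$ has \emph{maximal} rank (the paper remarks on this just before Lemma~\ref{lem:recMfN}). Hence the gcd of the maximal minors does not vanish at $x_0$, all successive coranks of the matrix in the elementary-divisor sense over $k[[x-x_0]]$ are zero, and no corank or Brill--Noether excess count attached to the rectangular matrix can force $\varGamma_{N+r}$ to vanish at $x_0$. The divisibility is a strictly non-generic property of the leftmost square block, arising because that block omits the last $g-1$ columns --- precisely the columns in which the relevant row combinations fail to vanish. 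Your Riemann--Roch dimensions $\dim L((N+r)\infty-m[P_0])$ are moreover tied to the kernel of the full matrix (conditions $D_n(\varPsi y)(\xi)=0$ for all $n$ up to $N+r-1$), not to the coranks of the leftmost block modulo powers of $(x-x_0)$, so the asserted control of the latter by the former is unsubstantiated.

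The paper supplies exactly the missing step. For each $\ell\in\{0,\dots,q\}$ with $q=\lfloor r/2\rfloor$ (plus one further value of $\ell$ when $N$ is even and $r$ odd), it replaces a row of the leftmost block by the combination whose $n$-th entry is $D_n((x-x_0)^\ell\varPsi(x)y)$ up to the unit $(2y+Q(x))^{1-2n}$; since $(x-x_0)^\ell\alpha$ vanishes to order $N+\ell$ at $P_0$ and the columns of the leftmost block only reach $n=N+r-g$, every entry of each modified row \emph{within that block} vanishes to a controlled positive order. It then expands the determinant and verifies, through the explicit block matrices $(*)$ and $(**)$ and a four-way case division on the parities of $N$ and $r$, that every term of the Leibniz expansion acquires total vanishing order at least $(g-q)(q+1)$, resp.\ $(g-q-1)(q+1)$, which equals $\ve_{r,g}$; the minimum is governed by the $(q+1)\times(q+1)$ block in the bottom right corner. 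This combinatorial core is absent from your sketch, so as written the proposal establishes the dimension count but not the divisibility.
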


Note the symmetry $\ve_{2g-2-r,g}=\ve_{r,g}$. It is easy to see that $\ve_{0,g}=g$, $\ve_{1,g}=g-1$ and $\ve_{r,g}\geq g-1$ for all $g$ and $r$. When $g\in \{1,2,3,4\}$ the values of $\ve_{r,g}$ are as follows:
\begin{equation*}
\begin{array}{c||c|c|c|c|c|c|c}
g\ r  &     0 & 1 & 2 &  3 & 4 & 5 & 6\\
\hline
1\  \phr&1 & {} & {} & {} & {} & {} & {}\\
2\  \phr&    2 & 1 & 2 &  {} & {} & {} & {}\\
3\  \phr&    3 & 2 & 4 & 2 &  3  & {} & {}\\
4\  \phr&    4 & 3 & 6 & 4 & 6   & 3 & 4 
\end{array}.
\end{equation*}

The proof will be given in \S~\ref{sec:Cantorprop}.  Note that, by the result of Zarhin mentioned above, whenever $\ell$ runs through a sequence of consecutive integers of length at most $2g-1$, the polynomials $\tilde{U}_{\ell}(x)$ are pairwise coprime. Thus, if $N\geq 2g+1$, the Proposition implies that $\varDelta_{N}(x)$ is divisible by a product of positive powers of $\tilde{U}_{\ell}(x)$ as $\ell$ runs over the integers between $N-2g+2$ and $N$ such that $\ell\geq 2g+1$.  When $\Delta_N(x)$ does not vanish, we can interpret this as a obtain a bound on a weighted sum of the cardinalities of the sets $X\cap \tilde{J}[\ell]$. 

We shall prove that, when $p=0$ or $p>N$, then $\varDelta_{N}(x)$ has degree $\delta_{N,g}$. In particular, it does not vanish.  As already noted, $\varDelta_{N}(x)$ \emph{can} vanish when $0<p\leq N$.  

\begin{rem:Cantor} \label{rem:Cantor}
Denote by $\Theta$ the theta divisor associated to $X$, i.~e. $\Theta=\{\xi_1+\xi_2+\cdots +\xi_{g-1}\mid (\xi_1,\xi_2,\dots, \xi_{g-1})\in X^{g-1}\}$ (recall that we are viewing $X$ as embedded in $J$ using $\infty$ as base point).  Cantor proved that the zeros of $P_r$ are the $x$-coordinates of the points $(x,y)\in X-\{\infty\}$ belonging to the support of the zero-cycle $X.r^*\Theta$ (counted with multiplicity).  This leads to a geometric interpretation of the degree of $P_r$: it is the equal to the degree of the zero-cycle $X_0.r^*\Theta$, where $X_0$ is the affine curve $X-\{\infty\}$. (It is well-known that the degree of $X.r^*\Theta$ is equal to $gr^2$; this follows for example from the computation at the end of the proof of Lemma~\ref{lem:CKRlemma}.)    

Since $X$ is irreducible of dimension one,  it follows that the vanishing of $P_r$ (or of $\Delta_{r+g-1}$) is equivalent to $r_*X\subseteq \Theta$. In our exemples where $\Delta_N$ vanishes (Exemples~\ref{ex:cantor} and \ref{ex:sixtorsion}), this inclusion can of course be checked directly.

Cantor also proved recurrence relations between the $P_r$, generalizing those when $g=1$. These do not seem to be easily accessible by our methods, which depend on the fact that $\xi\in X-\{\infty\}$ has order dividing $N$ if and only if there is a function on $X$ with divisor $N([\xi]-[\infty])$.
\end{rem:Cantor}

It is easy to compute $\tilde{U}_{N}(x)$ in simple cases and we provide several examples illustrating this in \S~\ref{sec:examples}.  Indeed, it was these and similar computations that led to the discovery of Proposition~\ref{prop:divCantor}. In that section, we also describe a family of curves $X$ for which there are infinitely many integers $N$ such that $\sharp\, (X\cap J[N])\geq N^2$.  Note that, by the Manin-Mumford conjecture (first proved by Raynaud \cite{Ra83}), this cannot happen in characteristic zero.  Results analogous to the Manin-Mumford conjecture have been proved in positive characteristic when $X$ does not come from a curve over a finite field by extension of scalars (see for example \cite{PiRo04}). In our examples, $k$ is an algebraic closure of a finite field and the curve $y^2+y=x^5$ already mentioned is a member of this family when $p=2$, or indeed whenever $p\equiv 2$ or $3\pmod{5}$. 

\begin{rem:endintro} 
(1) In \cite{BeZa19}, Bekker and Zarhin have made a detailed study of the case $N=2g+1$ for any $g$. As an example of their results, they prove that if $2g+1$ is a power of the characteristic of $k$, then $X\cap J[2g+1]^*$ contains at most two points. When it applies, this result is stronger than Theorem~\ref{th:helltorsionbound}. More generally, the point of view of \cite{BeZa19} is more arithmetic than ours.  In this paper we concentrate on bounds over algebraically closed fields. 

(2) In \cite{Par21}, Pareschi proves that when $g=2$ and $k=\bC$, we have $\sharp (X\cap J[N])\leq \frac{3}{2}N^2$ whatever the base point used for the embedding of $X$ in $J$ (see the section entitled \emph{Proof for $g=2$} at the end of \cite{Par21}).  This is stronger than Proposition~\ref{prop: generalbound}. But when the  base point is a Weierstrass point, it is weaker than Theorem~\ref{th:helltorsionbound}.
\end{rem:endintro} 

\subsection{Some notation and terminology}\label{subsub:notterm} In order to avoid as much as possible having to repeatedly separate proofs according as to whether $N$ is even or odd, we introduce the following notation. If $N$ is an integer and $N\geq 2g+1$, we write
\begin{equation*}
\mu=\mu_N=\Big\lfloor{\frac{N-2g-1}{2}}\Big\rfloor,     \qquad   \nu=\nu_N=\Big\lfloor{\frac{N}{2}}\Big\rfloor+1.
\end{equation*}
The significance of the quantities will become clearer after \case{a} of Lemma~\ref{lem:first}. The following trivial properties will be used repeatedly. 

\begin{lemma}\label{lem:not} We have $\nu+\mu=N-g$ and
\begin{equation*}
\nu-\mu=\begin{cases} g+1 &\text{if $N$ is odd}\\
g+2  &\text{if $N$ is even}. \end{cases}
\end{equation*}
\end{lemma}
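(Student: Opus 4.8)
The plan is to evaluate $\mu$ and $\nu$ explicitly by splitting on the parity of $N$, since the floor functions in their definitions resolve cleanly once the residue of $N$ modulo $2$ is fixed. Both claimed identities then follow by elementary arithmetic, with no geometric input whatsoever; the lemma is purely a matter of unwinding the definitions in \S~\ref{subsub:notterm}.

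First I would treat the odd case, writing $N=2m+1$. Then the numerator $N-2g-1=2(m-g)$ is even, so $\mu=\lfloor (N-2g-1)/2\rfloor=m-g$ with no correction; likewise $\lfloor N/2\rfloor=m$, whence $\nu=m+1$. Adding gives $\nu+\mu=2m+1-g=N-g$, and subtracting gives $\nu-\mu=g+1$, which matches the odd branch of the stated formula.

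Next I would treat the even case, writing $N=2m$. Now $N-2g-1=2(m-g)-1$ is odd, so the floor strictly decreases the naive value and $\mu=\lfloor(2(m-g)-1)/2\rfloor=m-g-1$; meanwhile $\lfloor N/2\rfloor=m$ again gives $\nu=m+1$. Hence $\nu+\mu=2m-g=N-g$, confirming the first identity with the \emph{same} value as in the odd case, while $\nu-\mu=g+2$, matching the even branch.

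The only point demanding the slightest care is the evaluation of $\mu$ when $N$ is even, where the quantity inside the floor is a half-integer and the floor shaves off one unit; this single one-unit shift is precisely the source of the parity-dependent discrepancy between $g+1$ and $g+2$ in the difference $\nu-\mu$. Since the sum $\nu+\mu=N-g$ comes out uniformly in both cases, it is really only this shift that one must track, and I expect no obstacle beyond keeping the bookkeeping straight.
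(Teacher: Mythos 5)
Your proof is correct, and the paper itself offers no proof at all (it labels the lemma a collection of ``trivial properties''), so your parity split with the explicit evaluations $\mu=m-g$ or $m-g-1$ and $\nu=m+1$ is exactly the intended routine verification. Nothing further is needed.
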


In \S\S~\ref{sec:HSanduniv}, \ref{sec:hyperelliptic} and the beginning of \S~\ref{sec:Cantorprop} we shall mostly be concerned with matrices such as $M_N(x)$ which have $g-1$ more columns than rows, $g\geq 1$ being the genus of the curve.  By a \emph{submatrix} of a such a matrix, we shall always understand a \emph{square} submatrix of the largest possible size, in other words obtained by the suppression of $g-1$ columns.  A \emph{subdeterminant} is the determinant of a submatrix as above. The \emph{leftmost} submatrix is that obtained by suppressing the $g-1$ columns on the right and the leftmost subdeterminant is the determinant of the leftmost submatrix. Similarly for the rightmost submatrix and subdeterminant. 

\section{Hasse-Schmidt derivatives and universal polynomials}\label{sec:HSanduniv}

\subsection{Hasse-Schmidt derivatives} Let $\Lambda$ be a commutative domain and let $D_1$ be the usual derivation on the Laurent polynomial ring $\Lambda[x,\frac{1}{x}]$, so that $D_1(x^n)=nx^{n-1}$ and $D_1$ vanishes on $\Lambda$.  Recall that the Hasse-Schmidt derivatives associated to  $D_1$ are the members of the unique sequence $D=(D_n)_{n\geq 0}$ of $\Lambda$-linear operators on $\Lambda[x]$ such that $D_0$ is the identity, $D_1$ is as above and $D_n(x^m)= \binom{m}{n}x^{m-n}$ for all $n\geq 1$ and for all $m$. They satisfy the Leibniz rule $D_n(uv)=\sum_{\ell =0}^nD_\ell(u)D_{n-\ell}(v)$ for all $u$, $v\in \Lambda[x]$.  Also, $D_mD_n=\binom{m+n}{n}D_{m+n}$ for all $m$, $n$.  Finally, $D_1^n=n! D_n$ for all $n$. 

\subsection{Universal polynomials} Fix an integer $g\geq 1$. As in the Introduction, let $\{p_i\}_{0\leq i\leq 2g}$ and $\{q_i\}_{0\leq i\leq g}$ be two sets of variables and let $\Lambda=\bZ[p_0,\dots, p_{2g},q_0,\dots q_g]$ be the polynomial ring. Define  $\Lambda_2=\Lambda\otimes \bZ[\frac{1}{2}]$ and $\Lambda_\bQ=\Lambda\otimes \bQ$, so we have inclusions $\Lambda\subseteq \Lambda_2\subseteq \Lambda_\bQ$. For every $n\geq 0$, we denote by $\sD_n$ the derivation $D_n$ associated to these rings. Set $\sP(\sx)=p_0+p_1\sx+\cdots  +p_{2g}\sx^{2g}+\sx^{2g+1}$, $\sQ(\sx)=q_0+q_1\sx+ \cdots   +q_g\sx^g$ and put $\mysf(\sx)=\sP(\sx)+\frac{1}{4}\sQ(\sx)^2$, so that $\sP$, $\sQ\in \Lambda[\sx]$ and $\mysf\in \Lambda_2[\sx]$. Consider the ring $\Lambda[\sx,\sy]$ where $\sy$ satisfies $\sy^2+\sQ(\sx)\sy=\sP(\sx)$ and write $\sz=\sy+\frac{1}{2}\sQ(\sx)$, an element of $\Lambda_2[\sx,\sy]$. Then $\sz^2=\mysf(\sx)$. Since $\Lambda$ has characteristic zero, $\sD_1$ extends uniquely to a derivation on $\Lambda[\sx,\sy]$. The same is true for $\sD$, and in fact $\sD_n=\frac{\sD_1^n}{n!}$ for all $n$.   Similarly, $\sD$ extends uniquely to $\Lambda_2[\sx,\sy]$ and $\Lambda_{\bQ}[\sx,\sy]$.  
To simplify notation, write $\sy_n=\sD_n\sy$ and $\sz_n=\sD_n\sz$ for all $n$.

\begin{lemma}\label{lem:degbound} Let $i$ be an integer, $i\geq 0$. 

\case{a} For all integers $n\geq 1$ there exist unique polynomials $\myss_{i,n}(\sx)$,  $\st_{i,n}(\sx)\in \Lambda[\sx]$ and $\su_{i,n}(\sx)\in \Lambda_2[x]$ such that 
\begin{equation*}
\sD_n(\sx^i\sy)=\frac{\myss_{i,n}(\sx)+\st_{i,n}(\sx)\sy}{(2\sz)^{2n-1}}  \quad \text{and}  \quad  \sD_n(\sx^i\sz)=\frac{\su_{i,n}(\sx)}{(2\sz)^{2n-1}}.
\end{equation*}

\case{b} If $n>i+g$, then $\sD_n(\sx^i\sy)=\sD_n(\sx^i\sz)$ and hence $\myss_{i,n}(\sx)=\su_{i,n}(\sx)$ and $\st_{i,n}(\sx)=0$. In particular, $\su_{i,n}(\sx)\in \Lambda$ whenever $n>i+g$.
\end{lemma}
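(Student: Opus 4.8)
The plan is to work throughout in $\Lambda_\bQ[\sx,\sy]$ and to exploit that it is a domain. Since $\sP$ is monic of degree $2g+1$, the polynomial $\mysf=\sP+\tfrac14\sQ^2$ is monic of odd degree $2g+1$, hence is not a square in the fraction field of $\Lambda_\bQ[\sx]$; therefore $\sy^2+\sQ\sy-\sP=(\sy+\tfrac12\sQ)^2-\mysf$ is irreducible over that field and $\{1,\sy\}$ (equivalently $\{1,\sz\}$) is a basis of $\Lambda_\bQ[\sx,\sy]$ over $\Lambda_\bQ[\sx]$. In particular $2\sz\neq0$. This disposes of \emph{uniqueness} at once: if two expressions $\frac{\myss+\st\sy}{(2\sz)^{2n-1}}$ coincide, then after multiplying by the nonzero element $(2\sz)^{2n-1}$ the numerators agree in $\Lambda_\bQ[\sx,\sy]$, and comparing coordinates against the basis $\{1,\sy\}$ forces the polynomials to match; similarly for $\su$. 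So the real content of \case{a} is \emph{existence} together with the integrality statements $\myss_{i,n},\st_{i,n}\in\Lambda[\sx]$ and $\su_{i,n}\in\Lambda_2[\sx]$.

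The key technical decision is to generate the needed recursions by applying $\sD_n$ to the \emph{defining quadratic relations} via the Leibniz rule, rather than through $\sD_n=\tfrac1n\sD_1\sD_{n-1}$, which would introduce spurious factors $\tfrac1n$ and hide integrality. Treating $i=0$ first: applying $\sD_n$ to $\sz^2=\mysf$ and isolating the top term gives $2\sz\,\sz_n=\sD_n\mysf-\sum_{\ell=1}^{n-1}\sz_\ell\sz_{n-\ell}$. If inductively $\sz_\ell=\su_{0,\ell}/(2\sz)^{2\ell-1}$ with $\su_{0,\ell}\in\Lambda_2[\sx]$ for $\ell<n$, then each product $\sz_\ell\sz_{n-\ell}$ carries denominator $(2\sz)^{2n-2}=(4\mysf)^{n-1}$, and clearing gives
\begin{equation*}
\su_{0,n}=(4\mysf)^{n-1}\,\sD_n\mysf-\sum_{\ell=1}^{n-1}\su_{0,\ell}\su_{0,n-\ell}\in\Lambda_2[\sx],
\end{equation*}
using $4\mysf=4\sP+\sQ^2\in\Lambda[\sx]$ and $\sD_n\mysf\in\Lambda_2[\sx]$. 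General $i$ follows from $\sD_n(\sx^i\sz)=\sum_{\ell\geq0}\binom{i}{\ell}\sx^{i-\ell}\sz_{n-\ell}$: raising each $\sz_{n-\ell}$ to the common denominator $(2\sz)^{2n-1}$ costs only a factor $(2\sz)^{2\ell}=(4\mysf)^{\ell}\in\Lambda[\sx]$, whence $\su_{i,n}\in\Lambda_2[\sx]$.

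The $\sy$ case runs in the same spirit, but now off the relation $\sy^2+\sQ\sy=\sP$, whose coefficients lie in $\Lambda$ rather than $\Lambda_2$; this is precisely what upgrades the integrality to $\myss,\st\in\Lambda[\sx]$. Applying $\sD_n$ and using $2\sy+\sQ=2\sz$ yields
\begin{equation*}
2\sz\,\sy_n=\sD_n\sP-\sum_{\ell=1}^{n-1}\sy_\ell\sy_{n-\ell}-\sum_{\ell=1}^{n}(\sD_\ell\sQ)\,\sy_{n-\ell},
\end{equation*}
whose $n=1$ instance reads $\myss_{0,1}=\sP'$, $\st_{0,1}=-\sQ'$. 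Inducting, one reduces every product using $\sy^2=\sP-\sQ\sy$ and clears to the common denominator $(2\sz)^{2n-1}$, the surplus odd powers being absorbed through $(2\sz)^{2\ell-1}=(4\mysf)^{\ell-1}(2\sy+\sQ)\in\Lambda[\sx]+\Lambda[\sx]\sy$; all numerators then remain in $\Lambda[\sx]+\Lambda[\sx]\sy$, and passing to general $i$ via $\sD_n(\sx^i\sy)=\sum_{\ell\geq0}\binom{i}{\ell}\sx^{i-\ell}\sy_{n-\ell}$ completes \case{a}. I expect the one genuinely fiddly point to be this denominator-and-parity bookkeeping; the principle to keep straight is that $4\mysf$ and $2\sz=2\sy+\sQ$ are integral while $\mysf$ and $\sz$ individually are not, which is exactly why $\su$ needs $\Lambda_2$ but $\myss,\st$ do not.

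Finally \case{b} is a short corollary. From $\sx^i\sz=\sx^i\sy+\tfrac12\sx^i\sQ$ and the fact that $\sx^i\sQ$ is a polynomial of degree at most $i+g$, while $\sD_n$ annihilates every polynomial of degree less than $n$, we get $\sD_n(\sx^i\sQ)=0$ as soon as $n>i+g$, hence $\sD_n(\sx^i\sz)=\sD_n(\sx^i\sy)$ in that range. Comparing the two representations from \case{a} and invoking the uniqueness established above gives $\st_{i,n}=0$ and $\myss_{i,n}=\su_{i,n}$; since $\myss_{i,n}\in\Lambda[\sx]$, this identity shows that $\su_{i,n}$ has all its coefficients in $\Lambda$, as asserted.
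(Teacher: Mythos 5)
Your proposal is correct and follows essentially the same route as the paper: existence by induction on $n$, applying $\sD_n$ via the Leibniz rule to the defining quadratic relation to isolate $2\sz\,\sy_n$ (resp.\ $2\sz\,\sz_n$), reducing products with $\sy^2=\sP-\sQ\sy$, and then passing to general $i$ through $\sD_n(\sx^i\sy)=\sum_\ell\binom{i}{\ell}\sx^{i-\ell}\sy_{n-\ell}$ at the cost of factors $(2\sz)^{2\ell}=\sF^\ell\in\Lambda[\sx]$; part \case{b} is handled identically. Your write-up is in fact a little more careful than the paper's on the two points it leaves implicit, namely uniqueness (via the basis $\{1,\sy\}$ over the domain $\Lambda_\bQ[\sx]$) and the precise $\Lambda$ versus $\Lambda_2$ bookkeeping.
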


\begin{proof} \case{a} The uniqueness of $\myss_{i,n}(\sx)$, $\st_{i,n}(\sx)$ and $\su_{i,n}(\sx)$ is clear.  We prove the existence of $\myss_{i,n}(\sx)$ and $\st_{i,n}(\sx)$ and leave the case of $\su_{i,n}(\sx)$ to the reader.  

Suppose $n=1$. From $\sy^2+\sQ(\sx)\sy=\sP(\sx)$, we find that $(2\sy+\sQ(\sx))\sy_1+(\sD_1\sQ(\sx))\sy=\sD_1\sP(\sx)$, so $y_1=\frac{\sD_1\sP(\sx)-(\sD_1\sQ(\sx))\sy}{2\sy+\sQ(\sx)}=\frac{\sD_1\sP(\sx)-(\sD_1\sQ(\sx))\sy}{2\sz}$, so $\myss_{0,1}(\sx)=\sD_1\sP(\sx)$ and $\st_{0,1}(\sx)=-\sD_1\sQ(\sx)$. These both lie in $\Lambda[x]$, so the result is true for $i=0$. If $i>0$, we have
\begin{align*}
\sD_1(\sx^i\sy)&=\sx^i\sD_1\sy+i\sx^{i-1}\sy=\frac{\sx^i(\sD_1\sP(\sx)-\sD_1\sQ(\sx)\sy)+i\sx^{i-1}2\sz\sy}{2\sz}\\
&=\frac{\sx^i(\sD_1\sP(\sx)-\sD_1\sQ(\sx)\sy)+i\sx^{i-1}(\sP(\sx)-2\sQ(\sx)\sy)}{2\sz},
\end{align*}
so $\myss_{i,1}(\sx)=\sx^i\sD_1\sP(\sx)+i\sx^{i-1}\sP(\sx)$ and $\st_{i,1}(\sx)=-\sx^i\sD_1\sQ(\sx)-2i\sx^{i-1}\sQ(\sx)$ both lie in $\Lambda[\sx]$.  

Next note that if $m\geq 2$ and the result is known for $(i,n)$ for $i=0$ and for all $n<m$, then we can deduce it for $i=0$ and $n=m$ as follows. From $\sy^2+\sQ(\sx)\sy=\sP(\sx)$ we deduce that $\sD_m\sP(\sx)=\sD_m((\sy+\sQ(\sx))\sy)=\sum_{k=0}^m\sD_k(\sy+\sQ(\sx))\sy_{m-k}$. The sum of the terms where $k=0$ and $k=m$ is $(2\sy+\sQ(\sx))\sy_m+(\sD_m\sQ(\sx))\sy=2\sz\sy_m+(\sD_m\sQ(\sx))\sy$ and if $1\leq k\leq m-1$ then by hypothesis $\sD_k(\sy+\sQ(\sx))\sy_{m-k}$ can be written as a product $\frac{\alpha}{(2\sz)^{2k-1}}\frac{\beta}{(2\sz)^{2(m-k)-1}}=\frac{\alpha\beta}{(2\sz)^{2m-2}}$ for some $\alpha$, $\beta\in \Lambda$.   Adding all these terms and rearranging proves the assertion.

Finally, if the result is known for all $(i,n)$ with $n<m$ and for $(0,m)$, it follows easily for all $i$ when $n=m$ using the identity $\sD_m(\sx^i\sy)=\sum_{k=0}^m\binom{i}{k}\sx^{i-k}\sy_{m-k}$. 

\case{b} Since $\sQ$ has degree $g$, $\sx^i\sQ(\sx)$ has degree $g+i$, so $\sD_n(\sx^i\sQ(\sx))=0$ whenever $n>g+i$.  Since $\sx^i\sz=\sx^i\sy+\frac{1}{2}\sx^i\sQ(\sx)$, this implies that $\sD_n(\sx^i\sy)=\sD_n(\sx^i\sz)$ as claimed. 
\end{proof}

Let $\{b_i\}_{i\geq 0}$ be further variables and let $B=\Lambda[b_0,b_1,\dots  ]$ be the polynomial ring. Then again,  $\sD$ has a unique extension to $B[\sx,\sy]$ vanishing on $B$, which we denote again by $\sD$. Let $\Psi(\sx)=b_0+b_1\sx+\cdots  +b_m\sx^m\in B[\sx]$. It follows from Lemma~\ref{lem:degbound} that for each $n\geq 0$ there is a unique polynomial $\sr_{\Psi,n}(\sx)$ such that $\sD_n(\Psi(\sx)\sz)=\frac{\sr_{\Psi,n}(\sx)}{(2\sz)^{2n-1}}$.  Here $\sr_{\Psi,0}(\sx)=\frac{1}{2}\Psi(\sx)$ lies in $B[\frac{1}{2},\sx]$ and $\sr_{\Psi,n}(\sx)\in B[\sx]$ if $n\geq 1$.  We want to compute the degree of $\sr_{\Psi,n}(\sx)$ as well as its leading coefficient. Since we are working in characteristic zero, we can use induction on the degree $n$ of the higher ordinary derivative $\sD_1^n=n!\sD_n$.  Write $\tilde{\sr}_{\Psi,n}=n!\sr_{\Psi,n}$, so by what has just been said, $\sD_1^n(\Psi(\sx)\sz)=\frac{\tilde{\sr}_{\Psi,n}(\sx)}{(2\sz)^{2n-1}}$.

For integers $m$, $n$ with $n\geq 0$ define $C_{m,n}=\prod_{r=0}^{n-1}(m-2r)$, so $C_{m,0}=1$ for all $m$. In our applications, $m$ will always be odd, so $C_{m,n}\neq 0$.

\begin{lemma}\label{lem:Cmn} \case{a} For all $n\geq 0$ we have
\begin{equation*}
\tilde{\sr}_{\Psi,n+1}(\sx)=2(2\sD_1\tilde{\sr}_{\Psi,n}(\sx)\mysf(\sx)+(1-2n)\tilde{\sr}_{\Psi,n}(\sx) \sD_1\mysf(x)).
\end{equation*}

\case{b} If $\Psi(\sx)=b_0+b_1\sx+\cdots +b_m\sx^m$ as above then $\tilde{\sr}_{\Psi,n}$ has degree $2gn+m$ and leading coefficient $2^{n-1}b_mC_{2g+2m+1,n}$. 
\end{lemma}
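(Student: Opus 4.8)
The plan is to prove both parts by the same mechanism: part \case{a} is a single application of $\sD_1$ to the defining identity, and part \case{b} is then an induction on $n$ whose engine is the recursion from \case{a}.

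For \case{a}, I would start from $\sD_1^n(\Psi(\sx)\sz)=\tilde{\sr}_{\Psi,n}(\sx)/(2\sz)^{2n-1}$ and differentiate once more with $\sD_1$. The only nonpolynomial ingredient is $\sz$, and since $\sz^2=\mysf(\sx)$ we have $2\sz\,\sD_1\sz=\sD_1\mysf(\sx)$, i.e. $\sD_1\sz=\sD_1\mysf/(2\sz)$. Applying $\sD_1$ to the quotient and substituting this, I would collect everything over the common denominator $(2\sz)^{2n+1}$: differentiating the numerator $\tilde{\sr}_{\Psi,n}$ contributes a factor $(2\sz)^2=4\mysf$, while differentiating $(2\sz)^{-(2n-1)}$ produces the term carrying $\sD_1\mysf$. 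Reading off the new numerator gives $\tilde{\sr}_{\Psi,n+1}=4\mysf\,\sD_1\tilde{\sr}_{\Psi,n}-2(2n-1)\tilde{\sr}_{\Psi,n}\,\sD_1\mysf$, which is the asserted formula once one rewrites $-(2n-1)=1-2n$. There is no real difficulty here; it is just the quotient rule in characteristic zero.

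For \case{b} I first record the two degree facts that drive the induction: $\mysf=\sP+\tfrac14\sQ^2$ is monic of degree $2g+1$ (since $\deg\sQ^2=2g<2g+1$), so $\sD_1\mysf$ has degree $2g$ with leading coefficient $2g+1$. The base case $n=0$ is immediate from $\tilde{\sr}_{\Psi,0}=\tfrac12\Psi$, of degree $m$ and leading coefficient $\tfrac12 b_m=2^{-1}b_m C_{2g+2m+1,0}$. For the inductive step I assume $\tilde{\sr}_{\Psi,n}$ has degree $2gn+m$ and leading coefficient $2^{n-1}b_m C_{2g+2m+1,n}$, and feed this into the recursion of \case{a}. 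Both summands $4\mysf\,\sD_1\tilde{\sr}_{\Psi,n}$ and $2(2n-1)\tilde{\sr}_{\Psi,n}\,\sD_1\mysf$ then have the same degree $(2gn+m-1)+(2g+1)=(2gn+m)+2g=2g(n+1)+m$, so the leading coefficient of $\tilde{\sr}_{\Psi,n+1}$ is the sum of their top coefficients.

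The one genuine computation — the step most likely to hide a slip — is checking that these two top coefficients combine into precisely the next factor of $C$. After factoring out the common $2^{n-1}b_m C_{2g+2m+1,n}$, the remaining bracketed integer is $4(2gn+m)-2(2n-1)(2g+1)$, and the crux is the identity $4(2gn+m)-2(2n-1)(2g+1)=2(2g+2m+1-2n)$. Since $C_{2g+2m+1,n+1}=C_{2g+2m+1,n}\,(2g+2m+1-2n)$, this produces leading coefficient $2^{n}b_m C_{2g+2m+1,n+1}$, exactly as claimed. Finally I must rule out a degree drop, i.e. cancellation of the top terms: the factor $2g+2m+1-2n$ is odd, hence nonzero, so $C_{2g+2m+1,n+1}\neq0$, and as we work in characteristic zero with $b_m$ a variable the whole leading coefficient is a nonzero element of $B$. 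This confirms the degree is exactly $2g(n+1)+m$ and closes the induction.
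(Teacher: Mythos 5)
Your proof is correct and follows essentially the same route as the paper's (which only sketches it): for \case{a}, apply $\sD_1$ once to the defining identity using $\sD_1\sz=\sD_1\mysf(\sx)/(2\sz)$, and for \case{b}, induct on $n$ via the recursion, tracking only the top-degree terms of $\mysf$ and $\Psi$. The identity $4(2gn+m)-2(2n-1)(2g+1)=2(2g+2m+1-2n)$ and the observation that this odd factor cannot vanish are precisely the details the paper leaves to the reader, and you have verified them correctly.
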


\begin{proof}
\case{a} Use the fact that $\sD_1^{n+1}(\Psi(\sx)\sz)=\sD_1(\sD_1^n(\Psi(\sx)\sz))$ together with $\sD_1\sz=\frac{\sD_1\mysf(\sx)}{2\mysf(\sx)}\sz$.  One proves \case{b} by an easy induction using \case{a} and observing that the terms of degree at most $2g$ in $\mysf$ and of degree at most $m-1$ in $\Psi$ do not impact the coefficient of degree $2gn+m$ in $\tilde{\sr}_{\Psi,n}$.  
\end{proof}

\subsection{The universal matrix $\sM_{\sP,\sQ,N}(\sx)$}\label{par:univmat} Fix an integer $N\geq 2g+1$ and write $\nu=\nu_N=\lfloor\frac{N}{2}\rfloor+1$,  $\mu=\mu_N=\lfloor\frac{N-2g-1}{2}\rfloor$. We define the universal matrix
\begin{equation*}
\sM_{N}(\sx)=\begin{pmatrix} \myss_{0,\nu}  & \myss_{0,\nu+1} & \cdots  & \myss_{0,N-1} \\ 
\myss_{1,\nu} & \myss_{1,\nu+1} & \cdots  & \myss_{1,N-1} \\
\vdots   &  \vdots       & \ddots  &    \vdots   \\
\myss_{\mu, \nu}& \myss_{\mu,\nu+1}  &  \cdots  & \myss_{\mu, N-1} \end{pmatrix}.
\end{equation*}
Thus $\sM_N(\sx)$ has $\mu+1$ rows and $\mu+g$ columns and, by Lemma~\ref{lem:degbound}, it has coefficients in $\Lambda[\sx]$.

Denote by $\cS_0$ the set $\{j=(j_1,j_2,\dots, j_{\mu+1})\in \bZ^{\mu+1}\mid \nu\leq j_1<j_2<\cdots < j_{\mu+1}\leq N-1\}$. For each $j\in \cS_0$ denote by $\Sigma_j(\sx)$ the submatrix of $\sM_{N}(x)$ formed by columns $j_r-\nu+1$, $r\in \{1,\dots, \mu+1\}$. In other words
\begin{equation*}
\Sigma_j(\sx)=\begin{pmatrix} \myss_{0,j_1} & \myss_{0,j_2}  &    \cdots  &  \myss_{0,j_{\mu+1}}\\
 \myss_{1,j_1} & \myss_{1,j_2}  &    \cdots  &  \myss_{1,j_{\mu+1}}\\
 \vdots     &  \vdots      &    \ddots  &   \vdots            \\
 \myss_{\mu,j_1} & \myss_{\mu,j_2}  &    \cdots  &  \myss_{\mu,j_{\mu+1}} \end{pmatrix}.
\end{equation*}

\begin{lemma} \label{lem:degSigma}
$\det{\Sigma_j(\sx)}$ is a polynomial of degree at most $2g\sum_{\ell=1}^{\mu+1}j_\ell+\frac{\mu(\mu+1)}{2}$. 
\end{lemma}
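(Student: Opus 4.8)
The plan is to reduce the degree bound on $\det\Sigma_j(\sx)$ to a degree bound on the individual entries $\myss_{i,n}(\sx)$, and then to establish the latter via the relation between $\myss_{i,n}$ and the universal polynomials $\su_{i,n}=\sr_{\sx^i,n}$ whose degrees are computed in Lemma~\ref{lem:Cmn}.

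First, expanding $\det\Sigma_j(\sx)$ by the Leibniz formula writes it as a signed sum of products $\prod_{r=1}^{\mu+1}\myss_{i_r,j_r}(\sx)$, one over each permutation, where the row indices $\{i_1,\dots,i_{\mu+1}\}$ run exactly through $\{0,1,\dots,\mu\}$ and the column indices are $j_1,\dots,j_{\mu+1}$. Since the degree of a product is at most the sum of the degrees, and the degree of a sum is at most the maximum of the degrees, it suffices to show that each entry satisfies
\[
\deg\myss_{i,n}(\sx)\le 2gn+i.
\]
Granting this, every product has degree at most $2g\sum_{r=1}^{\mu+1}j_r+\sum_{i=0}^{\mu}i=2g\sum_{\ell=1}^{\mu+1}j_\ell+\tfrac{\mu(\mu+1)}{2}$, which is precisely the asserted bound. (We only want upper bounds, so possible cancellation among leading terms is irrelevant.)

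It remains to prove the entry bound, and here I would exploit the substitution $\sz=\sy+\tfrac12\sQ(\sx)$. Applying $\sD_n$ to $\sx^i\sz=\sx^i\sy+\tfrac12\sx^i\sQ(\sx)$ and clearing the denominator $(2\sz)^{2n-1}=(4\mysf(\sx))^{n-1}(2\sy+\sQ(\sx))$ (using $\sz^2=\mysf$) gives
\[
\su_{i,n}=\myss_{i,n}+\st_{i,n}\sy+\tfrac12\,w_{i,n}(4\mysf)^{n-1}(2\sy+\sQ),
\]
where $w_{i,n}(\sx)=\sD_n(\sx^i\sQ(\sx))$ is a polynomial in $\sx$ alone of degree at most $i+g-n$ (and identically zero once $n>i+g$, by Lemma~\ref{lem:degbound}\,\case{b}). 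Comparing the $\sy$-free components on both sides — legitimate because $\{1,\sy\}$ is a basis of $\Lambda_2[\sx,\sy]$ over $\Lambda_2[\sx]$ — yields the clean identity
\[
\myss_{i,n}=\su_{i,n}-\tfrac12(4\mysf)^{n-1}\sQ\,w_{i,n}.
\]
By Lemma~\ref{lem:Cmn}\,\case{b} applied to $\Psi=\sx^i$ (so $m=i$), the polynomial $\su_{i,n}=\sr_{\sx^i,n}$ has degree exactly $2gn+i$; and since $\deg\mysf=2g+1$ and $\deg\sQ=g$, the correction term has degree at most $(2g+1)(n-1)+g+(i+g-n)=2gn+i-1$. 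Taking the maximum gives $\deg\myss_{i,n}\le 2gn+i$, as required.

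I expect the only real subtlety to be the bookkeeping in extracting the $\sy$-free part of the displayed identity and checking that the $\sQ$-correction is genuinely of strictly smaller degree; the determinant expansion itself is entirely routine. One should also keep in mind the degenerate range $n>i+g$, where $w_{i,n}=0$ and the identity collapses to $\myss_{i,n}=\su_{i,n}$, consistently with Lemma~\ref{lem:degbound}\,\case{b}.
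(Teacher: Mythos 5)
Your proof is correct and follows essentially the same route as the paper: bound each entry $\myss_{i,j_\ell}(\sx)$ by $2gj_\ell+i$ and sum these bounds over a permutation in the Leibniz expansion of the determinant. The only remark worth making is that every entry of $\Sigma_j(\sx)$ has $j_\ell\geq\nu>i+g$, so by Lemma~\ref{lem:degbound}~\case{b} your correction term $w_{i,n}$ vanishes identically and the entry bound is immediate from $\myss_{i,n}=\su_{i,n}$ and Lemma~\ref{lem:Cmn}~\case{b}; your more general identity for $\myss_{i,n}$ in the range $n\leq i+g$ is correct but not needed here.
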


\begin{proof}
The entry of $\Sigma_j(\sx)$ at the intersection of row $i$ and column $\ell$ is $\myss_{i,j_\ell}(\sx)$, which is a polynomial of degree at most $2gj_\ell+i$. Thus if $\pi$ is a permutation of $\{0,1,\dots, \mu\}$, then the product $\pm \prod_{i=0}^{\mu}\myss_{i,j_{\pi(i)+1}}(\sx)$ appearing in  the development of $\det{\Sigma_j(\sx)}$ has degree at most 
$\sum_{i=0}^{\mu}{(2gj_{\pi(i)+1}+i)}=2g\sum_{i=1}^{\mu+1}j_i+\frac{\mu(\mu+1)}{2}$, as required.
\end{proof}

Let $M$ be a matrix whose entries $M_{i,j}$ are polynomials in one variable $\sx$ of degree at most $d_{i,j}$. We define the \emph{leading coefficient matrix} associated to $M$ to be the matrix whose $(i,j)^{\text{th}}$ entry is the coefficient of $\sx^{d_{i,j}}$ in $M_{i,j}$.  This depends of course on the choice of $d_{i,j}$; here we apply it to the matrix $\sM_{N}(\sx)$ defined above with $d_{i,j}$ equal to $2g(\nu+j-1)+i$. Indeed, this is the bound on the degree of $\myss_{i,\nu+j-1}$ obtained by applying Lemma~\ref{lem:degbound}. Since $\sr_{\Psi,n}=\frac{1}{n!}\tilde{\sr}_{\Psi,n}$, we deduce the following from \case{b} of Lemma~\ref{lem:Cmn}. 

\begin{lemma}\label{lem:leadcoeffmat} The $(i,j)^{\text{th}}$ entry of the leading coefficient matrix associated to $\sM_{N}(\sx)$ is
\begin{equation*} 2^{\nu+j-2}\frac{C_{2(g+i)-1,\nu+j-1}}{(\nu+j-1) !}.
\end{equation*}
\end{lemma}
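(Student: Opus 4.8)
The plan is to deduce the lemma directly from the degree-and-leading-coefficient computation already carried out in Lemma~\ref{lem:Cmn}, once the entries of $\sM_N(\sx)$ are put in the right form. The $(i,j)^{\text{th}}$ entry is $\myss_{i,\nu+j-1}(\sx)$, and the first observation is that throughout the matrix this polynomial coincides with $\su_{i,\nu+j-1}(\sx)$. This matters because $\su_{i,n}(\sx)$ is exactly $\sr_{\Psi,n}(\sx)$ for the choice $\Psi(\sx)=\sx^i$, so that $m=i$ and $b_m=1$ in the notation preceding Lemma~\ref{lem:Cmn}. Once $\myss_{i,n}=\su_{i,n}$ is known, the computation of the coefficient of $\sx^{d_{i,j}}$ reduces to applying Lemma~\ref{lem:Cmn}\case{b} and then dividing by $(\nu+j-1)!$ via the identity $\sr_{\Psi,n}=\frac{1}{n!}\tilde{\sr}_{\Psi,n}$.

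The crux, and the only step needing genuine verification, is the reduction $\myss_{i,\nu+j-1}=\su_{i,\nu+j-1}$. By Lemma~\ref{lem:degbound}\case{b} this holds as soon as $n>i+g$ for $n=\nu+j-1$. Here $i$ ranges over $\{0,\dots,\mu\}$ and $j$ over $\{1,\dots,\mu+g\}$, so the tightest instance is $i=\mu$, $j=1$, which demands $\nu>\mu+g$. This is exactly what Lemma~\ref{lem:not} provides: since $\nu-\mu$ equals $g+1$ or $g+2$ according as $N$ is odd or even, we always have $\nu-\mu\geq g+1>g$, whence $\nu+j-1\geq\nu>\mu+g\geq i+g$ for every entry of the matrix. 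Consequently $\st_{i,\nu+j-1}=0$ and $\myss_{i,\nu+j-1}=\su_{i,\nu+j-1}$ uniformly across $\sM_N(\sx)$. I expect this uniform inequality to be the main thing to get right; everything after it is mechanical.

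It then remains to read off the leading coefficient. With $n=\nu+j-1$ and $\Psi(\sx)=\sx^i$, Lemma~\ref{lem:Cmn}\case{b} shows that $\tilde{\sr}_{\sx^i,n}(\sx)$ has degree $2gn+i$, which is precisely the bound $d_{i,j}=2g(\nu+j-1)+i$ used to form the leading-coefficient matrix, and it records the corresponding leading coefficient. Dividing by $n!$ yields the coefficient of $\sx^{d_{i,j}}$ in $\myss_{i,n}=\su_{i,n}$, which is the value displayed in the statement. Finally, since the first index of the relevant $C_{m,n}$ is odd, that factor is nonzero, so the degree of $\myss_{i,\nu+j-1}$ is exactly $d_{i,j}$ and the entry produced is a genuine (nonvanishing) leading coefficient; this justifies using $d_{i,j}$ as the degree bound in the definition of the leading-coefficient matrix.
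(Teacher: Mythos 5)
Your argument is correct and follows essentially the paper's own route: identify $\myss_{i,n}$ with $\su_{i,n}=\sr_{\sx^i,n}$ via Lemma~\ref{lem:degbound}~\case{b} (the uniform inequality $\nu>\mu+g$ from Lemma~\ref{lem:not} being exactly the point worth isolating, which the paper leaves implicit), then read off the degree and leading coefficient from Lemma~\ref{lem:Cmn}~\case{b} with $\Psi(\sx)=\sx^i$ and divide by $(\nu+j-1)!$. The only cosmetic mismatch is an indexing convention: the displayed formula $C_{2(g+i)-1,\nu+j-1}$ presumes rows labelled $i=1,\dots,\mu+1$ (row $i$ containing $\myss_{i-1,\,\cdot}$), whereas with your $0$-based $i$ the first index would read $2(g+i)+1$.
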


Let $\Gamma_{N}(\sx)$ be the determinant of the leftmost $(\mu+1)\times (\mu+1)$ submatrix of $\sM_{N}(\sx)$ (see \ref{subsub:notterm}), in other words $\Gamma_{N}(\sx)=\det{\Sigma_j(\sx)}$ where $j=(\nu,\nu+1,\dots, \nu+\mu)$.  

\begin{prop} \label{prop:valleadcoeffs}
The degree of $\Gamma_{N}(\sx)$ is $g((N-g)(N-g+1)-\nu(\nu-1))+\frac{\mu(\mu+1)}{2}$ and its leading coefficient is equal to
\begin{equation*}
2^{(\nu+\mu-1)(\mu+1)} \frac{\left(\prod_{i=1}^{\mu+1}{C_{2g+2i-1,\nu}}\right)\left(\prod_{j=1}^{\mu+1}(j-1)!\right)} {\prod_{j=1}^{\mu+1}(\nu+j-1)!}
\end{equation*}
\end{prop}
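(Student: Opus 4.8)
The plan is to derive both the degree and the leading-coefficient statements from one evaluation of the determinant of the leading coefficient matrix $L$ of the leftmost submatrix. By Lemma~\ref{lem:degSigma} applied to $j=(\nu,\nu+1,\dots,\nu+\mu)$, the degree of $\Gamma_N(\sx)$ is at most $D:=2g\sum_{\ell=1}^{\mu+1}(\nu+\ell-1)+\frac{\mu(\mu+1)}{2}$, and the proof of that lemma in fact shows that every term in the expansion of the determinant has degree bounded by the \emph{same} number $D$ (the degree bound of the $(i,j)$ entry splits as a column-only term $2g(\nu+j-1)$ plus a row-only term, so $\sum_i d_{i,\sigma(i)}$ is independent of the permutation $\sigma$). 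Consequently the coefficient of $\sx^{D}$ in $\Gamma_N(\sx)$ is precisely $\det L$, with $L$ the matrix of leading coefficients supplied by Lemma~\ref{lem:leadcoeffmat}. Using $\nu+\mu=N-g$ (Lemma~\ref{lem:not}) one verifies the elementary identity $D=g\big((N-g)(N-g+1)-\nu(\nu-1)\big)+\frac{\mu(\mu+1)}{2}$. Thus it suffices to evaluate $\det L$ and check that it is nonzero: this simultaneously shows $\deg\Gamma_N(\sx)=D$ and identifies the leading coefficient as $\det L$.

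To evaluate $\det L$, I would first factor the quantity $2^{\nu+j-2}/(\nu+j-1)!$ out of column $j$, reducing to the determinant of the matrix with $(i,j)$ entry $C_{2g+2i-1,\,\nu+j-1}$ (indices $1\le i,j\le \mu+1$). Writing $m_i=2g+2i-1$ and using the recursion $C_{m,n+1}=C_{m,n}(m-2n)$, immediate from $C_{m,n}=\prod_{r=0}^{n-1}(m-2r)$, we get $C_{m_i,\nu+j-1}=C_{m_i,\nu}\prod_{l=0}^{j-2}(m_i-2\nu-2l)$. Factoring $C_{m_i,\nu}$ out of row $i$ then produces $\prod_{i=1}^{\mu+1}C_{2g+2i-1,\nu}$ times $\det V$, where the $(i,j)$ entry of $V$ is the monic polynomial $\prod_{l=0}^{j-2}(m_i-2\nu-2l)$ of degree $j-1$ in $m_i$. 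Because these column polynomials are monic of degrees $0,1,\dots,\mu$, the base change to the monomials $m_i^{\,j-1}$ is unitriangular, so $\det V$ equals the Vandermonde determinant $\prod_{1\le i<i'\le\mu+1}(m_{i'}-m_i)$; since $m_{i'}-m_i=2(i'-i)$, this is $2^{\mu(\mu+1)/2}\prod_{j=1}^{\mu+1}(j-1)!$.

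Assembling the three pieces, the exponent of $2$ is $\sum_{j=1}^{\mu+1}(\nu+j-2)+\frac{\mu(\mu+1)}{2}$, which simplifies to $(\nu+\mu-1)(\mu+1)$, while the factorials combine into exactly $\big(\prod_{j=1}^{\mu+1}(j-1)!\big)/\prod_{j=1}^{\mu+1}(\nu+j-1)!$; together with the product of $C$'s this reproduces the stated formula. Nonvanishing is then clear, since each $C_{2g+2i-1,\nu}$ is a product of odd integers (hence nonzero) and the remaining factorials and power of $2$ are positive. The routine parts are the two algebraic identities for $D$ and for the exponent of $2$, which are mere bookkeeping; the one genuinely structural step, and the place I would be most careful, is the row factorization via the $C$-recursion that exhibits the residual determinant as a Vandermonde in the $m_i$, since it is this step that produces both the clean product $\prod_{i=1}^{\mu+1}C_{2g+2i-1,\nu}$ and the superfactorial $\prod_{j=1}^{\mu+1}(j-1)!$ in the answer.
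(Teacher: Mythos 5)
Your proposal is correct and follows essentially the same route as the paper: the same reduction of the leading coefficient to the determinant of the leading coefficient matrix of Lemma~\ref{lem:leadcoeffmat}, and the same row/column factorization via the identity $C_{m,\ell+\ell'}=C_{m,\ell}C_{m-2\ell,\ell'}$. The only (cosmetic) difference is in evaluating the residual determinant: you observe that its columns are monic polynomials of degrees $0,1,\dots,\mu$ in $m_i=2g+2i-1$ and reduce unitriangularly to a Vandermonde determinant, whereas the paper converts the entries to binomial coefficients $\binom{c+i}{j-1}$ and shows the resulting matrix $T_{\mu+1}$ has determinant $1$ by row operations — both yield the same factor $2^{\mu(\mu+1)/2}\prod_{j=1}^{\mu+1}(j-1)!$.
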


\begin{proof} The fact that the degree is at most $g((N-g)(N-g+1)-\nu(\nu-1))+\frac{\mu(\mu+1)}{2}$ follows from Lemma~\ref{lem:degSigma}, so we need to compute the coefficient of that degree and check that it is non-zero. The corresponding leading coefficient submatrix (which we denote by $\sM^c_{\sP,\sQ,N}$) can be read off from Lemma~\ref{lem:leadcoeffmat} and the desired coefficient is equal to its determinant.  To compute it, recall that if $c\in \bQ$ and if
\begin{equation*}
T_n=\begin{pmatrix} 1 & \binom{c}{1} & \binom{c}{2} & \cdots   &  \binom{c}{n-1} \\  
1 & \binom{c+1}{1} & \binom{c+1}{2} & \cdots   &   \binom{c+1}{n-1}   \\
\vdots &  \vdots  &  \vdots   &  \ddots     &    \vdots          \\
1 & \binom{c+n-1}{1} & \binom{c+n-1}{2} & \cdots   &  \binom{c+n-1}{n-1} 
\end{pmatrix},
\end{equation*}
then $\det{T_n}=1$.  This is clear for $n=1$. If $n> 1$ subtract row $n-i-1$ from row $n-i$ for $i=0$, $1$, \dots, $n-2$ and use the identity $\binom{u}{m}-\binom{u-1}{m}=\binom{u-1}{m-1}$ to show that $\det{T_n}=\det{T_{n-1}}$.   Using  the identity $C_{m,\ell+\ell'}=C_{m,\ell}C_{m-2\ell,\ell'}$, we see that  
\begin{align*}
2^{\nu+j-2}\frac{C_{2g+2i-1,\nu+j-1}}{(\nu+j-1)!}&=2^{\nu+j-2}\frac{C_{2g+2i-1,\nu}}{(\nu+j-1)!}C_{2g+2i-2\nu-1,j-1}\\
&=2^{\nu+j-2}\frac{C_{2g+2i-1,\nu}}{(\nu+j-1)!}2^{j-1}(j-1)!\binom{c+i}{j-1},
\end{align*}
where $c=g-\nu-\frac{1}{2}$ is independent of $i$ and $j$.  We deduce that $\sM^c_{N}$ is obtained from $T_{\mu+1}$ by multiplying row $i$ by $C_{2g+2i-1,\nu}$ for all $i$ and multiplying column $j$ by $\frac{2^{\nu+2(j-1)}(j-1)!}{(\nu+j-1)!}$ for all $j$. Finally the exponent of $2$ is $\sum_{j=1}^{\mu+1}(\nu+2j-3)=(\nu+\mu-1)(\mu+1)$ and the result follows. 
\end{proof}

\subsection{Powers of $\sF(\sx)$ dividing $\det{\Sigma_j(\sx)}$}  Write $\sF(\sx)=4\mysf(\sx)$, so that $\sF(\sx)\in \Lambda[\sx]$. The main result of this subsection is the following Proposition.

\begin{prop} \label{prop:powfxdivSigma}
For all $j\in \cS_0$, $\sF(\sx)^{\mu(\mu+1)/2}$ divides $\det{\Sigma_j(\sx)}$.
\end{prop}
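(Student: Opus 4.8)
The plan is to reduce the statement to a single algebraic identity expressing every row of $\Sigma_j(\sx)$ in terms of its top row, from which a triangular factorization of $\Sigma_j(\sx)$ immediately yields the claimed power of $\sF(\sx)$. The starting point is that the $\sy$-free parts $\myss_{i,n}(\sx)$ obey a clean recursion in the row index $i$.

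First I would expand $\sD_n(\sx^i\sy)$ by the Leibniz rule, exactly as in the proof of Lemma~\ref{lem:degbound}, to get $\sD_n(\sx^i\sy)=\sum_{k=0}^n\binom{i}{k}\sx^{i-k}\sy_{n-k}$ with $\sy_{n-k}=\sD_{n-k}\sy$. Substituting $\sy_{n-k}=(\myss_{0,n-k}(\sx)+\st_{0,n-k}(\sx)\sy)/(2\sz)^{2(n-k)-1}$ from Lemma~\ref{lem:degbound} and multiplying through by $(2\sz)^{2n-1}$ produces a factor $(2\sz)^{2n-1-(2(n-k)-1)}=(2\sz)^{2k}$ in the $k$-th term. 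Since $\sz^2=\mysf(\sx)$ and $\sF(\sx)=4\mysf(\sx)$, one has $(2\sz)^{2k}=(4\mysf(\sx))^k=\sF(\sx)^k$. Comparing the parts free of $\sy$ (legitimate because $1,\sy$ are $\Lambda_2[\sx]$-independent, this being the content of the uniqueness in Lemma~\ref{lem:degbound}) gives the key identity
\begin{equation*}
\myss_{i,n}(\sx)=\sum_{k=0}^{i}\binom{i}{k}\sx^{i-k}\sF(\sx)^k\,\myss_{0,n-k}(\sx),
\end{equation*}
where the sum truncates at $k=i$ because $\binom{i}{k}=0$ for $k>i$.

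Next I would read off a factorization $\Sigma_j(\sx)=L(\sx)\,A(\sx)$ of $(\mu+1)\times(\mu+1)$ matrices. Here $L(\sx)$ is the lower-triangular matrix with entries $L_{i,k}=\binom{i}{k}\sx^{i-k}\sF(\sx)^k$ for $0\le k\le i\le\mu$ (and $0$ otherwise), which is independent of $j$, while $A(\sx)$ has entries $A_{k,r}=\myss_{0,j_r-k}(\sx)$. The identity above says precisely that $(LA)_{i,r}=\sum_{k=0}^{\mu}\binom{i}{k}\sx^{i-k}\sF(\sx)^k\myss_{0,j_r-k}(\sx)=\myss_{i,j_r}(\sx)$, the $(i,r)$ entry of $\Sigma_j(\sx)$. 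Since $j_r\ge\nu$ and $k\le\mu$ with $\nu-\mu\ge g+1\ge 1$ by Lemma~\ref{lem:not}, every index $j_r-k$ is at least $g+1\ge 1$, so each entry of $A(\sx)$ is some $\myss_{0,m}(\sx)$ with $m\ge 1$ and hence lies in $\Lambda[\sx]$ by Lemma~\ref{lem:degbound}. Taking determinants yields $\det\Sigma_j(\sx)=\det L(\sx)\cdot\det A(\sx)$, and triangularity gives $\det L(\sx)=\prod_{i=0}^{\mu}\sF(\sx)^i=\sF(\sx)^{\mu(\mu+1)/2}$. As $\det A(\sx)\in\Lambda[\sx]$, this exhibits $\sF(\sx)^{\mu(\mu+1)/2}$ as a factor of $\det\Sigma_j(\sx)$ in $\Lambda[\sx]$, which is the assertion.

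I expect the only delicate point to be the bookkeeping that makes the factorization honest rather than merely formal: one must check that all indices $j_r-k$ appearing in $A(\sx)$ stay in the range where $\myss_{0,m}(\sx)\in\Lambda[\sx]$ (so that $\det A(\sx)$ is a genuine polynomial over $\Lambda$, not only over $\Lambda_2$ or $\Lambda_\bQ$), and that the passage from the identity in $\Lambda_2[\sx,\sy]$ to one among the $\sy$-free parts is valid. Both are handled by the inequality $\nu-\mu\ge g+1$ and by the uniqueness already established in Lemma~\ref{lem:degbound}; no new degree estimate or divisibility argument is required, and everything else is the routine evaluation of the determinant of a triangular matrix.
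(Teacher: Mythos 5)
Your proposal is correct and is essentially the paper's own proof: the same Leibniz-rule expansion of $\sD_n(\sx^i\sy)$ yields the same identity expressing $\myss_{i,n}$ as a combination of the $\myss_{0,n-k}$ with coefficients $\binom{i}{k}\sx^{i-k}\sF(\sx)^k$, and the paper then performs exactly your triangular factorization, phrased as unipotent row operations followed by extracting the factor $\sF(\sx)^i$ from row $i$ (your $L(\sx)A(\sx)$ with the diagonal powers of $\sF$ absorbed into the second factor rather than the first). Your matrix $A(\sx)$ is the paper's $\sS_j(\sx)$, so your argument also recovers Corollary~\ref{cor:pidet}.
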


Before giving the proof, we define the polynomials $\Pi_{j,N}(\sx)$ and, in particular, the polynomial $\Delta_{N}(\sx)$ mentioned just before Theorem~\ref{th:helltorsionbound}.

\begin{defnPiDelta} \label{defPiDelta} We define $\Pi_{j,N}(\sx)=\det{(\Sigma_j(\sx))}/\sF(\sx)^{\mu(\mu+1)/2}$ and 
\begin{equation*}
\Delta_{\sP,\sQ,N}(\sx)=\Gamma_{N}(\sx)/\sF(\sx)^{\mu(\mu+1)/2}.
\end{equation*}
\end{defnPiDelta}
As before, we abbreviate $\Delta_{\sP,\sQ,N}(\sx)$ to $\Delta_N(\sx)$.

Since $\sF(\sx)$ has degree $2g+1$, $\sF(\sx)^{\mu(\mu+1)/2}$ has degree $(2g+1)\mu(\mu+1)/2$. Also, the leading coefficient of $\sF(\sx)$ is $4$, so that of $\sF(\sx)^{\mu(\mu+1)/2}$ is $2^{\mu(\mu+1)}$. Recalling that $\nu=\lfloor\frac{N}{2}\rfloor+1$ and  $\mu=\lfloor\frac{N-2g-1}{2}\rfloor$, we see that the following corollaries follow from Propositions~\ref{prop:valleadcoeffs} and \ref{prop:powfxdivSigma} together with Lemma~\ref{lem:degSigma}. 

\begin{cor} \label{cor:degDelta}
Suppose $N\geq 2g+1$ and let $\delta_{N,g}$ be as in the statement of Theorem~\ref{th:helltorsionbound}. Then $\deg{(\Delta_{N}(\sx))}=\delta_{N,g}$ and its leading coefficient is equal to 
\begin{equation*}
2^{(\nu-1)(\mu+1)} \frac{\left(\prod_{i=1}^{\mu+1}{C_{2g+2i-1,\nu}}\right)\left(\prod_{j=1}^{\mu+1}(j-1)!\right)} {\prod_{j=1}^{\mu+1}(\nu+j-1)!}
\end{equation*}
\end{cor}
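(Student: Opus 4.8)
The plan is to read off both the degree and the leading coefficient of $\Delta_{N}(\sx)$ from the corresponding data for $\Gamma_{N}(\sx)$ by performing the exact division by $\sF(\sx)^{\mu(\mu+1)/2}$. By Proposition~\ref{prop:powfxdivSigma} (applied to the leftmost $j=(\nu,\nu+1,\dots,\nu+\mu)$) this division is exact, so $\Delta_{N}(\sx)$ is a genuine polynomial whose degree is the difference of the two degrees and whose leading coefficient is the quotient of the two leading coefficients. First I would record that $\sF(\sx)=4\mysf(\sx)$ has degree $2g+1$ and leading coefficient $4$, hence $\sF(\sx)^{\mu(\mu+1)/2}$ has degree $(2g+1)\mu(\mu+1)/2$ and leading coefficient $2^{\mu(\mu+1)}$. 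Proposition~\ref{prop:valleadcoeffs} supplies $\deg\Gamma_{N}(\sx)$ together with a leading coefficient that is genuinely nonzero, the point being that each factor $C_{2g+2i-1,\nu}$ has odd first index and so is nonzero, as noted before Lemma~\ref{lem:Cmn}; this confirms that the stated degree of $\Gamma_{N}(\sx)$ is exact and that the division behaves as claimed.

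For the leading coefficient the computation is then immediate: the exponent of $2$ becomes $(\nu+\mu-1)(\mu+1)-\mu(\mu+1)=(\nu-1)(\mu+1)$, while the remaining product of $C$'s and factorials is carried through unchanged, reproducing the asserted formula. For the degree I would write
\[
\deg\Delta_{N}(\sx)=g\big((N-g)(N-g+1)-\nu(\nu-1)\big)+\tfrac{\mu(\mu+1)}{2}-(2g+1)\tfrac{\mu(\mu+1)}{2},
\]
simplify the last two terms to $-g\mu(\mu+1)$, and so arrive at $\deg\Delta_{N}(\sx)=g\big[(N-g)(N-g+1)-\nu(\nu-1)-\mu(\mu+1)\big]$. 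It then remains to check that the bracket equals $\delta_{N,g}/g$.

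To verify this last identity uniformly I would introduce $s=\nu+\mu$ and $d=\nu-\mu$, so that $\nu=(s+d)/2$ and $\mu=(s-d)/2$, and invoke Lemma~\ref{lem:not}, which gives $s=N-g$ and $d=g+1$ or $d=g+2$ according as $N$ is odd or even. A short substitution yields
\[
\nu(\nu-1)+\mu(\mu+1)=\tfrac{s^{2}+d^{2}}{2}-d,
\]
whence the bracket becomes $s(s+1)-\tfrac{s^{2}+d^{2}}{2}+d=(s+d)\cdot\tfrac{s-d+2}{2}$. Substituting the two values of $d$ then gives $(N+1)(N-2g+1)/2$ in the odd case and $(N+2)(N-2g)/2$ in the even case, which is exactly $\delta_{N,g}/g$. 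The whole argument is quotation and substitution from results already established, so there is no real obstacle beyond bookkeeping; the only place calling for a little care is the parity split in the final algebraic identity, and the change of variables $s=N-g$, $d=\nu-\mu$ is designed precisely so that the even/odd distinction enters only at the very last step through the value of $d$.
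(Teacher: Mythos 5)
Your proposal is correct and follows essentially the same route as the paper: divide $\Gamma_{N}(\sx)$ by $\sF(\sx)^{\mu(\mu+1)/2}$ (exact by Proposition~\ref{prop:powfxdivSigma}), subtract degrees and divide leading coefficients using Proposition~\ref{prop:valleadcoeffs} and the fact that $\sF$ has degree $2g+1$ and leading coefficient $4$, noting that the $C_{2g+2i-1,\nu}$ are nonzero since their first index is odd. Your substitution $s=\nu+\mu=N-g$, $d=\nu-\mu$ via Lemma~\ref{lem:not} is just an explicit and clean way of carrying out the parity bookkeeping the paper leaves to the reader, and all the algebra checks out.
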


\begin{cor} \label{cor:degPij}
For all $j\in \cS_0$, the degree of $\Pi_{j,N}(\sx)$ is at most equal to $2g\sum_{r=1}^{\mu+1}j_r-g\mu(\mu+1)$. 
\end{cor}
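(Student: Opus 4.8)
The plan is to read the bound off directly from the two preceding results: the degree estimate of Lemma~\ref{lem:degSigma} for the numerator $\det\Sigma_j(\sx)$, together with the divisibility of Proposition~\ref{prop:powfxdivSigma}, which guarantees that the quotient $\Pi_{j,N}(\sx)$ is an honest polynomial. Once both are in hand, the corollary reduces to subtracting degrees, since degree is additive under multiplication in the domain $\Lambda[\sx]$.

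First I would record that, by Proposition~\ref{prop:powfxdivSigma}, $\sF(\sx)^{\mu(\mu+1)/2}$ divides $\det\Sigma_j(\sx)$, so that $\Pi_{j,N}(\sx)\in\Lambda[\sx]$ and $\det\Sigma_j(\sx)=\Pi_{j,N}(\sx)\,\sF(\sx)^{\mu(\mu+1)/2}$. Since $\sF(\sx)=4\mysf(\sx)$ has degree exactly $2g+1$ (its leading coefficient being $4$, a nonzero element of the domain $\Lambda$), the factor $\sF(\sx)^{\mu(\mu+1)/2}$ has degree exactly $(2g+1)\mu(\mu+1)/2$. Because $\Lambda[\sx]$ is a domain, degrees add across the product, and therefore
\[
\deg\bigl(\Pi_{j,N}(\sx)\bigr)=\deg\bigl(\det\Sigma_j(\sx)\bigr)-(2g+1)\frac{\mu(\mu+1)}{2},
\]
with the convention that the zero polynomial has degree $-\infty$, so that the degenerate case $\det\Sigma_j(\sx)=0$ is harmless.

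Next I would feed in Lemma~\ref{lem:degSigma}, which bounds $\deg(\det\Sigma_j(\sx))$ above by $2g\sum_{r=1}^{\mu+1}j_r+\frac{\mu(\mu+1)}{2}$. Substituting this into the displayed identity and simplifying,
\[
\deg\bigl(\Pi_{j,N}(\sx)\bigr)\le 2g\sum_{r=1}^{\mu+1}j_r+\frac{\mu(\mu+1)}{2}-(2g+1)\frac{\mu(\mu+1)}{2}=2g\sum_{r=1}^{\mu+1}j_r-g\mu(\mu+1),
\]
which is exactly the claimed bound.

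There is no genuine obstacle here: all the substantive work has already been carried out in Proposition~\ref{prop:powfxdivSigma}, which supplies the divisibility, and in Lemma~\ref{lem:degSigma}, which supplies the degree estimate for the numerator. The only point deserving a moment's care is that Lemma~\ref{lem:degSigma} furnishes merely an \emph{upper} bound on $\deg(\det\Sigma_j(\sx))$, whereas the degree of the denominator $\sF(\sx)^{\mu(\mu+1)/2}$ is known \emph{exactly}; since the subtracted quantity is exact, subtracting it from an upper bound on the numerator still produces a valid upper bound on the degree of the quotient.
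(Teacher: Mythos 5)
Your proof is correct and matches the paper's own (implicit) argument: the paper derives this corollary exactly as you do, by combining the degree bound of Lemma~\ref{lem:degSigma} for $\det\Sigma_j(\sx)$ with the exact degree $(2g+1)\mu(\mu+1)/2$ of the divisor $\sF(\sx)^{\mu(\mu+1)/2}$ supplied by Proposition~\ref{prop:powfxdivSigma}. Your added remarks on additivity of degree in the domain $\Lambda[\sx]$ and on the zero-polynomial convention are sound but not needed beyond what the paper states.
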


\begin{proof}[Proof of Proposition~\ref{prop:powfxdivSigma}]  In what follows, $i$ is an integer such that $0\leq i\leq \mu$. Furthermore, all the indices $m$ that appear satisfy $m\geq \nu$, and hence $m>i+g$, since $i\leq \mu<\nu\leq m$ and $\nu-\mu\in \{g+1,g+2\}$. Now from
\begin{equation*}
\sD_m(\sx^i\sy)=\sum_{k=0}^m\sD_{m-k}\sx^{i}\sD_k\sy=\sum_{k=m-i}^m\binom{i}{m-k}\sx^{i-k+m}\sy_k
\end{equation*}
we deduce
\begin{equation*}
\myss_{i,m}(\sx)=\sum_{k=m-i+1}^m\binom{i}{m-k}\sx^{i-k+m}(\sF(\sx))^{m-k}\myss_{k}(\sx)+(\sF(\sx))^{i}\myss_{m-i}(\sx),
\end{equation*}
whence
\begin{equation} \label{eq:myss}
\myss_{i,m}(\sx)=\sum_{\ell=1}^{i} \binom{i}{\ell}\sx^{\ell}(\sF(\sx))^{i-\ell}\myss_{m-i+\ell}(x)+ (\sF(\sx))^{i}s_{m-i}(\sx),
\end{equation}
where we abbreviate $\myss_{0,k}$ to $\myss_k$.  Note that the coefficients $\binom{i}{\ell}x^{\ell}(\sF(\sx))^{i-\ell}$ of $\myss_{m-i+\ell}(\sx)$ and $\sF(\sx)^i$ of $\myss_{m-i}(\sx)$ are independent of $m$.  Rewriting $\Sigma_j(x)$ using (\ref{eq:myss}) and applying suitable row operations, this implies that $\det{\Sigma_j(\sx)}$ is equal to the determinant of the matrix
 \begin{equation*}
 \begin{pmatrix}
 \myss_{j_1}(\sx) & \myss_{j_2}(\sx)  &  \cdots     &   \myss_{j_{\mu+1}}(\sx) \\
 \sF(\sx)\myss_{j_1-1}(\sx) & \sF(x)\myss_{j_2-1}(\sx)  &  \cdots     &   \sF(\sx)\myss_{j_{\mu+1}-1}(\sx) \\
 \vdots      &  \vdots        & \ddots     &    \vdots    \\
 \sF(\sx)^\mu \myss_{j_1-\mu}(\sx) & \sF(\sx)^\mu \myss_{j_2-\mu}(\sx)  &  \cdots     &   \sF(\sx)^\mu \myss_{j_{\mu+1}-\mu}(\sx) 
 \end{pmatrix}.
 \end{equation*}
Since $\myss_m(\sx)\in \Lambda[\sx]$,  $\sF(\sx)\in \Lambda[\sx]$, and the rows are divisible successively by $1$, $\sF(\sx)$, $\sF(\sx)^2$, \dots, $\sF(\sx)^\mu$, we deduce that this determinant is divisible by $\sF(\sx)^{1+2+\cdots +\mu}=\sF(\sx)^{\mu(\mu+1)/2}$ as required.
\end{proof}

The proof gives the following explicit formula for $\Pi_{j,N}(x)$. 

\begin{cor}  \label{cor:pidet} Let $j\in \cS_0$ and write
\begin{equation*}
\sS_j(\sx)=\begin{pmatrix} \myss_{j_1}(\sx) &  \myss_{j_2}(\sx)  & \cdots   &   \myss_{j_{\mu+1}}(\sx) \\
\myss_{j_1-1}(\sx) &  \myss_{j_2-1}(\sx)  & \cdots   &   \myss_{j_{\mu+1}-1}(\sx) \\
\vdots      &  \vdots        & \ddots     &    \vdots    \\
\myss_{j_1-\mu}(\sx) & \myss_{j_2-\mu}(\sx)  &  \cdots     &   \myss_{j_{\mu+1}-\mu}(\sx) \end{pmatrix}.
\end{equation*}
Then $\Pi_{j,N}(\sx)=\det{\sS_j(\sx)}$. 
\end{cor}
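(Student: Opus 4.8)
The plan is to read this off directly from the proof of Proposition~\ref{prop:powfxdivSigma}, upgrading the divisibility statement proved there into an exact equality. The engine is identity (\ref{eq:myss}), which expresses the row-$i$ entry $\myss_{i,j_r}(\sx)$ of $\Sigma_j(\sx)$ as an $\sF(\sx)$-weighted combination of the quantities $\myss_{j_r-i}(\sx),\myss_{j_r-i+1}(\sx),\dots$ that constitute the rows of $\sS_j(\sx)$, with coefficients $\binom{i}{\ell}\sx^\ell\sF(\sx)^{i-\ell}$ and $\sF(\sx)^i$ that, crucially, do not depend on the column index $r$. This column-independence is exactly what makes (\ref{eq:myss}) a relation between \emph{rows} of matrices rather than merely between individual entries.

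First I would make the row operations of that proof fully explicit. Write $\rho_i=(\myss_{j_1-i}(\sx),\dots,\myss_{j_{\mu+1}-i}(\sx))$ for the $(i+1)$-st row of $\sS_j(\sx)$, where $i=0,1,\dots,\mu$. Reindexing (\ref{eq:myss}) with $m=j_r$ shows that the $(i+1)$-st row of $\Sigma_j(\sx)$ equals $\sF(\sx)^i\rho_i+\sum_{\ell=1}^i\binom{i}{\ell}\sx^\ell\sF(\sx)^{i-\ell}\rho_{i-\ell}$. Next I would interpret this as a change of basis on the rows. Letting $M'(\sx)$ denote the intermediate matrix displayed in the proof of Proposition~\ref{prop:powfxdivSigma}, whose $(i+1)$-st row is $\sF(\sx)^i\rho_i$, the displayed relation says that row $i+1$ of $\Sigma_j(\sx)$ is row $i+1$ of $M'(\sx)$ plus a polynomial combination of the strictly earlier rows $\sF(\sx)^{i-\ell}\rho_{i-\ell}$ of $M'(\sx)$ (with $\ell\geq 1$, so $i-\ell<i$). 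Hence $\Sigma_j(\sx)$ is obtained from $M'(\sx)$ by a unipotent lower-triangular sequence of row operations, giving $\det\Sigma_j(\sx)=\det M'(\sx)$.

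To finish, I would factor $\sF(\sx)^i$ out of row $i+1$ of $M'(\sx)$ for each $i$, which by multilinearity of the determinant yields $\det M'(\sx)=\sF(\sx)^{0+1+\cdots+\mu}\det\sS_j(\sx)=\sF(\sx)^{\mu(\mu+1)/2}\det\sS_j(\sx)$. Dividing by $\sF(\sx)^{\mu(\mu+1)/2}$ and recalling the definition $\Pi_{j,N}(\sx)=\det(\Sigma_j(\sx))/\sF(\sx)^{\mu(\mu+1)/2}$ then gives $\Pi_{j,N}(\sx)=\det\sS_j(\sx)$.

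There is no real obstacle beyond careful bookkeeping. The one point I would verify with attention is the reindexing of (\ref{eq:myss}): I must check that the terms $\myss_{j_r-i+\ell}(\sx)$ correspond exactly to the rows $\rho_{i-\ell}$ of $\sS_j(\sx)$ as $i-\ell$ runs over $0,\dots,i-1$, and that the operations genuinely act on $\Sigma_j(\sx)$ as a matrix, so that the column-independence of the coefficients is what legitimizes treating them as row operations. Everything else is the determinant manipulation already invoked in Proposition~\ref{prop:powfxdivSigma}.
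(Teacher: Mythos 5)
Your proof is correct and is essentially the paper's own argument: the Corollary is stated in the paper as an immediate consequence of the proof of Proposition~\ref{prop:powfxdivSigma}, and what you have done is simply make explicit the unipotent row operations and the factorization of $\sF(\sx)^i$ from each row that the paper leaves implicit. The bookkeeping (the reindexing of (\ref{eq:myss}) and the column-independence of the coefficients) checks out.
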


\subsection{Relation between $\Delta_N$ and Cantor's $P_{N-g+1}$}\label{subsec:Cantor}  We first recall Cantor's definition of the polynomials he denotes by $P_r$ when $r\geq g$. We use notation close to that of \cite{Ca94}, indicating the modifications made to avoid confusion with the notation of the present paper.   Write $\mysf(\sx)=f_0+f_1\sx+\cdots +f_{2g}\sx^{2g}+\sx^{2g+1}$ where we think of the coefficients $f_i$ as independent variables. Let $\zeta$ be another variable (which Cantor denotes by $z$) and write $E(\zeta)=\mysf(\sx-\zeta)$, so $E(\zeta)=\sum_{i=0}^{2g+1}e_i\zeta^i$ with $e_i\in \bQ[f_0,f_1,\cdots f_{2g}][\sx]$. Note that $e_0=\mysf(\sx)$  and $e_{2g+1}=-1$. Let $\sz^2=\mysf(\sx)$ (this is our $\sz$ that corresponds Cantor's $y$) and $E_1(\zeta)=(E(\zeta)-\sz^2)/\zeta\in \Lambda[\sx,\zeta]$ and define a square root $S(\zeta)$ of $E(\zeta)$ by
\begin{equation*}
S(\zeta)=(-1)^{g+1}\sz\sqrt{1+\zeta\frac{E_1(\zeta)}{\sz^2}}.
\end{equation*}
Here $\sqrt{1+Z}=1+\frac{1}{2}Z+\cdots+ \binom{1/2}{n}Z^n+\cdots $ is the usual binomial expansion.  Let $\sigma_n$ be the coefficient of $\zeta^n$ in $S(\zeta)$ (Cantor writes it as $s_n$).  Writing $\myss_n$ for our $\myss_{0,n}$ as before, we see that $\sigma_n=(-1)^n\sD_nS(0)=(-1)^{n+g+1}\frac{\myss_n}{(2\sz)^{2n-1}}$.  Alternatively, writing $\sz_n=\sD_n\sz$, we have $\sigma_n=(-1)^{n+g+1}\sz_n$. 

For $r\geq g$ (we only need $r\geq g+2$), Cantor writes $m_r=\lfloor{\frac{r+g}{2}}\rfloor$ and $n_r=\lfloor{\frac{r-g-1}{2}}\rfloor$. We take $r=N-g+1$. Then $m_{r+1}=\nu$ and $n_{r+1}=\mu+1$, and Cantor defines the Hankel matrix (see his equation (3.1))
\begin{equation*}
H_{\nu,\mu+1}=\begin{pmatrix}  \sigma_{\nu-\mu} & \sigma_{\nu-\mu+1} &  \cdots   &   \sigma_{\nu} \\
        \sigma_{\nu-\mu+1} & \sigma_{\nu-\mu+2} &  \cdots   &   \sigma_{\nu+1}      \\
        \vdots                       &  \vdots                      &  \ddots   &       \vdots               \\
        \sigma_\nu               & \sigma_{\nu+1}        &  \cdots    &  \sigma_{\nu+\mu}                                               \end{pmatrix}
\end{equation*}
and then defines (see his equation (8.7)) 
\begin{equation*}
\psi_r=\psi_{N-g+1}=(2\sz)^{(r^2-r-g^2+g)/2}\det{H_{\nu,\mu+1}}=(2\sz)^{N(N-2g+1)/2}\det{H_{\nu,\mu+1}}
\end{equation*}
and finally $P_r(\sx)=\psi_r$ if $N$ is odd and $P_r(x)=\psi_r/(2\sz)^{g}$ if $N$ is even (see (8.16) on page 134 of \cite{Ca94}).  Cantor proves that $P_r(\sx)$ is actually a polynomial in $\sx$, and this will also follow from our calculations.  In any case, 
\begin{align*}
\psi_{N-g+1}=&(-1)^{(g+1+\nu-\mu)(\mu+1)}(2\sz)^{N(N-2g+1)/2} \\
&\times \det{\begin{pmatrix} \sz_{\nu-\mu} & -\sz_{\nu-\mu+1} & \cdots    & (-1)^\mu \sz_{\nu}\\
-\sz_{\nu-\mu+1}& \sz_{\nu-\mu+2} &   \cdots   &   (-1)^{\mu+1} \sz_{\nu+1} \\
\vdots              &   \vdots             &    \ddots  &                 \vdots           \\ 
(-1)^{\mu}\sz_{\nu} & (-1)^{\mu+1}\sz_{\nu+1}  &  \cdots   & \sz_{\nu+\mu} \end{pmatrix}},
\end{align*}
where the sign is constant on each antidiagonal and alternates $+$, $-$, $+$, \dots{} from one antidiagonal to the next.  Removing the signs does not change the determinant, so the total sign is $(-1)^{(g+1+\nu-\mu)(\mu+1)}$ and Cantor's definition boils down to

\begin{defn} \label{def:cantor} Let $N\geq 2g+1$, $r=N-g+1$.  Define $\psi_r=\psi_{N-g+1}$ to be $(-1)^{(g+1+\nu-\mu)(\mu+1)}(2\sz)^{N(N-2g+1)/2}\det{V}$, where 
\begin{equation*}
V=\begin{pmatrix} \sz_{\nu-\mu} & \sz_{\nu-\mu+1} &  \cdots  & \sz_{\nu} \\
\sz_{\nu-\mu+1 } &   \sz_{\nu-\mu+2}     &      \cdots    & \sz_{\nu+1}             \\
\vdots    &      \vdots     &           \ddots&   \vdots                \\
\sz_{\nu}  &    \sz_{\nu+1}    &    \cdots    &  \sz_{N-g}   \end{pmatrix} .
\end{equation*}
Then define $P_{r}$ by $P_{r}=\psi_{r}$ if $N$ is odd and $P_{r}=\psi_{r}/(2\sz)^g$ if $N$ is even.
\end{defn}

\begin{prop}
Let $N\geq 2g+1$, and write $\delta_N=\lfloor{\frac{N}{2}}\rfloor$. Then $P_{N-g+1}(\sx)$ is a polynomial in $\sx$ and in fact 
\begin{equation*}
P_{N-g+1}(\sx)=(-1)^{(1-\delta_N)(\mu+1)+\lfloor{(\mu+1)/2}\rfloor}\Delta_N(\sx).
\end{equation*}
\end{prop}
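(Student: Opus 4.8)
The plan is to start from Definition~\ref{def:cantor} and transform the Hankel determinant $\det V$ into the leftmost subdeterminant $\det\sS_j$ computed in Corollary~\ref{cor:pidet}, keeping careful track of the powers of $2\sz$ and of the sign. The first step uses the identity $\sz_n=\myss_n/(2\sz)^{2n-1}$, which holds for every index $n$ occurring in $V$: all such $n$ satisfy $n\geq\nu-\mu>g$, so Lemma~\ref{lem:degbound}\case{b} gives $\st_{0,n}=0$ and $\myss_n=\su_{0,n}$, whence $\sz_n=\sD_n\sz=\su_{0,n}/(2\sz)^{2n-1}$. Substituting this into the $(a,b)$ entry $\sz_{\nu-\mu+a+b-2}$ of $V$, I can pull a factor $(2\sz)^{-2a}$ out of row $a$ and a factor $(2\sz)^{-(2b+2\nu-2\mu-5)}$ out of column $b$. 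Summing the exponents over $a$ and $b$ from $1$ to $\mu+1$ shows that the total power of $2\sz$ extracted is $(2\sz)^{-(\mu+1)(2\nu-1)}$, and the remaining matrix has $(a,b)$ entry $\myss_{\nu-\mu+a+b-2}$.

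The second step is a purely combinatorial reduction of this Hankel-type matrix. Reversing the order of its $\mu+1$ rows, i.e.\ sending row $a$ to row $\mu+2-a$, turns its $(a,b)$ entry into $\myss_{\nu+b-a}$, which is exactly the $(a,b)$ entry of the matrix $\sS_j$ attached to the leftmost index $j=(\nu,\nu+1,\dots,\nu+\mu)$ in Corollary~\ref{cor:pidet}. That corollary identifies $\det\sS_j$ with $\Pi_{j,N}=\Gamma_N/\sF^{\mu(\mu+1)/2}=\Delta_N$. Since the row reversal is a permutation of sign $(-1)^{\lfloor(\mu+1)/2\rfloor}$ (using $\binom{\mu+1}{2}\equiv\lfloor(\mu+1)/2\rfloor\pmod 2$), I obtain $\det V=(-1)^{\lfloor(\mu+1)/2\rfloor}(2\sz)^{-(\mu+1)(2\nu-1)}\Delta_N$.

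The third step combines this with the two prefactors in Definition~\ref{def:cantor}. Using $\nu=\lfloor N/2\rfloor+1$ and Lemma~\ref{lem:not}, one checks that the net power of $2\sz$ in $\psi_{N-g+1}=(-1)^{(g+1+\nu-\mu)(\mu+1)}(2\sz)^{N(N-2g+1)/2}\det V$ equals $N(N-2g+1)/2-(\mu+1)(2\nu-1)$, which simplifies to $0$ when $N$ is odd and to $g$ when $N$ is even. Hence $\psi_{N-g+1}=(\pm1)\Delta_N$ for $N$ odd and $\psi_{N-g+1}=(\pm1)(2\sz)^g\Delta_N$ for $N$ even; after dividing by $(2\sz)^g$ in the even case, $P_{N-g+1}$ equals $\pm\Delta_N$ in both cases. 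This reproves en passant Cantor's assertion that $P_{N-g+1}$ is a polynomial in $\sx$, with no extraneous factor of $\sF=(2\sz)^2$ surviving.

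It remains to pin down the sign, and this parity bookkeeping is the delicate part of the argument. Collecting the contributions, the overall sign is $(-1)^{(g+1+\nu-\mu)(\mu+1)+\lfloor(\mu+1)/2\rfloor}$. To match it with the exponent $(1-\delta_N)(\mu+1)+\lfloor(\mu+1)/2\rfloor$ of the statement, I would substitute $\nu=\delta_N+1$ and the value of $\nu-\mu$ from Lemma~\ref{lem:not}, thereby reducing the claim to the congruence $(g+1+\nu-\mu)\equiv(1-\delta_N)\pmod 2$ weighted by the parity of $\mu+1$. The main obstacle is to carry out this reduction cleanly for both parities of $N$ at once: because $\delta_N=\lfloor N/2\rfloor$ and $\lfloor(\mu+1)/2\rfloor$ depend on the residue of $N$ (and, at the boundary $N=2g+1$, on the parity of $g$), I expect to have to separate the congruence into these cases and verify it residue by residue, which is where any sign slip in the stated exponent would surface.
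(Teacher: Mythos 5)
Your argument follows the paper's proof essentially step for step: the substitution $\sz_n=\myss_n(\sx)/(2\sz)^{2n-1}$ (justified, as you note, by Lemma~\ref{lem:degbound}\case{b} since every index occurring in $V$ exceeds $g$), the reversal of the rows of $V$ contributing $(-1)^{\lfloor{(\mu+1)/2}\rfloor}$, the extraction of a total factor $(2\sz)^{-(2\nu-1)(\mu+1)}$, and the identification of the resulting matrix with $\sS_j(\sx)$ for $j=(\nu,\nu+1,\dots,\nu+\mu)$ via Corollary~\ref{cor:pidet}. The only cosmetic difference is that you extract the powers of $2\sz$ by row and column operations, whereas the paper observes that the power of $\sF(\sx)$ equals $\nu(\mu+1)$ in every term of the permutation expansion; the count $-(\mu+1)(2\nu-1)$ is the same. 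Your evaluation of the surviving power of $2\sz$ ($0$ for $N$ odd, $g$ for $N$ even) and the resulting sign $(-1)^{(g+1+\nu-\mu)(\mu+1)+\lfloor{(\mu+1)/2}\rfloor}$ agree exactly with the paper's penultimate display.

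The one step you leave unfinished is the reduction of the exponent $(g+1+\nu-\mu)(\mu+1)$ to $(1-\delta_N)(\mu+1)$ modulo $2$, and your caution there is warranted: this congruence fails for certain residues of $N$. For $N$ odd, Lemma~\ref{lem:not} gives $g+1+\nu-\mu=2g+2$, which is even, while $1-\delta_N=1-\tfrac{N-1}{2}$ is odd whenever $N\equiv 1\pmod 4$; so already for $N=5$, $g=2$ (where $\mu+1=1$) the two exponents have opposite parity. Concretely, for that case $\psi_4=(2\sz)^{5}\sz_3=\myss_3=\Delta_5(\sx)$, so the true sign is $+1$, whereas the displayed formula predicts $(-1)^{(1-2)\cdot 1}=-1$; a similar mismatch occurs for $N\equiv 2\pmod 4$. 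In other words, what you have actually proved, $P_{N-g+1}(\sx)=(-1)^{(g+1+\nu-\mu)(\mu+1)+\lfloor{(\mu+1)/2}\rfloor}\Delta_N(\sx)$, is correct and coincides with the paper's intermediate expression, and the remaining "parity bookkeeping" cannot be completed as stated because the exponent in the proposition contains a slip. Your proof is complete once you stop at that expression rather than trying to force the stated form.
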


\begin{proof}
Let $V'$ be the matrix obtained from $V$ by interchanging rows placed symmetrically around the middle, so that
\begin{equation*}
V'=\begin{pmatrix} \sz_{\nu} & \sz_{\nu+1} &  \cdots   &  \sz_{N-g}\\
\sz_{\nu-1} & \sz_{\nu} &  \cdots   &  \sz_{N-g-1}\\
\vdots      &  \vdots  &   \ddots &  \vdots  \\
\sz_{\nu-\mu} & \sz_{\nu-\mu+1} &  \cdots  & \sz_{\nu} \end{pmatrix}. 
\end{equation*}
Then $\det{V'}=(-1)^{\lfloor{(\mu+1)/2}\rfloor}\det{V}$ since $V'$ is obtained from $V$ by a permutation of rows that is a product of $\lfloor{(\mu+1)/2}\rfloor$  transpositions. 

Since $\sz_m=\frac{\myss_m(\sx)}{(2\sz)^{2m-1}}=\myss_m(\sx)\frac{2\sz}{\sF(\sx)^m}$, we have
\begin{equation*}
\det{V'}=\det{\begin{pmatrix}  \myss_{\nu}(\sx)\frac{2\sz}{\sF(\sx)^\nu}&  \myss_{\nu+1}(\sx)\frac{2\sz}{\sF(\sx)^{\nu+1}} & \cdots   &   \myss_{\nu+\mu}(\sx)\frac{2\sz}{\sF(\sx)^{\nu+\mu}} \\
\myss_{\nu-1}(\sx)\frac{2\sz}{\sF(\sx)^{\nu-1}}&  \myss_{\nu}(\sx)\frac{2\sz}{\sF(\sx)^{\nu}} & \cdots   &   \myss_{\nu+\mu-1}(\sx)\frac{2\sz}{\sF(\sx)^{\nu+\mu-1}}  \\
\vdots                                           &   \vdots                         &  \ddots                  &  \vdots     \\
\myss_{\nu-\mu}(\sx)\frac{2\sz}{\sF(\sx)^{\nu-\mu}}&  \myss_{\nu-\mu+1}(\sx)\frac{2\sz}{\sF(\sx)^{\nu-\mu+1}} & \cdots   &   \myss_{\nu}(\sx)\frac{2\sz}{\sF(\sx)^{\nu}} \end{pmatrix}}.
\end{equation*}
If $i$, $\ell\in \{1,2,\dots, \mu+1\}$, the power of $\sF(\sx)$ in the denominator of the entry at the intersection of the $i^{\text{th}}$ row and the $\ell^{\text{th}}$ column is $\sF(\sx)^{\nu+i-\ell}$. Hence in the development of the determinant, the power of $\sF(\sx)$ in the denominator of the product corresponding to the permutation $\pi$ of $\{1,2,\dots, \mu+1\}$ is $\sF(\sx)^{\sum_{i=1}^{\mu+1}\nu+i-\pi(i)}=\sF(\sx)^{\nu(\mu+1)}$. Since $\sF(\sx)=(2\sz)^2$, it follows that
\begin{equation*}
\det{V'}=(2\sz)^{-(2\nu-1)(\mu+1)}\det{\begin{pmatrix}  \myss_{\nu} & \myss_{\nu+1} & \cdots   &    \myss_{\nu+\mu} \\
\myss_{\nu-1} & \myss_{\nu} & \cdots   &    \myss_{\nu+\mu-1} \\
\vdots     &    \vdots   &  \ddots  &   \vdots          \\
\myss_{\nu-\mu} & \myss_{\nu-\mu+1} & \cdots   &  \myss_{\nu} \end{pmatrix}}
\end{equation*}
and this is equal to $(2\sz)^{-(2\nu-1)(\mu+1)}\Delta_N(\sx)$ by Corollary~\ref{cor:pidet}. (The matrix on the right is just the matrix $\sS_j(\sx)$ with $j=(\nu,\nu+1, \dots,  \nu+\mu)$.) Finally
\begin{equation*}
(2\nu-1)(\mu+1)=\begin{cases} N(N-2g+1)/2  & \text{if $N$ is odd}\\
                                       N(N-2g+1)/2-g          & \text{if $N$ is even}\\
                                       \end{cases}
\end{equation*}
so that from Definition~\ref{def:cantor} and Lemma~\ref{lem:not},
 \begin{align*}
P_{N-g+1}(\sx)&=(-1)^{(g+1+\nu-\mu)(\mu+1)}(2\sz)^{(2\nu-1)(\mu+1)}\det{V}\\ 
&=(-1)^{(g+1+\nu-\mu)(\mu+1)+\lfloor{(\mu+1)/2}\rfloor}(2\sz)^{(2\nu-1)(\mu+1)}\det{V'}\\ 
&=(-1)^{(1-\delta_N)(\mu+1)+\lfloor{(\mu+1)/2}\rfloor}\Delta_N(\sx)
\end{align*}
as claimed.
\end{proof}

\section{Hyperelliptic curves of odd degree} \label{sec:hyperelliptic} 

In this section, we apply the results of \S~\ref{sec:HSanduniv} to prove several bounds on the torsion of odd degree hyperelliptic curves, of which Theorem~\ref{th:helltorsionbound} and Proposition~\ref{prop:2g12} are special cases.

\subsection{Preliminaries} \label{sec:hyperprelims} The field $k$ is always supposed to be algebraically closed.  We work with an affine model $y^2+Q(x)y=P(x)$ of the hyperelliptic curve $X_{P,Q}$ of genus $g\geq 1$. Here $P(x)$ is monic of degree $2g+1$ and $Q(x)$ has degree at most $g$. Write $F(x)=Q(x)^2+4P(x)$. Then $(2y+Q(x))^2=F(x)$ and the  hypothesis that $X$ is smooth implies that $F\neq 0$.  Let $J_{P,Q}$ be the jacobian of $X$. We always think of $X$ as embedded in $J$ using $\infty$ as base point.  Write $K$ for the rational function field $k(x)$ and $L=k(X)$, so that $L$ is a separable quadratic extension of $K$. 
  
There is a unique ring homomorphism $\Lambda\to k$ that sends $p_i$ to the coefficient of $x^i$ in $P$ for all $i\in \{0,1,\dots, 2g\}$ and $q_i$ to the coefficient of $x^i$ in $Q$ for all $i\in \{0,1,\dots, g\}$. It extends to a homomorphism $\sigma:\Lambda(\sx)\to K$ by sending $\sx$ to $x$. In particular, $\sigma(\Lambda[\sx])\subseteq k[x]$, and $\sigma(\sP(\sx))=P(x)$,  $\sigma(\sQ(\sx))=Q(x)$ and $\sigma(\sF(\sx))=F(x)$. Since $y(y+Q(x))=P(x)$ and $P(x)\neq 0$, $\sigma$ then extends to a homomorphism $\Lambda(\sx,\sy)\to L$ with $\sigma(\sy)=y$. We use $\sigma$ to transfer concepts discussed in \S~\ref{sec:HSanduniv} to $L$.  So from now on, $D_1$ denotes the extension to $L$ of the standard derivation on $K$ (so $D_1x=1$ and $D_1\lambda=0$ for all $\lambda\in k$) and $D=(D_n)_{n\geq 0}$ the associated Hasse-Schmidt derivative.  Then clearly $\sigma\circ \sD_n=D_n\circ \sigma$ for all $n$. From now on, we denote by an italicized letter $s_{i,n}$, $M$, $\varSigma$, $\varPsi$, \dots{} the object obtained by applying $\sigma$ to the object denoted in \S~\ref{sec:HSanduniv} by the corresponding sans serif or upright greek letter $\myss_{i,n}$, $\sM$, $\Sigma$, $\Psi$, \dots{} . 

A few remarks are in order.  The definitions of $\mysf$ and $\sz$ involve division by $2$, so to avoid possible confusion in characteristic $2$ we shall not use $f$ and $z$ in this way. Note that $2\sz=2\sy+\sQ(\sx)$ which belongs to $\Lambda[\sx,\sy]$, so $\sigma(2\sz)=2y+Q(x)$ is well-defined in every characteristic (and equal to $Q(x)$ in characteristic $2$). Thus again to avoid confusion, we shall always denote $\sigma(2\sz)$ by $2y+Q(x)$. Similarly, we shall use only the notations $s_{i,n}$, $t_{i,n}$ and $u_{i,n}$ for values of $i$ and $n$ for which Lemma~\ref{lem:degbound} asserts that $\myss_{i,n}$, $\st_{i,n}$ and $\su_{i,n}$ belong to $\Lambda[x]$.  

As another example, the leading coefficient of $\Gamma_{N}(x)$ that we computed in Proposition~\ref{prop:valleadcoeffs} may now be $0$. Thus we can only conclude that the degree of $\varGamma_{N}(x)$ ($=\sigma(\Gamma_N(\sx))$) is \emph{bounded} by  $g((N-g)(N-g+1)-\nu(\nu-1))+\frac{\mu(\mu+1)}{2}$.  Similarly, by Corollary~\ref{cor:degDelta}, the degree of $\varDelta_{N}(x)$ is bounded by $\delta_{N,g}$ and not necessarily equal to it. This explains the way the conclusions of Theorem~\ref{th:helltorsionbound} have been presented. 

The following lemma is easy and left to the reader.

\begin{lemma}\label{lem:basicHS}

\case{a} Let $\lambda\in k$. Then for all $m$, $n\geq 0$ we have $D_n(x-\lambda)^m=\binom{m}{n}(x-\lambda)^{m-n}$.  

\case{b}  Let $\lambda\in k$, let $n\geq 0$ and let $(a_0,a_1,\dots, a_n)\in k^{n+1}$. Then there is a unique $\Phi(x)\in k[x]$ of degree at most $n$ such that $D_r\Phi(\lambda)=a_r$ for all $r\in \{0,1,\dots, n\}$. 

\case{c}  Let $\lambda\in k$. Then $D$ has a unique extension to $k((x-\lambda))$ that is continuous in the $(x-\lambda)$-adic topology. Denote  this extension also by $D$. 

\case{d} Let $\lambda\in k$, let $n\geq 0$ and let $\psi\in k[[x-\lambda]]$. Then $D_r\psi(\lambda)=0$ for all $r\in \{0,\dots, n-1\}$ if and only if $\psi\in (x-\lambda)^nk[[x-\lambda]]$. 
\end{lemma}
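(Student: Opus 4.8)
The common thread is that the Hasse-Schmidt operator $D_r$ records the $r$-th Taylor coefficient at $\lambda$: for a polynomial or series $\Phi$, the scalar $D_r\Phi(\lambda)$ is the coefficient of $(x-\lambda)^r$ in the expansion of $\Phi$ about $\lambda$. Once \case{a} is in hand, parts \case{b}, \case{c} and \case{d} are all formal consequences of this principle, so the plan is to prove \case{a} first and then read off the rest.

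For \case{a} the cleanest route is translation invariance. Put $u=x-\lambda$; then $D_1u=1$ and $D_1$ still annihilates $k$, so by the uniqueness clause in the definition of the Hasse-Schmidt system attached to $D_1$, the operators $D_n$ expressed in the variable $u$ coincide with the Hasse-Schmidt derivatives of $k[u]$. Hence $D_nu^m=\binom{m}{n}u^{m-n}$, which is exactly the assertion. (Alternatively one iterates the Leibniz rule on $(x-\lambda)^m$, using $D_0(x-\lambda)=x-\lambda$, $D_1(x-\lambda)=1$ and $D_\ell(x-\lambda)=0$ for $\ell\geq 2$; the number of ways to select the $n$ factors contributing a $D_1$ is $\binom{m}{n}$.)

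Parts \case{b} and \case{d} then follow by expanding about $\lambda$. Writing $\Phi=\sum_j c_j(x-\lambda)^j$ (a finite sum in \case{b}, a power series in \case{d}) and applying \case{a} term by term gives $D_r\Phi=\sum_j c_j\binom{j}{r}(x-\lambda)^{j-r}$; evaluating at $x=\lambda$ kills every summand except $j=r$, so $D_r\Phi(\lambda)=c_r$. For \case{b} this forces $c_r=a_r$ for $0\leq r\leq n$, which yields both existence and uniqueness of a $\Phi$ of degree at most $n$. For \case{d} it shows $D_r\psi(\lambda)=c_r$, so the vanishing of $D_0\psi(\lambda),\dots,D_{n-1}\psi(\lambda)$ is equivalent to $c_0=\cdots=c_{n-1}=0$, i.e. to $\psi\in(x-\lambda)^nk[[x-\lambda]]$.

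For \case{c} I would define the extension coefficientwise, setting $D_n\bigl(\sum_m c_m(x-\lambda)^m\bigr)=\sum_m c_m\binom{m}{n}(x-\lambda)^{m-n}$ with $\binom{m}{n}$ the generalized binomial coefficient for possibly negative $m$. One checks that this is $k$-linear, restricts to the given $D_n$ on Laurent polynomials, and is continuous because it maps the subspace of series of valuation at least $M$ into that of valuation at least $M-n$. Uniqueness follows since the Laurent polynomials $k[x-\lambda,(x-\lambda)^{-1}]$ are dense in $k((x-\lambda))$ and any continuous $k$-linear extension is pinned down by its values there. The only point requiring any care, and the sole genuine obstacle, lies in \case{c}: one must confirm that the generalized binomial formula on negative powers is the \emph{forced} extension, which is guaranteed precisely by continuity together with density, and organize the valuation estimate so that the uniqueness statement is honest rather than circular.
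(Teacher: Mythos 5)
Your proof is correct. The paper offers no proof of this lemma to compare against---it is explicitly ``easy and left to the reader''---and your argument (establish \case{a} so that $D_r\Phi(\lambda)$ is identified with the $r$-th Taylor coefficient of $\Phi$ at $\lambda$, then read off \case{b} and \case{d} by expanding in powers of $x-\lambda$, and obtain \case{c} by the coefficientwise formula together with continuity and density) is exactly the standard one the author presumably has in mind. The only loose joint is your first justification of \case{a} via ``the uniqueness clause'': as stated, the paper's definition pins down $D_n$ by its values on the monomial basis $x^m$, not by an intrinsic characterization, so appealing to uniqueness after the substitution $u=x-\lambda$ needs the Leibniz-plus-generator characterization of Hasse--Schmidt derivations; but your fallback argument by iterating the Leibniz rule on the $m$ factors of $(x-\lambda)^m$ closes this cleanly, so nothing is missing.
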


\subsection{Application to hyperelliptic curves} Denote by $\iota$ the hyperelliptic involution of $X$. The following lemma is well-known.

\begin{lemma} \label{lem:first} Suppose that $N\geq 2$. Interpret all polynomials of degree $<0$ as the zero polynomial.
  
\case{a} Every element of $\cL(N[\infty])$ can be written in a unique manner in the form $\varPhi(x)+\varPsi(x)y$, where $\varPhi$, $\varPsi\in k[x]$ and 
\begin{equation*}
\deg{\varPhi}\leq \nu-1, \qquad   \deg{\varPsi}\leq \mu.
\end{equation*}

\case{b} One has $x\in \cL(2[\infty])$ and $y\in \cL((2g+1)[\infty])$, the orders of the poles at $\infty$ being respectively $2$ and $2g+1$. 

\case{c} The action of $\iota$ on $L$ fixes $\cL(N[\infty])$ and sends $\varPhi(x)+\varPsi(x)y$ to $\varPhi(x)-\varPsi(x)(y+Q(x))$. 

\case{d} Let $\xi \in X$. Then $\iota(\xi)=\xi$ if and only if $\xi\in J[2]$. If $N\geq 2$, then $\xi\in J[N]^*$ if and only if $\iota(\xi)\in J[N]^*$.

\case{e} Let $(x_0,y_0)\in X-\{\infty\}$. Then $(x_0,y_0)\in J[2]^*$ if and only if $F(x_0)=0$.

\case{f} If $3\leq N\leq 2g$, then $X\cap J[N]^*$ is empty.

\case{g} The fixed field of the action of $\iota$ on $L$ is $K$ and the places ramified in $L/K$ are those corresponding to the points of $X\cap J[2]$.
\end{lemma}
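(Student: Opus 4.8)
The plan is to establish the pole orders at $\infty$ first (the content of \case{b}), deduce the explicit basis in \case{a} by the parity of those orders, and then obtain \case{c}--\case{g} as essentially formal consequences. For \case{b}, since the model has odd degree and $\infty$ is a Weierstrass point, the degree-$2$ map $x\colon X\to\bP^1$ is totally ramified over $x=\infty$, so $x$ has a double pole there. For the pole order $m$ of $y$ I would read it off $y^2+Q(x)y=P(x)$: the terms $y^2$, $Q(x)y$, $P(x)$ have pole orders $2m$, at most $2g+m$, and $4g+2$, and since their sum vanishes, a short case analysis forces $m=2g+1$ (neither $2m<4g+2$ nor $m>2g+1$ can balance the equation). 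The only delicate point is characteristic $2$, where $F=Q^2$; I would phrase everything through $2y+Q(x)$, whose square is $F(x)\neq 0$, so that no step divides by $2$.

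For \case{a}, the key input is that $k[x]\oplus k[x]\,y$ is exactly the ring of functions regular away from $\infty$ (it is the coordinate ring of the smooth affine curve $X-\{\infty\}$, and $y$ is integral over $k[x]$). Writing $w=\varPhi(x)+\varPsi(x)y$ via the $K$-basis $\{1,y\}$ of $L$, membership in $\cL(N[\infty])$ forces $\varPhi,\varPsi\in k[x]$ by regularity at finite points. The pole orders from \case{b} then give $\mathrm{ord}_\infty(\varPhi(x))=-2\deg\varPhi$ and $\mathrm{ord}_\infty(\varPsi(x)y)=-(2\deg\varPsi+2g+1)$; these have opposite parity, so there is no cancellation and the pole order at $\infty$ is the larger of the two. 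Imposing that it be $\le N$ yields precisely $\deg\varPhi\le\nu-1$ and $\deg\varPsi\le\mu$, and conversely every such pair lies in $\cL(N[\infty])$. The resulting dimension $\nu+(\mu+1)=N-g+1$ (via Lemma~\ref{lem:not}) agrees with Riemann--Roch when $N\ge 2g-1$, which is a useful sanity check.

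The parts \case{c}--\case{e} follow quickly. \case{c} is immediate from $\iota^\ast x=x$, $\iota^\ast y=-(y+Q(x))$, together with the fact that $\iota$ fixes $\infty$ and hence preserves $\cL(N[\infty])$; one checks the image stays within the degree bounds using $\mu+g\le\nu-1$. For \case{d} I would use that, because $\infty$ is a Weierstrass point, the function $x-x_0$ has divisor $[\xi]+[\iota\xi]-2[\infty]$, so under the Abel--Jacobi embedding $\iota$ acts as $[-1]$ on $J$; injectivity of the embedding (valid for $g\ge1$) then gives $\iota\xi=\xi\iff \xi\in J[2]$, and invariance of order under negation gives the statement about $J[N]^*$. \case{e} combines \case{d} with the observation that $\iota(x_0,y_0)=(x_0,y_0)$ iff $2y_0+Q(x_0)=0$, which (taking squares, legitimate in any characteristic) is equivalent to $F(x_0)=0$.

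The two remaining parts are where the ideas are sharpest. For \case{f}, when $N\le 2g$ one has $\mu<0$, so \case{a} forces $\varPsi=0$: a function with divisor $N[\xi]-N[\infty]$ would be a polynomial in $x$ alone, whose divisor is $\iota$-invariant, forcing $\iota\xi=\xi$ and hence $\xi\in J[2]$ by \case{d}, contradicting exact order $N\ge3$. For \case{g}, the fixed field of $\iota$ is $K$ because $[L:K]=2$ and $x$ is fixed; a finite place $x=a$ ramifies in $L/K$ iff $y^2+Q(a)y=P(a)$ has a repeated root, i.e.\ iff $F(a)=0$, which by \case{e} singles out the $J[2]^*$ points, while the place at infinity is ramified and corresponds to $\infty\in X\cap J[2]$. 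I expect the main obstacle to be the characteristic-$2$ bookkeeping throughout: separability of $L/K$ then requires $Q\neq0$, one has $F=Q^2$, and the naive "$y\mapsto-y$" picture degenerates. The remedy, which also makes \case{e} and \case{g} uniform across characteristics, is to route every fixed-point and ramification computation through $2y+Q(x)$ and $F(x)$ rather than through $y$ directly.
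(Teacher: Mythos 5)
Your proof is correct. The paper states this lemma without proof, calling it well-known and noting only that \case{f} follows from the earlier assertions and \case{g} from \case{c}, \case{d} and \case{e}; your argument supplies exactly the standard details in the way those remarks suggest, including the uniform characteristic-$2$ treatment via $2y+Q(x)$ and $F(x)$ that the paper itself adopts elsewhere.
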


\begin{proof} Here \case{g} follows from \case{c}, \case{d} and \case{e}.  Also, \case{f} was proved by Zarhin~\cite{Za19} (when $p\neq 2$), but at the suggestion of one of the referees we show how to deduce it from the previous assertions.   Suppose for a contradiction that $3\leq N\leq 2g$ and $\xi\in X\cap J[N]^*$. Then there exists a function $\alpha(x,y)=\varPhi(x)+\varPsi(x)y$ on $X$ as in \case{a},  with divisor $N([\xi]-[\infty])$. Since $N<2g+1$ and $2g+1$ is odd, we see using \case{b} that $\varPsi(x)=0$. Hence $\alpha(x,y)=\varPhi(x)$, so since $\alpha(\xi)=0$, we also have $\alpha(\iota(\xi))=0$. But $\xi$ is the only point of $X$ where $\alpha$ vanishes, so $\iota(\xi)=\xi$. Since $N\geq 3$, this contradicts \case{d}.
\end{proof}

Set $\cO=k[x,y][\frac{1}{F(x)}]$, so that if $(x_0,y_0)\in X$ and $F(x_0)\neq 0$, then by \case{g} $\cO$ embeds in $k[[x-x_0]]$ and $D$ operates on $\cO$ \emph{via} this embedding.

\begin{prop}\label{prop:premain} Let $\alpha\in L^\times$, let $N\geq 2g+1$ and let $\xi=(x_0,y_0)\in X-\{\infty\}$. Suppose $\xi\notin J[2]$. Then the following are equivalent.

\case{a}  $\alpha$ has divisor $N([\xi]-[\infty])$.

\case{b}  There exist non-zero $\varPsi$, $\varPhi\in k[x]$ with degrees at most $\mu$ and $\nu-1$ such that $\alpha=\varPsi(x)y+\varPhi(x)$ and $D_r\alpha(\xi)=0$ for all $r\in \{0,1,\dots, N-1\}$.
\end{prop}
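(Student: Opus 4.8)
The common engine of both implications is the translation of the order of vanishing of a function at $\xi$ into the vanishing of its Hasse--Schmidt derivatives. Since $F(x_0)\neq 0$, the point $\xi$ does not lie in $J[2]$ by Lemma~\ref{lem:first}~\case{e}, so by Lemma~\ref{lem:first}~\case{g} the place $\xi$ is unramified over $x_0$; hence $x-x_0$ is a local uniformizer at $\xi$ and $\cO=k[x,y][\frac1{F(x)}]$ embeds in $k[[x-x_0]]$. Applying Lemma~\ref{lem:basicHS}~\case{d} to the image of $\alpha$ gives the key equivalence: for any $\alpha\in\cO$,
\begin{equation*}
\operatorname{ord}_\xi(\alpha)\geq N \iff D_r\alpha(\xi)=0 \text{ for all } r\in\{0,1,\dots,N-1\}.
\end{equation*}
Thus the derivative condition in \case{b} is exactly the condition that $\alpha$ vanish to order at least $N$ at $\xi$. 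Note that this condition only makes sense because $F(x_0)\neq 0$, so throughout $\xi\notin J[2]$ and $\iota(\xi)\neq\xi$ by Lemma~\ref{lem:first}~\case{d}.

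For \case{b}$\Rightarrow$\case{a}, the degree bounds $\deg\varPhi\leq\nu-1$ and $\deg\varPsi\leq\mu$ place $\alpha=\varPsi(x)y+\varPhi(x)$ in $\cL(N[\infty])$ by Lemma~\ref{lem:first}~\case{a}; in particular $\alpha$ is regular away from $\infty$, so $\operatorname{ord}_\eta(\alpha)\geq 0$ for every $\eta\neq\infty$. Because $x$ and $y$ have poles at $\infty$ of orders $2$ and $2g+1$ (Lemma~\ref{lem:first}~\case{b}), of opposite parity, the two summands $\varPhi(x)$ and $\varPsi(x)y$ cannot cancel to leading order, and the degree bounds translate precisely into $\operatorname{ord}_\infty(\alpha)\geq -N$. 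The key equivalence gives $\operatorname{ord}_\xi(\alpha)\geq N$. Since $\alpha\neq 0$ and every principal divisor has degree $0$,
\begin{equation*}
0=\sum_{\eta}\operatorname{ord}_\eta(\alpha)\geq \operatorname{ord}_\xi(\alpha)+\operatorname{ord}_\infty(\alpha)\geq N-N=0,
\end{equation*}
forcing equality everywhere: $\operatorname{ord}_\xi(\alpha)=N$, $\operatorname{ord}_\infty(\alpha)=-N$ and $\operatorname{ord}_\eta(\alpha)=0$ otherwise. Hence $\operatorname{div}(\alpha)=N([\xi]-[\infty])$.

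Conversely, for \case{a}$\Rightarrow$\case{b}, if $\operatorname{div}(\alpha)=N([\xi]-[\infty])$ then $\alpha$ has its only pole at $\infty$, of order $N$, so $\alpha\in\cL(N[\infty])$ and Lemma~\ref{lem:first}~\case{a} yields the representation $\alpha=\varPsi(x)y+\varPhi(x)$ with the stated degree bounds. The zero of order $N$ at $\xi$ gives, via the key equivalence, $D_r\alpha(\xi)=0$ for all $r<N$. It remains to see that $\varPsi\neq 0$: were $\varPsi=0$, then $\alpha=\varPhi(x)\in K$ would be $\iota$-invariant, so $\operatorname{div}(\alpha)$ would be $\iota$-invariant too; but $\iota$ carries $N([\xi]-[\infty])$ to $N([\iota\xi]-[\infty])$, which differs since $\iota\xi\neq\xi$. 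Thus $\varPsi\neq 0$.

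The work lies not in any single hard step but in two points of care. The first is the bookkeeping at $\infty$: one must check that the degree bounds defining $\nu$ and $\mu$ give exactly $\operatorname{ord}_\infty(\alpha)\geq -N$ with no cancellation, the leading pole coming from $\varPsi(x)y$ when $N$ is odd and from $\varPhi(x)$ when $N$ is even, so the parity of the extremal pole order always matches $N$. The second is the handling of $J[2]$: the derivative condition in \case{b} is meaningful only when $F(x_0)\neq 0$, and it is precisely the nondegeneracy $\varPsi\neq 0$ (equivalently $\alpha\notin K$) that excludes the degenerate $2$-torsion situation. I would emphasize that this argument guarantees $\varPsi\neq 0$ but not $\varPhi\neq 0$ (for example $\alpha=y$ on $y^2+y=x^3$ with $N=3$ has $\varPhi=0$), so the operative condition in \case{b} is $\varPsi\neq 0$.
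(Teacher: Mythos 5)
Your argument is essentially the paper's: both directions rest on representing $\alpha$ via Lemma~\ref{lem:first}~\case{a}, converting the order of vanishing at $\xi$ into the vanishing of Hasse--Schmidt derivatives via Lemma~\ref{lem:basicHS}~\case{d}, and invoking that a principal divisor has degree zero. Two of your additions are improvements: the $\iota$-invariance argument for $\varPsi\neq 0$ (which the paper merely asserts), and the observation that $\varPhi$ \emph{can} vanish --- indeed $\alpha=y$ on $y^2+y=x^5$ with $N=5$ and $\xi=(0,0)$ gives $\varPhi=0$ already for $g=2$ --- so the requirement that $\varPhi$ be non-zero in \case{b} is not actually attainable in general and the operative condition is, as you say, $\varPsi\neq 0$. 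The one thing you assert rather than derive is $F(x_0)\neq 0$: in the direction \case{b}$\Rightarrow$\case{a} it is implicit in the condition that $D_r\alpha(\xi)$ be defined, but in \case{a}$\Rightarrow$\case{b} it must be deduced from the hypothesis (for $N$ odd it follows, since $2([\xi]-[\infty])$ and $N([\xi]-[\infty])$ both principal would force $[\xi]-[\infty]$ principal, impossible for $g\geq 1$; for $N$ even and $\xi\in J[2]^*$ the statement genuinely degenerates, as then $\alpha=c(x-x_0)^{N/2}$ and $\varPsi=0$ --- a case the paper also glosses over and which is excluded in all subsequent applications, where the conclusion is membership in $\tilde{J}[N]$).
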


\begin{proof} \case{a}$\,\Rightarrow\,$\case{b}  Since $\alpha$ has a pole of order $N$ at $\infty$ and is finite elsewhere, there exist $\varPsi$, $\varPhi$ whose degree satisfy the required inequalities. By \case{e} and \case{f} of Lemma~\ref{lem:first}, the hypotheses $N\geq 2g+1$ and $\xi\notin J[2]$ imply that neither $\varPsi$ nor $\varPhi$ can vanish, and also that $F(x_0)\neq 0$. Hence $\alpha\in \cO$ embeds in $k[[x-x_0]]$. Since $\alpha$ has a zero of order $N$ at $\xi$, $D_r\alpha(\xi)=0$ for all $r\in \{0,1,\dots, N-1\}$.

\case{b}$\,\Rightarrow\,$\case{a}   Suppose $\alpha$ has the given properties. Then $\alpha$ has a zero of order at least $N$ at $\xi$. On the other hand, the bounds of the degrees of $\varPsi$ and $\varPhi$ implies it has a pole of order at most $N$ at $\infty$, and is finite elsewhere. Since the degree of a principal divisor is zero, the divisor of $\alpha$ is necessarily $N([\xi]-[\infty])$. 
\end{proof}

\begin{theorem}  \label{th:mainth} Let $\xi =(x_0,y_0)\in X-\{\infty\}$, and let $N\geq 2g+1$. Then the following conditions are equivalent.

\case{a} $\xi \in \tilde{J}[N]$,

\case{b} There exists non-zero $\varPsi(x)\in k[x]$ of degree at most $\mu$ such that $D_r(\varPsi(x)y)(\xi)=0$ for all $r\in \{\nu, \dots, N-1\}$.
\end{theorem}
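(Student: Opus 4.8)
The plan is to prove the equivalence using Proposition~\ref{prop:premain} as the bridge, after first reducing to the case $F(x_0)\neq 0$. By Lemma~\ref{lem:first}~\case{e} an affine point $\xi$ lies in $J[2]$ exactly when $F(x_0)=0$, and since $\iota\xi=-\xi$ in $J$ we have $\xi\in\tilde J[N]$ iff $\xi$ has order dividing $N$ and $F(x_0)\neq 0$. So \case{a} forces $F(x_0)\neq 0$; and \case{b} likewise forces $F(x_0)\neq 0$, since at a Weierstrass point $x-x_0$ is not a uniformizer and the operators $D_r$ acquire poles along $X$ at $\xi$ (the denominators $(2y+Q)^{2r-1}$ of Lemma~\ref{lem:degbound}~\case{a} vanish), so the $D_r(\varPsi(x)y)(\xi)$ cannot be simultaneously finite and zero for a nonzero $\varPsi$. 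Hence in both implications we may assume $F(x_0)\neq 0$, in which case $\cO$ embeds in $k[[x-x_0]]$ and $D_r(\cdot)(\xi)$ is just the coefficient of $(x-x_0)^r$ in the Taylor expansion at $\xi$.

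For \case{b}$\Rightarrow$\case{a}, start from a nonzero $\varPsi$ of degree $\le\mu$ satisfying the vanishing hypothesis. Using the interpolation Lemma~\ref{lem:basicHS}~\case{b}, choose $\varPhi\in k[x]$ of degree $\le\nu-1$ with $D_r\varPhi(\xi)=-D_r(\varPsi(x)y)(\xi)$ for $r\in\{0,1,\dots,\nu-1\}$, and put $\alpha=\varPsi(x)y+\varPhi(x)$. By construction $D_r\alpha(\xi)=0$ for $r\le\nu-1$; for $r\in\{\nu,\dots,N-1\}$ the term $D_r\varPhi(\xi)$ vanishes because $\deg\varPhi\le\nu-1$, while $D_r(\varPsi(x)y)(\xi)=0$ by hypothesis, so $D_r\alpha(\xi)=0$ there too. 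Thus $\alpha$ vanishes to order $\ge N$ at $\xi$. Since $\deg\varPsi\le\mu$ and $\deg\varPhi\le\nu-1$, the pole order of $\alpha$ at $\infty$ is at most $\max(2\mu+2g+1,\,2(\nu-1))=N$ (using Lemma~\ref{lem:first}~\case{b} and Lemma~\ref{lem:not}); as $\alpha$ is regular away from $\infty$ and a principal divisor has degree $0$, its divisor is exactly $N([\xi]-[\infty])$. This is the implication \case{b}$\Rightarrow$\case{a} of Proposition~\ref{prop:premain} (and it covers the degenerate subcase $\varPhi=0$, where $\alpha=\varPsi(x)y$ already works), so $\xi\in J[N]$; with $F(x_0)\neq 0$ this gives $\xi\in\tilde J[N]$.

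For \case{a}$\Rightarrow$\case{b}, suppose $\xi\in\tilde J[N]$. Then $N([\xi]-[\infty])$ is principal and Proposition~\ref{prop:premain} furnishes $\alpha=\varPsi(x)y+\varPhi(x)$ with $\varPsi,\varPhi$ nonzero, $\deg\varPsi\le\mu$, $\deg\varPhi\le\nu-1$, and $D_r\alpha(\xi)=0$ for all $r\in\{0,\dots,N-1\}$; this $\varPsi$ is the candidate. Because $D_r(\varPsi(x)y)(\xi)=-D_r\varPhi(\xi)$ for $r\le N-1$, the bound $\deg\varPhi\le\nu-1$ immediately yields $D_r(\varPsi(x)y)(\xi)=0$ for all $r\in\{\nu,\dots,N-1\}$. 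By Lemma~\ref{lem:degbound}~\case{b} these are exactly the indices for which the expansion of $D_r(\varPsi(x)y)$ is $y$-free (the coefficients $t_{i,r}$ vanish once $r>i+g$), which is why they are controlled directly by $\alpha$.

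The main obstacle is the rest of the stated range, namely obtaining $D_r(\varPsi(x)y)(\xi)=0$ also for $r\in\{\mu+1,\dots,\nu-1\}$. These indices are not reached by the pole-order-$N$ function $\alpha$ alone, since there $D_r\varPhi(\xi)$ need not vanish and the $y$-dependent part (the $t_{i,r}$ terms) genuinely intervenes. My plan is to control them through the norm relation $\alpha\cdot\iota\alpha=c(x-x_0)^N$ with $c\in k^\times$ (valid because $[\xi]+[\iota\xi]-2[\infty]=\mathrm{div}(x-x_0)$), equivalently the Pell-type identity $W(x)^2-F(x)\varPsi(x)^2=4c(x-x_0)^N$ with $W=\alpha+\iota\alpha=2\varPhi-\varPsi Q\in k[x]$ (using Lemma~\ref{lem:first}~\case{c}): differentiating this identity at $x_0$ and combining with the degree constraints on $\varPhi$ should pin down the Taylor coefficients of $\varPhi$ in the window $\{\mu+1,\dots,\nu-1\}$. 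I expect this bookkeeping, together with the characteristic-$2$ case (where $2y+Q=Q$, so one must argue with $2y+Q$ in place of $y$ throughout), to be the delicate heart of the argument; the two implications above are otherwise routine consequences of Proposition~\ref{prop:premain} and the interpolation lemma.
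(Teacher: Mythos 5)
Up to the point where you stop, your argument is the paper's own: for \case{b}$\,\Rightarrow\,$\case{a} you interpolate $\varPhi$ of degree at most $\nu-1$ via Lemma~\ref{lem:basicHS}~\case{b} and invoke Proposition~\ref{prop:premain}, exactly as the paper does (though the paper excludes $\xi\in J[2]$ not by your unproved assertion that the values $D_r(\varPsi(x)y)(\xi)$ cannot all be finite and zero at a Weierstrass point, but by observing that $\alpha$ is not invariant under $\iota$ because $\varPsi\neq 0$, whereas $x-x_0$ is); and for \case{a}$\,\Rightarrow\,$\case{b} your deduction that $D_r(\varPsi(x)y)(\xi)=-D_r\varPhi(\xi)=0$ for $r\in\{\nu,\dots,N-1\}$ is, essentially word for word, the paper's entire proof of that direction. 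Note that your \case{b}$\,\Rightarrow\,$\case{a} likewise uses only the indices $r\geq\nu$.

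The genuine gap is the window $\{\mu+1,\dots,\nu-1\}$, which you correctly isolate and defer to a Pell-identity computation --- but that computation cannot succeed, because the assertion is false as printed. Over $k[[x-x_0]]$ the identity $W(x)^2-F(x)\varPsi(x)^2=4c(x-x_0)^N$ is just a restatement of the order-$N$ vanishing of $\alpha$ that you already have, and it imposes no constraint on $D_r\varPhi(\xi)$ for $r\leq\nu-1$; these window values $D_r(\varPsi(x)y)(\xi)=-D_r\varPhi(\xi)$ are genuinely nonzero in examples. Concretely, take $y^2+y=x^5$ with $p=0$ (so $g=2$) and $N=5$, whence $\mu=0$, $\nu=3$ and any admissible $\varPsi$ is a nonzero constant: at a point $\xi=(x_0,y_0)\in\tilde{J}[5]$ with $x_0^5=1$ (such points exist, since the paper's first example gives $\tilde{U}_5(x)=x(x^5-1)$) one has $D_1(\varPsi(x) y)(\xi)=5\varPsi x_0^4/(2y_0+1)\neq 0$, although $1=\mu+1$ lies in the stated range; since $\mu=0$, the candidate $\varPsi$ is unique up to scalar, so no other choice can help. (Structurally, for $\mu+1\leq r\leq \nu-1$ the derivative $D_r(\varPsi(x)y)$ still carries a nontrivial $y$-part, since Lemma~\ref{lem:degbound}~\case{b} requires $r>\deg{\varPsi}+g$ while $\nu-\mu=g+1$ or $g+2$ by Lemma~\ref{lem:not}, so these values are intrinsic to $\xi$ and not adjustable through $\varPhi$.) The printed index set $\{\mu+1,\dots,N-1\}$ is an erratum for $\{\nu,\dots,N-1\}$: the latter is what the paper's proof establishes, and it is all the sequel uses --- the matrix $M_N(x)$ in Theorem~\ref{th:nscJN} involves only the columns $s_{i,n}$ with $\nu\leq n\leq N-1$. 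With the corrected range, your proof is complete and coincides with the paper's; with the literal range, no amount of bookkeeping will close the gap.
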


\begin{proof}. \case{a}$\,\Rightarrow\,$\case{b}. There exist $\alpha$, $\varPsi$ and $\varPhi$, as in Proposition~\ref{prop:premain}. Since the degree of $\varPhi$ is at most $\nu-1$, we have $D_r(\varPsi(x)y)(\xi)=D_r(\alpha)(\xi )=0$ for all $r\in \{\nu, \dots, N-1\}$. 

\case{b}$\,\Rightarrow\,$\case{a}. By \case{b} of Lemma~\ref{lem:basicHS}, there exists a unique polynomial $\varPhi$ of degree at most $\nu-1$ such that $D_r\varPhi(\xi )=-D_r(\varPsi y)(\xi )$ for all $r\in \{0, 1, \dots, \nu-1\}$. Set $\alpha=\varPsi(x)y+\varPhi(x)$. Then, by Proposition~\ref{prop:premain}, the divisor of $\alpha$ equals $N([\xi]-[\infty])$.  Hence $\alpha\in J[N]$. Also, $\xi \notin J[2]$, since if $\xi\in J[2]$, $\alpha$ would be a constant multiple of $(x-x_0)^{N/2}$. But since $\varPsi(x)\neq 0$, $\alpha$ is not invariant under the hyperelliptic involution, whereas $x-x_0$ is. 
\end{proof}

Let $M_{N}(x)=\sigma(\sM_N(\sx))$, where $\sigma$ is applied to each entry of $\sM_N(\sx)$ and $\sx$ is replaced by $x$. Thus the entries of $M_N(x)$ are elements of $k[x]$. Since $M_N(x)$ has $\mu+1$ rows and $\mu+g$ columns and since $g\geq 1$, its rank is at most $\mu +1$ and we shall see in a moment that this is the actual rank.

\begin{theorem}\label{th:nscJN}
Let $x_0\in k$, let $\xi \in X-\{\infty\}$, $\xi=(x_0,y_0)$ and let $N\geq 2g+1$. The following are equivalent.

\case{a} $\xi \in \tilde{J}[N]$, 

\case{b} The matrix $M_{N}(x_0)$ does not have maximal rank $\mu+1$.
\end{theorem}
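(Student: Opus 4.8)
The plan is to deduce Theorem~\ref{th:nscJN} from Theorem~\ref{th:mainth} by reinterpreting condition~\case{b} of the latter as a linear-algebra statement about the matrix $M_N(x_0)$. First I would recall that, by Theorem~\ref{th:mainth}, $\xi=(x_0,y_0)\in\tilde{J}[N]$ if and only if there is a \emph{nonzero} polynomial $\varPsi(x)\in k[x]$ of degree at most $\mu$ with $D_r(\varPsi(x)y)(\xi)=0$ for all $r\in\{\nu,\dots,N-1\}$. (Note the range: since $\nu=\lfloor N/2\rfloor+1\geq \mu+1$ by Lemma~\ref{lem:not}, one can take the vanishing conditions starting at $r=\nu$ rather than $r=\mu+1$; I would check that the proof of Theorem~\ref{th:mainth} actually delivers vanishing on $\{\nu,\dots,N-1\}$, which is what matches the rows of $\sM_N$.) Writing $\varPsi(x)=\sum_{i=0}^{\mu}c_i x^i$ with $c=(c_0,\dots,c_\mu)\in k^{\mu+1}$, the key step is to expand each $D_r(\varPsi(x)y)(\xi)$ linearly in the $c_i$ and identify the resulting coefficients with the entries of $M_N(x_0)$.

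The heart of the argument is this identification. By $\Lambda$-linearity of the Hasse–Schmidt operators and the definition $s_{i,n}(x)=\sigma(\myss_{i,n}(\sx))$, I would show that
\begin{equation*}
D_r(\varPsi(x)y)(\xi)=\sum_{i=0}^{\mu}c_i\,D_r(x^i y)(\xi)=\frac{1}{(2y_0+Q(x_0))^{2r-1}}\sum_{i=0}^{\mu}c_i\bigl(s_{i,r}(x_0)+t_{i,r}(x_0)y_0\bigr),
\end{equation*}
using part~\case{a} of Lemma~\ref{lem:degbound} transported to $L$ via $\sigma$. Because $N\geq 2g+1$ forces $F(x_0)\neq 0$ (equivalently $2y_0+Q(x_0)\neq 0$, by Lemma~\ref{lem:first}~\case{e}), the scalar denominator is a unit and may be discarded. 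For the range $r\geq\nu>i+g$ (which holds since $i\leq\mu<\nu$ and $\nu-\mu\in\{g+1,g+2\}$), part~\case{b} of Lemma~\ref{lem:degbound} gives $t_{i,r}=0$ and $s_{i,r}=u_{i,r}$, so the $y_0$-term drops out and the condition becomes purely polynomial in $x_0$. Hence for each $r\in\{\nu,\dots,N-1\}$ the vanishing $D_r(\varPsi(x)y)(\xi)=0$ is equivalent to $\sum_{i=0}^{\mu}c_i\,s_{i,r}(x_0)=0$, and stacking these over the $N-\nu=\mu+g-1$ values of $r$ shows that the system is exactly $M_N(x_0)^{\mathsf T}c=0$, since column $r-\nu+1$ of $M_N(x_0)$ has entries $s_{i,r}(x_0)$.

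It then remains to translate the existence of a nonzero $c$ into a rank statement. The column vector $c\in k^{\mu+1}$ lies in the left kernel of $M_N(x_0)$, i.e.\ $c^{\mathsf T}M_N(x_0)=0$; a nonzero such $c$ exists precisely when the $\mu+1$ rows of $M_N(x_0)$ are linearly dependent, that is, when $\operatorname{rank}M_N(x_0)<\mu+1$. Combining this with Theorem~\ref{th:mainth} yields the equivalence of \case{a} and \case{b}. Two loose ends I would nail down: first, that over the universal ring the matrix $\sM_N(\sx)$ has maximal rank $\mu+1$ (this follows from Proposition~\ref{prop:valleadcoeffs}, whose leading coefficient of $\varGamma_N$ is a nonzero element of $\Lambda_\bQ$, so the leftmost subdeterminant $\Gamma_N(\sx)$ is not the zero polynomial); and second, the precise matching of row/column indices so that a dependence among rows—rather than columns—is what corresponds to a nonzero $\varPsi$.

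The main obstacle I anticipate is not conceptual but bookkeeping: one must get the index ranges exactly right, confirming that the relevant vanishing range is $\{\nu,\dots,N-1\}$ (matching the $\mu+g-1$ columns of $\sM_N$) and that within this range Lemma~\ref{lem:degbound}~\case{b} genuinely applies to every entry, so that the $y_0$-dependence vanishes and the criterion is polynomial in $x_0$ alone. The nondegeneracy input $F(x_0)\neq 0$ is what makes the common denominator harmless, and the nonvanishing of $\Gamma_N(\sx)$ over $\Lambda_\bQ$ is what guarantees generic maximal rank; verifying these two facts carefully is the only delicate part.
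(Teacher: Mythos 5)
Your argument is correct and is essentially the paper's own proof: both directions are obtained from Theorem~\ref{th:mainth} by expanding $D_r(\varPsi(x)y)(\xi)$ linearly in the coefficients of $\varPsi$ (the $y_0$-terms dropping out since $r\geq\nu>i+g$, and the denominator $(2y_0+Q(x_0))^{2r-1}$ being a unit) and identifying the vanishing conditions with the left-kernel equation $c^{\mathsf T}M_N(x_0)=0$, so that a nonzero $\varPsi$ exists precisely when $\operatorname{rank}M_N(x_0)<\mu+1$. The only slip is cosmetic: the range $\{\nu,\dots,N-1\}$ has $N-\nu=\mu+g$ elements by Lemma~\ref{lem:not}, matching the $\mu+g$ columns of $M_N(x)$, not $\mu+g-1$.
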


\begin{proof}
\case{a}$\,\Rightarrow\,$\case{b}. Let $\varPsi$ be above, and write $\varPsi(x)=\sum_{i=0}^{\mu}b_ix^i$, where $(b_0,b_1,\dots, b_\mu)\in k^{\mu+1}$ is not $(0,0,\dots, 0)$. Then since $D_n(\varPsi(x)y)(\xi)=0$ for all $n\in \{\nu, \dots, N-1\}$, we have $r_{\varPsi,n}(x_0)=0$ for these $n$, which implies that
\begin{equation*}
\begin{pmatrix} b_0 & b_1 &  \cdots & b_\mu\end{pmatrix}M_{N}(x_0)=0,
\end{equation*}
and so $M_{N}(x_0)$ is not of maximal rank. 

\case{b}$\,\Rightarrow\,$\case{a}. If $M_{N}(x_0)$ is not of maximal rank, there exists a non-zero solution $(b_0,b_1,\dots, b_\mu)\in k^{\mu+1}$ of $\begin{pmatrix} b_0 & b_1 &  \cdots & b_\mu\end{pmatrix}M_{N}(x_0)=0$. Set $\varPsi(x)=\sum_{i=0}^{\mu}b_ix^i$. Then $r_{\varPsi,n}(x_0)=0$ for all $n\in \{\nu, \dots, N-1\}$ and using the previous theorem, we see that $\xi \in \tilde{J}[N]$.
\end{proof}

\begin{cor}\label{cor:maxrank}
The matrix $M_{N}(x)$ with polynomial coefficients has maximal rank $\mu+1$.
\end{cor}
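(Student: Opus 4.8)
The plan is to deduce Corollary~\ref{cor:maxrank} from Theorem~\ref{th:nscJN} by a contrapositive, generic-point argument. The key observation is that $M_N(x)$ is a matrix with entries in $k[x]$; saying it has maximal rank $\mu+1$ as a polynomial matrix means that \emph{some} $(\mu+1)\times(\mu+1)$ subdeterminant is a nonzero element of $k[x]$, equivalently that $M_N(x_0)$ has rank $\mu+1$ for all but finitely many $x_0\in k$. So the statement to prove is simply that the set of $x_0$ at which the rank drops is not all of $k$. Since $k$ is algebraically closed and hence infinite, it suffices to exhibit infinitely many $x_0\in k$ at which $M_N(x_0)$ has maximal rank.

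First I would invoke Theorem~\ref{th:nscJN}: for $x_0\in k$ with $\xi=(x_0,y_0)\in X-\{\infty\}$, the matrix $M_N(x_0)$ fails to have maximal rank if and only if $\xi\in\tilde J[N]$. Thus $M_N(x_0)$ drops rank only at $x$-coordinates of points lying in $\tilde J[N]=J[N]-J[2]$. The crucial input is that $J[N]$ is a \emph{finite} set: indeed $J[N]\subseteq J[N']$ where $N'$ is the prime-to-$p$ part times the appropriate power of $p$, and in any case the group of torsion points of bounded order on an abelian variety over an algebraically closed field is finite (for the prime-to-$p$ part, $J[m]\cong(\bZ/m)^{2g}$; the $p$-primary part is also finite). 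Consequently $X\cap\tilde J[N]$ is finite, so only finitely many $x_0\in k$ arise as the $x$-coordinate of a point of $\tilde J[N]$. One must also account for the finitely many $x_0$ with $F(x_0)=0$ (the $x$-coordinates of the $2$-torsion points, by Lemma~\ref{lem:first}\,\case{e}), at which Theorem~\ref{th:nscJN} does not directly apply, but these form a finite exceptional set as well.

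Putting this together: for every $x_0\in k$ outside the finite set consisting of the $x$-coordinates of points in $X\cap\tilde J[N]$ together with the roots of $F(x)$, there is a point $\xi=(x_0,y_0)\in X-\{\infty\}$ with $\xi\notin\tilde J[N]$ and $F(x_0)\neq 0$, and Theorem~\ref{th:nscJN} gives that $M_N(x_0)$ has maximal rank $\mu+1$. Since $k$ is infinite, such $x_0$ exist (in fact all but finitely many $x_0$ work), so at least one $(\mu+1)\times(\mu+1)$ subdeterminant of $M_N(x)$ is nonzero at some point and therefore is a nonzero polynomial. Hence $M_N(x)$ has rank $\mu+1$ over the function field $k(x)$, which is the assertion.

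I do not expect any genuine obstacle here; the content is entirely in Theorem~\ref{th:nscJN}, and the corollary is a soft finiteness argument on top of it. The only point requiring a moment's care is bookkeeping the exceptional locus correctly — making sure the rank-drop set is contained in a finite set by combining the finiteness of $X\cap\tilde J[N]$ with the finiteness of the ramification locus $\{F(x)=0\}$ — and noting that maximal rank of a polynomial matrix is equivalent to nonvanishing of some subdeterminant as a polynomial, which is what is actually used in the subsequent sections when passing to the gcd of the subdeterminants.
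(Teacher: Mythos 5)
Your argument is essentially identical to the paper's: the paper also deduces the corollary from Theorem~\ref{th:nscJN} by noting that if the polynomial matrix failed to have maximal rank then $M_N(x_0)$ would drop rank for every $x_0\in k$, contradicting the finiteness of $J[N]$ together with the infinitude of the algebraically closed field $k$. Your extra bookkeeping of the locus $F(x_0)=0$ is harmless but not needed, since Theorem~\ref{th:nscJN} as stated carries no such hypothesis.
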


\begin{proof}
If not, $M_{N}(x_0)$ would not have maximal rank for all $x_0\in k$. Since $k$ is infinite and $J[N]$ is a finite set, this contradicts the Theorem.
\end{proof}

Let $\tilde{U}^0_{N}(x)$ denote the gcd of the polynomials $\varPi_{j,N}(x)$ as $j$ runs over $\cS_0$.

\begin{cor} \label{cor:UfN} If $N\geq 2g+1$, then $\tilde{U}_{N}(x)$ is equal to the radical of the prime-to-$F(x)$ part of $\tilde{U}^0_{N}(x)$.
\end{cor}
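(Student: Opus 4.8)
The plan is to unwind the definitions and combine the characterization of $\tilde{J}[N]$ via ranks (Theorem~\ref{th:nscJN}) with the structure of the subdeterminants. Recall that for $x_0\in k$ with $F(x_0)\neq 0$, the point $\xi=(x_0,y_0)$ lies in $\tilde{J}[N]$ if and only if $M_N(x_0)$ fails to have maximal rank $\mu+1$; and a matrix with $\mu+1$ rows and $\mu+g$ columns fails to have maximal rank precisely when \emph{all} of its $(\mu+1)\times(\mu+1)$ subdeterminants vanish. The subdeterminants are exactly the $\det\varSigma_j(x)$ for $j\in\cS_0$. Hence, at a value $x_0$ with $F(x_0)\neq 0$, we have $x_0\in\tilde{\cT}_N$ if and only if $\det\varSigma_j(x_0)=0$ for all $j\in\cS_0$, which is to say $x_0$ is a common root of all the $\det\varSigma_j(x)$.

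Next I would pass from the $\det\varSigma_j$ to the $\varPi_{j,N}$. By Proposition~\ref{prop:powfxdivSigma} (applied via $\sigma$), we have $\det\varSigma_j(x)=F(x)^{\mu(\mu+1)/2}\,\varPi_{j,N}(x)$. Since we have restricted to $x_0$ with $F(x_0)\neq 0$, the factor $F(x)^{\mu(\mu+1)/2}$ is harmless: for such $x_0$, $\det\varSigma_j(x_0)=0$ if and only if $\varPi_{j,N}(x_0)=0$. Therefore $x_0\in\tilde{\cT}_N$ (with $F(x_0)\neq 0$) if and only if $x_0$ is a common root of all the $\varPi_{j,N}(x)$, i.e.\ a root of their gcd $\tilde U^0_N(x)$, that does not come from a root of $F$. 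This identifies the set $\tilde{\cT}_N$ with the roots of the prime-to-$F(x)$ part of $\tilde U^0_N(x)$.

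Finally I would match multiplicities. By definition $\tilde U_N(x)=\prod_{x_0\in\tilde{\cT}_N}(x-x_0)$ is squarefree (each root appears with multiplicity one), so the polynomial we want is the \emph{radical} of the prime-to-$F$ part of $\tilde U^0_N(x)$, not that part itself; this is why both operations — removing $F$-factors and taking the radical — appear in the statement. To complete the argument one must check that the roots of the prime-to-$F$ part of $\tilde U^0_N$ are exactly the elements of $\tilde{\cT}_N$ as a set (which is the equivalence just established) and that no extraneous roots are introduced: a root of $\tilde U^0_N$ coprime to $F$ is a genuine $x$-coordinate $x_0$ with $F(x_0)\neq 0$, and by Theorem~\ref{th:nscJN} the corresponding points $(x_0,\pm)$ lie in $\tilde{J}[N]$, so $x_0\in\tilde{\cT}_N$.

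The main obstacle I anticipate is bookkeeping around the places $x_0$ with $F(x_0)=0$, i.e.\ the Weierstrass points lying in $J[2]$. Here the local setup breaks down: $\cO=k[x,y][1/F(x)]$ does not embed in $k[[x-x_0]]$, so Theorem~\ref{th:nscJN}'s rank criterion was only established for $F(x_0)\neq 0$, and one must argue that discarding the $F$-part of $\tilde U^0_N$ correctly excises precisely the spurious contributions (points of order dividing $2$, which are excluded from $\tilde J[N]=J[N]-J[2]$ by definition of the set $\tilde{\cT}_N$). The subtlety is that $\tilde U^0_N$ might \emph{a priori} share factors with $F$ for reasons unrelated to $2$-torsion; removing the prime-to-$F$ part sidesteps this cleanly, but one should confirm that no point of $\tilde{\cT}_N$ is accidentally killed in the process — equivalently, that the $x$-coordinate of a point of exact order $N\geq 2g+1$ never satisfies $F(x_0)=0$, which follows from the fact that such a point is not fixed by $\iota$ (Lemma~\ref{lem:first}~\case{e}).
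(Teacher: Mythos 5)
Your proposal is correct and follows essentially the same route as the paper: the paper's (much terser) proof likewise combines Theorem~\ref{th:nscJN} with the identity $\varPi_{j,N}(x)=\det{\varSigma_j(x)}/F(x)^{\mu(\mu+1)/2}$, citing Corollary~\ref{cor:maxrank} to ensure some $\det{\varSigma_j(x)}$ is nonzero (a point you reach implicitly via the finiteness of $J[N]$). Your added care about roots of $F$ — that no $x_0\in\tilde{\cT}_N$ can satisfy $F(x_0)=0$ because $\tilde{J}[N]$ excludes $J[2]$ — is exactly the right justification for why passing to the prime-to-$F$ part loses nothing.
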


\begin{proof}
By Corollary~\ref{cor:maxrank},  $\det{\varSigma_j(x)}\neq 0$ for at least one $j\in \cS_0$. The result therefore follows from Theorem~\ref{th:nscJN} and the fact that $\varPi_{j,N}(x)=\det{\varSigma_j(x)}/F(x)^{\mu(\mu+1)/2}$.   
\end{proof}

\begin{theorem}\label{th:genbound} Let $j\in \cS_0$.

\case{a} Let $x_0\in k$ with $F(x_0)\neq 0$. If $(x_0,y_0)\in \tilde{J}[N]$, then $\varPi_{j,N}$ has a zero of order at least $N-j_{\mu+1}$ at $x_0$.

\case{b} Suppose $\varPi_{j,N}(x)\neq 0$. Then $(N-j_{\mu+1})\sharp\, (X\cap \tilde{J}[N]) \leq  4g\sum_{\ell=1}^{\mu+1}j_\ell-2g\mu(\mu+1)$.
\end{theorem}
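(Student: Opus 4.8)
Looking at Theorem~\ref{th:genbound}, I need to prove two things: a local vanishing statement (part a) and a global counting bound (part b) that follows from it.

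=== MY PROOF PLAN ===

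\textbf{The plan for part \case{a}.} The goal is to show that $\varPi_{j,N}(x)$ vanishes to high order at any $x_0$ with $F(x_0)\neq 0$ corresponding to a point $(x_0,y_0)\in\tilde{J}[N]$. First I would invoke Corollary~\ref{cor:pidet}, which expresses $\varPi_{j,N}(x)=\det{\varSigma_j(x)}/F(x)^{\mu(\mu+1)/2}$ as $\det{\varSigma_j(x)}$ (applying $\sigma$), where the submatrix $\varSigma_j$ has entries $s_{j_r-i}(x)$ for $i=0,\dots,\mu$. Since $F(x_0)\neq 0$, the point $\xi$ is not in $J[2]$, and the local ring $\cO$ embeds in $k[[x-x_0]]$, so that $D$ acts via Hasse--Schmidt derivatives. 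By Theorem~\ref{th:mainth}, membership $\xi\in\tilde{J}[N]$ gives a nonzero $\varPsi$ of degree $\leq\mu$ with $D_r(\varPsi(x)y)(\xi)=0$ for all $r\in\{\mu+1,\dots,N-1\}$. The idea is that this single vanishing condition forces \emph{many} of the $s$-polynomials, and hence linear combinations of the columns of $\varSigma_j$, to vanish to high order at $x_0$.

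\textbf{The key local computation.} Writing $\varPsi(x)=\sum b_i x^i$, the vanishing $D_r(\varPsi(x)y)(\xi)=0$ translates (after clearing the factor $(2y+Q(x))^{2r-1}$, which is a unit at $x_0$) into $\sum_i b_i\, s_{i,r}(x_0)=0$, i.e.\ the row vector $(b_0,\dots,b_\mu)$ annihilates column $r-\nu+1$ of $M_N(x_0)$ for each such $r$. More refined, I would use the relation $\sr_{\varPsi,r}(x)=(2y+Q(x))^{2r-1}D_r(\varPsi(x)y)$ and track the order of vanishing at $x_0$: since $\varPsi(x)y$ has a zero of order $N$ at $\xi$ (this is really where $\alpha$ from Proposition~\ref{prop:premain} enters, the combination $\varPsi y+\varPhi$ vanishing to order $N$), the Hasse--Schmidt derivatives $D_r$ of order $r$ have their values controlled so that the corresponding combinations of the $s_{i,m}$ vanish to order $\geq N-m$ at $x_0$ for each column index $m$. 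Forming $\det{\varSigma_j}$ via a cofactor/exterior-algebra argument along the row indexed by $\varPsi$, the determinant inherits a zero of order at least $N-j_{\mu+1}$, the bottleneck coming from the largest column index $j_{\mu+1}$. The main obstacle will be bookkeeping the exact order of vanishing contributed by each column and confirming that the minimal contribution $N-j_{\mu+1}$ is what survives the determinant expansion; I expect this to require the Leibniz-type identity for $D_n$ together with Lemma~\ref{lem:basicHS}\case{d} characterizing vanishing order via $(x-x_0)$-adic valuation.

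\textbf{The plan for part \case{b}.} This is a counting argument. Assuming $\varPi_{j,N}(x)\neq 0$, I would sum the local contributions from part \case{a} over all points in $X\cap\tilde{J}[N]$. Because points of order $N$ come in $\iota$-pairs with the same $x$-coordinate (as noted before the definition of $\tilde{\cT}_N$), the number of distinct $x_0$-values is $\tfrac12\sharp(X\cap\tilde{J}[N])$; each such $x_0$ (automatically satisfying $F(x_0)\neq 0$ since $N\geq 2g+1$ forces these points off $J[2]$ by Lemma~\ref{lem:first}\case{e}) contributes a zero of order $\geq N-j_{\mu+1}$ to $\varPi_{j,N}$. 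Hence
\begin{equation*}
(N-j_{\mu+1})\cdot\tfrac12\sharp\,(X\cap\tilde{J}[N])\leq\deg\varPi_{j,N}(x).
\end{equation*}
Finally I would bound $\deg\varPi_{j,N}(x)$ using Corollary~\ref{cor:degPij}, which gives $\deg\varPi_{j,N}\leq 2g\sum_{\ell=1}^{\mu+1}j_\ell-g\mu(\mu+1)$. Multiplying through by $2$ yields exactly the claimed inequality $(N-j_{\mu+1})\sharp\,(X\cap\tilde{J}[N])\leq 4g\sum_{\ell=1}^{\mu+1}j_\ell-2g\mu(\mu+1)$. The only subtlety here is ensuring the zeros at distinct $x_0$ are genuinely independent (which holds since distinct $x$-coordinates give coprime factors $(x-x_0)$) and that the factor-of-two from the $\iota$-pairing is applied correctly.
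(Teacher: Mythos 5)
Your proposal follows essentially the same route as the paper: part \case{a} is proved by replacing a row of $\varSigma_j(x)$ with the combination $\sum_i b_i L_i$ coming from the coefficients of $\varPsi$, identifying the resulting entries with $(D_{j_\ell}\alpha)(2y+Q(x))^{2j_\ell-1}$ (valid since $j_\ell\geq\nu$ kills the $\varPhi$ contribution), and expanding the determinant along that row so each term picks up a zero of order at least $\min_\ell(N-j_\ell)=N-j_{\mu+1}$; part \case{b} is then the same count over the $\tfrac12\sharp(X\cap\tilde{J}[N])$ distinct $x$-coordinates against the degree bound of Corollary~\ref{cor:degPij}. The only quibble is a notational conflation of $\varSigma_j$ (entries $s_{i,j_\ell}$) with $\sS_j$ (entries $s_{j_\ell-i}$), which is harmless since $F(x_0)\neq0$ makes the two determinants vanish to the same order at $x_0$.
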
 

\begin{proof}  
\case{a} Let $\alpha$, $\varPsi(x)$ be as in the proof of Theorem~\ref{th:mainth}. Recall that $\varPsi\neq 0$, so denote its degree by $\delta$ and write $\varPsi(x) =b_0+b_1x+\cdots  +b_\delta x^\delta$, $b_\delta\neq 0$.  Let $L_i$ denote the $i^{\text{th}}$ row of $\varSigma_j(x)$ and $\varSigma'_j(x)$ denote the matrix obtained from $\varSigma_j(x)$ by replacing row $\delta+1$ by $L'_\delta:=b_0L_0+b_1L_1+\cdots   +b_\delta L_\delta$. Then  $\det{\varSigma_j(x)}=b_\delta \det{\varSigma'_j(x)}$. But the entry of $L'_\delta$ in the $\ell^{\text{th}}$ column is
\begin{equation*}
b_0s_{0,j_\ell}+b_1s_{1,j_\ell}+\cdots + b_\delta s_{\delta,j_\ell}=(D_{j_\ell}\Psi(x)y)(2y+Q(x))^{2j_\ell-1}=(D_{j_\ell}\alpha)(2y+Q(x))^{2j_\ell-1}
\end{equation*}
since $j_\ell \geq \nu$. This has a zero of order at least $N-j_\ell$ at $x_0$. Expanding $\det{\varSigma'_j(x)}$ by row $\delta+1$ then shows that $\det{\varSigma'_j(x)}$ has a zero of order at least $\min_{1\leq \ell\leq \mu+1}{\{N-j_\ell\}}=N-j_{\mu+1}$ at $x_0$. Hence the same is true of $\det{\varSigma_j(x)}$.

\case{b} This follows from \case{a} and Corollary~\ref{cor:degPij}, on recalling that each $x_0\in \tilde{\cT}_N$ corresponds to two points $(x_0,y_0)$, $(x_0,-y_0-Q(x_0))\in \tilde{J}[N]$. 
\end{proof}

Corollary~\ref{cor:maxrank} and Theorem~\ref{th:genbound} should be viewed as the central results of this paper, since they imply a general bound for $\sharp\, (X\cap \tilde{J}[N])$.   We summarize this in the following Corollary.

\begin{cor} \label{cor:bound}
With notation as in Theorem~\ref{th:genbound} define $\cS:=\{j\in \cS_0\mid  \varPi_{j,N}(x)\neq 0\}$.  Then $\cS$ is non empty, and
\begin{equation*}
\sharp\, (X\cap \tilde{J}[N])\leq \min_{(j_1,\dots,  j_{\mu+1})\in \cS}\frac{4g\sum_{\ell=1}^{\mu+1}j_\ell-2g\mu(\mu+1)}{N-j_{\mu+1}}. 
\end{equation*}
\end{cor}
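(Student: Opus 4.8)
The plan is to combine the two main results just established: the maximal-rank statement of Corollary~\ref{cor:maxrank} and the quantitative bound of Theorem~\ref{th:genbound}\case{b}. First I would verify that $\cS$ is nonempty. By Corollary~\ref{cor:maxrank}, the matrix $M_N(x)$ with polynomial entries has maximal rank $\mu+1$. This means that at least one of its $(\mu+1)\times(\mu+1)$ subdeterminants $\det{\varSigma_j(x)}$ is a nonzero polynomial. Since $\varPi_{j,N}(x)=\det{\varSigma_j(x)}/F(x)^{\mu(\mu+1)/2}$ and $F(x)\neq 0$, the corresponding $\varPi_{j,N}(x)$ is also nonzero, so the index $j$ for which $\det{\varSigma_j(x)}\neq 0$ lies in $\cS$. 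Hence $\cS\neq\emptyset$.

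Next, for each individual $j\in\cS$, Theorem~\ref{th:genbound}\case{b} directly gives the inequality
\begin{equation*}
(N-j_{\mu+1})\,\sharp\,(X\cap\tilde{J}[N])\leq 4g\sum_{\ell=1}^{\mu+1}j_\ell-2g\mu(\mu+1).
\end{equation*}
Because $j\in\cS_0$ satisfies $j_{\mu+1}\leq N-1$, we have $N-j_{\mu+1}\geq 1>0$, so I may divide through by $N-j_{\mu+1}$ to obtain
\begin{equation*}
\sharp\,(X\cap\tilde{J}[N])\leq \frac{4g\sum_{\ell=1}^{\mu+1}j_\ell-2g\mu(\mu+1)}{N-j_{\mu+1}}.
\end{equation*}
This holds for every $j\in\cS$, and the left-hand side is a fixed number independent of $j$, so it is bounded above by the minimum of the right-hand side over all $j\in\cS$, which is exactly the claimed inequality.

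There is no genuine obstacle here, as the Corollary is essentially a bookkeeping consequence of its two inputs; the only points requiring care are confirming that the denominator $N-j_{\mu+1}$ is strictly positive (guaranteed by the definition of $\cS_0$) and that $\cS$ is nonempty so that the minimum is taken over a nonempty set (guaranteed by Corollary~\ref{cor:maxrank}). The one subtlety worth flagging is that the nonvanishing of $\det{\varSigma_j(x)}$ provided by maximal rank must be transferred to nonvanishing of $\varPi_{j,N}(x)$; this is immediate since the two differ only by the nonzero factor $F(x)^{\mu(\mu+1)/2}$ in the function field, so the division preserves nonvanishing.
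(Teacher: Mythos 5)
Your proof is correct and follows exactly the paper's own argument: nonemptiness of $\cS$ from Corollary~\ref{cor:maxrank} (transferred to the $\varPi_{j,N}$ via the nonzero factor $F(x)^{\mu(\mu+1)/2}$), and the bound from Theorem~\ref{th:genbound}~\case{b} after dividing by the positive quantity $N-j_{\mu+1}$. The paper merely states these two steps in one line; your writeup fills in the same routine details.
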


\begin{proof}
The fact that $\cS$ is non-empty follows from Corollary~\ref{cor:maxrank} and the bound then follows from \case{b} of Theorem~\ref{th:genbound}. 
\end{proof}

\subsection{Best and worst cases, proofs of Theorem~\ref{th:helltorsionbound} and Proposition~\ref{prop:2g12}} \label{subsec:bestworst}The optimal situation occurs when $j=(\nu,\nu+1,\dots, \nu+\mu)$ belongs to  $\cS$, which leads to the leftmost subdeterminant of $M_N(x)$. Recall that by definition $\varDelta_{N}(x)=\varPi_{N}(x)$ for this choice of $j$.  

\begin{proof}[Proof of Theorem~\ref{th:helltorsionbound}]
When $j=(\nu,\nu+1,\dots, \nu+\mu)$, we have $j_{\mu+1}=N-g$, so $N-j_{\mu+1}=g$.  By Theorem~\ref{th:genbound}, $\tilde{U}_N(x)^g$ divides $\Delta_N(x)$ so \case{a} of Theorem~\ref{th:helltorsionbound} follows from Corollary~\ref{cor:degDelta}.  \case{b} follows at once from \case{a}.  To prove \case{c}, note first that by Lemma~\ref{lem:not} every term in the products appearing in the formula for the leading coefficient of $\varDelta_N(x)$ (see Corollary~\ref{cor:degDelta}) has absolute value at most $N$. It follows that this coefficient is divisible only by primes $p\leq N$. \end{proof}

The worst possible bound comes from the rightmost subdeterminant, which corresponds to the choice $j=(N-\mu-1, N-\mu,\dots,  N-1)$. Indeed, this choice of $j$ maximizes the numerator of the bound in Corollary~\ref{cor:bound} and the denominator is $N-j_{\mu+1}=1$.  It therefore results in a bound that applies even if $j\notin \cS$. Simplifying the numerator then gives the following Proposition.  

\begin{prop} \label{prop:badgenbound}
For all $g$, $N\geq 2g+1$ and for all hyperelliptic $X$ of genus $g$ embedded in their jacobian using a Weierstrass point as base point, we have
\begin{equation*}
\sharp\, (X\cap \tilde{J}[N]])\leq \begin{cases} g(N^2-(2g)^2) &\text{if $N$ is even}\\
g(N^2-(2g-1)^2) &\text{if $N$ is odd}.
\end{cases}
\end{equation*}
\end{prop}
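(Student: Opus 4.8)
The plan is to apply Corollary~\ref{cor:bound} with the single, explicit choice
\[
j=(N-\mu-1,\,N-\mu,\,\dots,\,N-1),
\]
the rightmost element of $\cS_0$. For this $j$ one has $j_{\mu+1}=N-1$, so the denominator $N-j_{\mu+1}$ equals $1$, which means the bound applies regardless of whether $j\in\cS$ (i.e.\ whether $\varPi_{j,N}(x)$ vanishes): if $j\in\cS$ the bound is a genuine instance of Corollary~\ref{cor:bound}, and if $j\notin\cS$ then we can simply replace it by whatever element of $\cS$ minimizes the right-hand side, which can only make the bound smaller since the numerator of this particular $j$ is the largest possible. (More cleanly: the numerator is an increasing function of each $j_\ell$, so any other $j'\in\cS$ has numerator at most that of the rightmost $j$ and denominator at least $1$, hence yields a bound no worse than the one computed for the rightmost $j$; but $\cS$ is nonempty by Corollary~\ref{cor:maxrank}.) First I would record that the numerator for the rightmost $j$ is
\[
4g\sum_{\ell=1}^{\mu+1}j_\ell-2g\mu(\mu+1)
=4g\sum_{i=0}^{\mu}(N-1-i)-2g\mu(\mu+1).
\]

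The second step is the routine simplification of this numerator. Since $\sum_{i=0}^{\mu}(N-1-i)=(\mu+1)(N-1)-\tfrac{\mu(\mu+1)}{2}$, the numerator becomes
\[
4g(\mu+1)(N-1)-2g\mu(\mu+1)-2g\mu(\mu+1)
=4g(\mu+1)(N-1)-4g\mu(\mu+1)
=4g(\mu+1)(N-1-\mu).
\]
With the denominator equal to $1$, Corollary~\ref{cor:bound} therefore gives
\[
\sharp\,(X\cap\tilde J[N])\le 4g(\mu+1)(N-1-\mu).
\]
The remaining step is to substitute $\mu=\lfloor\tfrac{N-2g-1}{2}\rfloor$ and split into the parity cases. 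When $N$ is odd, $\mu=\tfrac{N-2g-1}{2}$, so $\mu+1=\tfrac{N-2g+1}{2}$ and $N-1-\mu=\tfrac{N+2g-1}{2}$; multiplying gives $4g\cdot\tfrac{(N-2g+1)(N+2g-1)}{4}=g\bigl(N^2-(2g-1)^2\bigr)$, using $(N-(2g-1))(N+(2g-1))=N^2-(2g-1)^2$. When $N$ is even, $\mu=\tfrac{N-2g-2}{2}$, so $\mu+1=\tfrac{N-2g}{2}$ and $N-1-\mu=\tfrac{N+2g}{2}$; multiplying gives $4g\cdot\tfrac{(N-2g)(N+2g)}{4}=g\bigl(N^2-(2g)^2\bigr)$. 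These are exactly the two claimed bounds.

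The only genuinely delicate point is the justification that one may legitimately use the rightmost $j$ even when $\varPi_{j,N}(x)=0$, i.e.\ even when that $j\notin\cS$. The clean argument is the monotonicity observation above: the function $j\mapsto 4g\sum_\ell j_\ell-2g\mu(\mu+1)$ is maximized over $\cS_0$ at the rightmost index, and since $N-j_{\mu+1}\ge 1$ for every $j\in\cS_0$, every $j\in\cS$ gives a ratio bounded above by $4g\sum_\ell j_\ell^{\mathrm{(right)}}-2g\mu(\mu+1)$ divided by $1$; as $\cS\ne\varnothing$ by Corollary~\ref{cor:maxrank}, the minimum in Corollary~\ref{cor:bound} is thus at most this common upper bound. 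I expect this bookkeeping to be the main obstacle only in the sense of stating it carefully; the algebra of the two parity cases is entirely mechanical. Nothing beyond Corollary~\ref{cor:bound}, Corollary~\ref{cor:maxrank}, and the definitions of $\mu$ and $\nu$ is needed.
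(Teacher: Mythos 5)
Your proposal is correct and follows exactly the paper's own argument: the paper likewise takes the rightmost choice $j=(N-\mu-1,\dots,N-1)$, observes that it maximizes the numerator in Corollary~\ref{cor:bound} while the denominator is $N-j_{\mu+1}=1$ (so the bound holds even if this $j\notin\cS$), and simplifies. Your algebra in the two parity cases checks out and matches the special cases $N=2g+1$, $N=2g+2$ of Proposition~\ref{prop:2g12}.
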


The two bounds of Proposition~\ref{prop:2g12} are the cases $N=2g+1$ and $N=2g+2$ of Proposition~\ref{prop:badgenbound}.  Note that, for fixed $g$ this is weaker than Proposition~\ref{prop: generalbound} except when $N$ is close to $2g+1$ or when $p$ is purely inseparable for $X$ and $N-1$ is a sufficiently large power of $p$.

\section{Proof of Proposition~\ref{prop:divCantor}} \label{sec:Cantorprop} We recall the statement for the reader's convenience.

\begin{propnn}Let $N\geq 2g+1$ and let $r\in \{0,1,\dots ,2g-2\}$. Define 
\begin{equation*}
\ve_{r,g}=\left(g-\left\lfloor{\frac{r+1}{2}}\right\rfloor \right)\left(\left\lfloor{\frac{r}{2}}\right\rfloor+1    \right).
\end{equation*}
Then $\tilde{U}_{N}(x)^{\ve_{r,g}}$ divides $\varDelta_{N+r}(x)$. 
\end{propnn}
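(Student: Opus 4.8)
The plan is to fix $x_0\in\tilde{\cT}_N$, so a point $\xi=(x_0,y_0)\in\tilde{J}[N]$ with $F(x_0)\neq 0$, and to prove that $\varDelta_{N+r}(x)$ vanishes to order at least $\ve_{r,g}$ at $x_0$; since distinct points of $\tilde{\cT}_N$ give coprime linear factors and $\tilde{U}_N(x)=\prod_{x_0\in\tilde{\cT}_N}(x-x_0)$, this yields the divisibility by $\tilde{U}_N(x)^{\ve_{r,g}}$. Put $N'=N+r$, $\nu'=\nu_{N'}$, $\mu'=\mu_{N'}$. By definition $\varDelta_{N'}=\det\varSigma_{j'}/F^{\mu'(\mu'+1)/2}$ with $j'=(\nu',\dots,\nu'+\mu')$, so as $F(x_0)\neq 0$ one has $\operatorname{ord}_{x_0}\varDelta_{N'}=\operatorname{ord}_{x_0}\det\varSigma_{j'}$. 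I work in $k[[x-x_0]]$, where $2y+Q(x)$ is a unit, and use $s_{i,n}=(2y+Q)^{2n-1}D_n(x^iy)$ for the indices $n\geq\nu'>i+g$ that occur, so that the unit factors $(2y+Q)^{2n-1}$ along the columns do not affect orders of vanishing.

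By Proposition~\ref{prop:premain} there is $\alpha=\varPsi(x)y+\varPhi(x)$ of divisor $N([\xi]-[\infty])$, with $\varPsi\neq 0$, $\deg\varPsi\leq\mu_N$, $\deg\varPhi\leq\nu_N-1$, vanishing to order exactly $N$ at $\xi$. For $s=0,1,\dots,\lfloor r/2\rfloor$ the function $x^s\alpha$ has pole order $N+2s\leq N'$ at $\infty$, hence lies in $\cL(N'[\infty])$ and still vanishes to order $\geq N$ at $\xi$; its $y$-part $x^s\varPsi$ has degree $\leq\mu_N+\lfloor r/2\rfloor\leq\mu'$ and its polynomial part $x^s\varPhi$ degree $\leq\nu_N-1+\lfloor r/2\rfloor\leq\nu'-1$, the last inequalities coming from $\mu'-\mu_N\geq\lfloor r/2\rfloor$ and $\nu'-\nu_N\geq\lfloor r/2\rfloor$ (easy parity checks). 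Feeding the coefficient vector of $x^s\varPsi$ into the rows of $\varSigma_{j'}$ produces a row $R_s$ whose entry in column $n\in\{\nu',\dots,\nu'+\mu'\}$ is $(2y+Q)^{2n-1}D_n(x^s\alpha)$, since $D_n$ annihilates the polynomial part $x^s\varPhi$ (because $n\geq\nu'>\deg(x^s\varPhi)$). Replacing $\lfloor r/2\rfloor+1$ of the rows of $\varSigma_{j'}$ by the $R_s$ multiplies the determinant by a nonzero constant, a power of the leading coefficient of $\varPsi$, and so preserves $\operatorname{ord}_{x_0}$.

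The decisive step is to stagger these rows. The Hasse--Schmidt Leibniz rule gives $D_n(x^s\alpha)=\sum_{a=0}^{s}\binom{s}{a}x^{s-a}D_{n-a}\alpha$, so if $W_a$ denotes the row with column-$n$ entry $(2y+Q)^{2n-1}D_{n-a}\alpha$, then $R_s=\sum_{a=0}^{s}\binom{s}{a}x^{s-a}W_a$: a unipotent lower-triangular row transformation with entries in $k[x]$ and determinant $1$. Hence I may use $W_0,\dots,W_{\lfloor r/2\rfloor}$ as the special rows without changing $\operatorname{ord}_{x_0}\det\varSigma_{j'}$. Since $\alpha$ vanishes to order $\geq N$ and a Hasse--Schmidt derivative $D_m$ lowers the order of vanishing by at most $m$ (valid in every characteristic), the column-$n$ entry of $W_a$ vanishes at $x_0$ to order $\geq\max(0,N-n+a)$. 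Writing $n=\nu'+\ell$, the order of the determinant at $x_0$ is at least the minimum of $\sum_{a=0}^{\lfloor r/2\rfloor}\max(0,\,N-\nu'-\ell_a+a)$ over injective assignments $a\mapsto\ell_a$ of the special rows to distinct columns $\ell_a\in\{0,\dots,\mu'\}$. Assigning them to the $\lfloor r/2\rfloor+1$ rightmost columns makes every summand equal to $N-(\nu'+\mu')+\lfloor r/2\rfloor=g-\lfloor(r+1)/2\rfloor$, using $\nu'+\mu'=N'-g$ (Lemma~\ref{lem:not}); this is positive because $r\leq 2g-2$, and a one-line exchange argument shows the assignment is the minimizing one. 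The minimum is therefore $(\lfloor r/2\rfloor+1)(g-\lfloor(r+1)/2\rfloor)=\ve_{r,g}$, as required.

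I expect the heart of the difficulty to be exactly this staggering. Using the single function $\alpha$ reproves only the case $r=0$, namely the bound $\tilde{U}_N(x)^g\mid\varDelta_N(x)$ of Theorem~\ref{th:helltorsionbound}; and a naive transversal estimate applied directly to the rows $R_s$, which all share one and the same vanishing profile, undercounts and falls short of $\ve_{r,g}$. The gain comes from exploiting the $x$-multiplication structure through the Leibniz identity to replace $R_s$ by the rows $W_a$, whose profiles are shifted by $a$; it is precisely this shift that upgrades the transversal bound to the product $\ve_{r,g}$. The remaining points---that $x^s\varPhi$ is annihilated, that the row replacements contribute only nonzero constant determinantal factors, and the parity inequalities $\mu'-\mu_N,\nu'-\nu_N\geq\lfloor r/2\rfloor$---are routine bookkeeping.
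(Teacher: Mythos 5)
Your proof is correct. The underlying strategy is the paper's: localize at a fixed $x_0\in\tilde{\cT}_N$ with $F(x_0)\neq 0$, use the function $\alpha=\varPsi(x)y+\varPhi(x)$ of divisor $N([\xi]-[\infty])$ to manufacture $\lfloor r/2\rfloor+1$ special rows of $\varSigma_{j'}$ whose column-$n$ entries vanish to order at least $N+a-n$, and bound $\mathrm{ord}_{x_0}$ of the determinant by a transversal sum, which the assignment to the rightmost columns evaluates to $(\lfloor r/2\rfloor+1)(g-\lfloor(r+1)/2\rfloor)=\ve_{r,g}$. The genuine difference is in how the staggering is produced and how the count is organized. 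The paper multiplies $\alpha$ by $(x-x_0)^\ell$ for $\ell=0,\dots,q$ (and by an extra factor in the case $N$ even, $r$ odd), so that the $\ell$-th auxiliary function itself vanishes to order $N+\ell$; you multiply by $x^s$, which gains nothing at $x_0$, and recover the shift afterwards via the Hasse--Schmidt Leibniz identity, replacing the rows $R_s$ by the rows $W_a$ through a unipotent triangular substitution. The two families of special rows are in fact the same up to a unipotent change of rows, since the paper's rows are $\sum_a\binom{\ell}{a}(x-x_0)^{\ell-a}W_a$ while yours are $\sum_a\binom{s}{a}x^{s-a}W_a$; so the vanishing profiles obtained are identical. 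What your route buys is uniformity: one argument for all four parities of $(N,r)$, always with exactly $\lfloor r/2\rfloor+1$ special rows, and the optimality of the column assignment settled by a convexity/exchange argument, in place of the paper's four explicit matrices $(*)$, $(**)$, and the separate handling of the extra row (with its corner star) in the case $N$ even, $r$ odd. The bookkeeping you defer --- $\nu'-\nu_N\geq\lfloor r/2\rfloor$ and $\mu'-\mu_N\geq\lfloor r/2\rfloor$, the annihilation of $x^s\varPhi$ by $D_n$ for $n\geq\nu'$, the nonzero determinantal factor $\mathrm{lc}(\varPsi)^{\lfloor r/2\rfloor+1}$ from the row replacements, and the positivity $g-\lfloor(r+1)/2\rfloor>0$ for $r\leq 2g-2$ --- all checks out.
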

The case $r=0$ of this Proposition is simply \case{a} of Theorem~\ref{th:genbound} applied with $j=(\nu,\nu+1,\dots, \nu+\mu)$. Our proof of 
Proposition~\ref{prop:divCantor} is a generalization of the proof of this. Indeed, we could also generalize Theorem~\ref{th:genbound} to subdeterminants of $M_{N+r}(x)$ other than $\varGamma_{N}(x)$. Note however that when $r\geq 1$, then in view of Theorem~\ref{th:nscJN},  $\tilde{U}_N(x)$  cannot divide \emph{all} the subdeterminants of $M_{N+r}(x)$.

We exploit the following simple Lemma.

\begin{lemma} \label{lem:recMfN}  Let $N\geq 2g+1$ be an integer.

\case{a} If $N$ is odd,  the matrix $M_{N+1}(x)$ is obtained from $M_{N}(x)$ by suppressing the leftmost column and adding the column which is the transpose of $\begin{pmatrix} s_{0,N} & s_{1,N} & \dots & s_{\mu,N}  \end{pmatrix}$ on the right.

\case{b} If $N$ is even, then $M_{N+1}(x)$ is obtained from $M_{N}(x)$ by adding the row $\begin{pmatrix} s_{\mu+1,\nu}& s_{\mu+1,\nu+1}&\cdots  & s_{\mu+1, N}\end{pmatrix} $ at the bottom and the column defined as the transpose of $\begin{pmatrix} s_{0,N}& s_{1,N}& \cdots & s_{\mu+1,N}\end{pmatrix}$, on the right.

\case{c} In both cases, $M_{N+2}(x)$ is obtained from $M_{N}(x)$ by suppressing the leftmost column and adding the row $\begin{pmatrix} s_{\mu+1,\nu}& s_{\mu+1,\nu+1}&\cdots  &  s_{\mu+1,N+1}\end{pmatrix} $ at the bottom and two columns, the transpose of $\begin{pmatrix} s_{0,N}& s_{1,N}& \cdots & s_{\mu+1,N}\end{pmatrix}$ followed the transpose of $\begin{pmatrix} s_{0,N+1} & s_{1,N+1}& \cdots & s_{\mu+1,N+1}\end{pmatrix}$, on the right.
\end{lemma}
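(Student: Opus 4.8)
The plan is to exploit the fact that the individual entries $s_{i,m}(x)$ of the matrices $M_N(x)$ do not depend on $N$: by the definition in \S~\ref{par:univmat} together with Lemma~\ref{lem:degbound}, the entry $s_{i,m}=\sigma(\myss_{i,m})$ is determined by the pair $(i,m)$ alone, through the Hasse-Schmidt derivative $\sD_m(\sx^i\sy)$, and the integer $N$ enters \emph{only} through the ranges $0\leq i\leq \mu_N$ (rows) and $\nu_N\leq m\leq N-1$ (columns). Thus $M_N(x)$ is a rectangular ``window'' cut out of the single infinite array $(s_{i,m}(x))_{i\geq 0,\,m\geq 1}$, and passing from $M_N$ to $M_{N+1}$ or $M_{N+2}$ amounts to describing how this window moves. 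Everything then reduces to elementary arithmetic with the floor functions defining $\nu_N$ and $\mu_N$.

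First I would record the effect of increasing $N$ on the two parameters. Writing $\nu=\nu_N$ and $\mu=\mu_N$: when $N$ is odd one has $\nu_{N+1}=\nu+1$ and $\mu_{N+1}=\mu$; when $N$ is even one has $\nu_{N+1}=\nu$ and $\mu_{N+1}=\mu+1$; and in both cases $\nu_{N+2}=\nu+1$ and $\mu_{N+2}=\mu+1$. These are immediate from $\nu_N=\lfloor N/2\rfloor+1$ and $\mu_N=\lfloor (N-2g-1)/2\rfloor$, splitting according to the parity of $N$.

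For \case{a} ($N$ odd), the row range $0\leq i\leq\mu$ is unchanged, while the column range moves from $\{\nu,\dots,N-1\}$ to $\{\nu+1,\dots,N\}$; this is exactly the suppression of the leftmost column ($m=\nu$) and the adjunction on the right of the column with entries $s_{i,N}$, $0\leq i\leq\mu$. For \case{b} ($N$ even), the column range extends from $\{\nu,\dots,N-1\}$ to $\{\nu,\dots,N\}$ (adding the column $m=N$ on the right), while the row range grows from $0\leq i\leq\mu$ to $0\leq i\leq\mu+1$ (adding the bottom row $i=\mu+1$); reading off the new entries gives the displayed row and column. For \case{c} one may either compose the two steps or argue directly from $\nu_{N+2}=\nu+1$, $\mu_{N+2}=\mu+1$: the row range gains the bottom index $\mu+1$, and the column range moves from $\{\nu,\dots,N-1\}$ to $\{\nu+1,\dots,N+1\}$, i.e.\ the leftmost column is suppressed and the two columns $m=N,N+1$ are adjoined on the right, with entries $s_{i,N}$ and $s_{i,N+1}$, $0\leq i\leq\mu+1$.

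There is no real obstacle beyond keeping the index bookkeeping straight across the parity cases; the only mild subtlety is in \case{c}, where the displayed bottom row is written starting from $s_{\mu+1,\nu}$ rather than $s_{\mu+1,\nu+1}$. This reflects the harmless convention of first appending the full bottom row to $M_N$ (over the columns $m=\nu,\dots,N+1$) and only afterwards suppressing the leftmost column $m=\nu$; I would note this explicitly so that the final matrix agrees with $M_{N+2}(x)$ exactly.
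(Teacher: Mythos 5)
Your proposal is correct and follows essentially the same route as the paper: the paper's proof likewise just records how $\mu$ and $\nu$ change with the parity of $N$ (unchanged/increased by one) and observes that \case{a} and \case{b} are then immediate from the definition of $M_N(x)$, with \case{c} obtained by composing the two steps. Your version merely spells out the window-shifting bookkeeping, including the minor indexing convention in \case{c}, in more detail.
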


\begin{proof}
Note that, if $N$ is increased to $N+1$, $\mu$ is unchanged if $N$ is odd and increases by $1$ if $N$ is even, while $\nu$ is unchanged if $N$ is even and increases by one if $N$ is odd.  \case{a} and \case{b} are then obvious from the definition of $M_{N}(x)$. \case{c} follows from \case{a} and \case{b}. 
\end{proof}

\begin{proof}[Proof of Proposition~\ref{prop:divCantor}]
It suffices to show that $\tilde{U}_{N}(x)^{\ve_{r,g}}$ divides $\varGamma_{N+r}(x)$.  Let $x_0\in \tilde{\cT}_{N}$ and let $\alpha=\varPhi(x)+\varPsi(x)y$ have divisor $N[(x_0,y_0)]-N[\infty]$.    We consider several cases according to the parity of $N$ and $r$. 

\case{a} Suppose $N$ odd and $r$ even, $r=2q$ with $0\leq q\leq g-1$. Then  $\nu_{N+r}=\nu+q$ and $\mu_{N+r}=\mu+q$ so by Lemma~\ref{lem:not} $\mu_{N+r}+\nu_{N+r}=N-g+r$. Thus $\varGamma_{N+r}(x)$ is the determinant of the matrix
\begin{equation*}  M_{P,Q,N+r}^0(x)=
\begin{pmatrix}
s_{0,\nu+q} & s_{0,\nu+q+1} &    \cdots   &   s_{0,N-g+r} \\
s_{1,\nu+q} & s_{1,\nu+q+1} &    \cdots   &   s_{1,N-g+r} \\
\vdots         & \vdots              &    \ddots   &     \vdots        \\
s_{\mu+q,\nu+q} & s_{\mu+q,\nu+q+1} &  \cdots    & s_{\mu+q, N-g+r}
\end{pmatrix}.
\end{equation*}

Let $0\leq \ell\leq q$.  Since $N$ is odd, $\varPsi$ has degree $\mu$ and can be assumed monic.  Then $(x-x_0)^\ell\varPsi(x)$ has degree $\mu+\ell$ and, since $(x-x_0)^\ell\varPhi(x)$ has degree at most $\nu+\ell-1$ and $(x-x_0)^\ell \alpha$ has a zero of order $N+\ell$ at $x_0$,  $D_n((x-x_0)^\ell\alpha)=D_n((x-x_0)^\ell\varPsi(x)y)$ whenever $n\geq \nu+\ell$ and, if $\nu+\ell\leq n\leq N+q$, $D_n((x-x_0)^\ell\varPsi(x)y)$ has a zero of order at least $N+\ell-n$ at $x_0$.   Writing
\begin{equation*}
(x-x_0)^\ell\varPsi(x)=c_0^{(\ell)}+c_1^{(\ell)}x+\cdots + c_{\mu+\ell}^{(\ell)}x^{\mu+\ell}, \qquad c_{\mu+\ell}^{(\ell)}=1,
\end{equation*}
we find 
\begin{align*}
D_n(((x-&x_0)^\ell\varPsi(x)y)\\ 
&=\big(c_0^{(\ell)}s_{0,n}+c_1^{(\ell)}s_{1,n}+\cdots   + c_{\mu+\ell-1}^{(\ell)}s_{\mu+\ell-1,n}+s_{\mu+\ell,n}\big)(2y+Q(x))^{1-2n}
\end{align*}
for all $n$ such that $\nu+\ell\leq n\leq N+\ell$. Let $L_0$, $L_1$, \dots, $L_{\mu+q}$ be the rows of $M_{N+r}^0(x)$ from the top down. By the above, replacing $L_{\mu+\ell}$ by $c_0^{(\ell)}L_0+c_1^{(\ell)}L_1+\cdots  +c_{\mu+\ell-1}^{(\ell)}L_{\mu+\ell-1}+L_{\mu+\ell}$ does not change the determinant. Furthermore, starting from the left, the polynomials in this row have zeros of order at least $(N+\ell)-(\nu+q)=g+\mu+(\ell-q)$, $(N+\ell)-(\nu+q+1)=g+\mu+(\ell-q-1)$, \dots, the lower bound decreasing by one each time until either the rightmost column is attained or the row ends in a sequence of polynomials that in general do not vanish at $x_0$. Denoting by $\cP_d$ a polynomial with a zero of order at least $d$ at $x_0$ and  by a star a polynomial which in general does not vanish at $x_0$ we find that $\varGamma_{N+r}(x)$ is equal to the determinant of a matrix of the form
\begin{equation*} \tag*{$(*)$}
\begin{pmatrix}
s_{0,\nu+q} & s_{0,\nu+q+1} &    \cdots   &  s_{0,N-g+r-1} & s_{0,N-g+r} \\
s_{1,\nu+q} & s_{1,\nu+q+1} &    \cdots    &  s_{1,N-g+r-1} & s_{1,N-g+r} \\
\vdots         & \vdots              &    \ddots    &  \vdots   & \vdots \\
s_{\mu-1,\nu+q} & s_{\mu-1,\nu+q+1} &     \cdots &  s_{\mu-1,N-g+r-1} & s_{\mu-1,N-g+r} \\
\cP_{g+\mu-q}    &  \cP_{g+\mu-q-1}    &  \cdots       & \cP_{g-2q+1} &   \cP_{g-2q} \\    
\cP_{g+\mu-q+1}& \cP_{g+\mu-q}        &  \cdots    &   \cP_{g-2q+2} &    \cP_{g-2q+1} \\
  \vdots         & \vdots              &    \ddots    &  \vdots  & \vdots  \\
  \cP_{g+\mu-1}      &   \cP_{g+\mu-2}&  \cdots    &   \cP_{g-q} & \cP_{g-q-1}  \\ 
 \cP_{g+\mu}      &   \cP_{g+\mu-1}&  \cdots    &   \cP_{g-q+1} & \cP_{g-q}
\end{pmatrix}
\end{equation*}
when $q<\frac{g}{2}$ and of the form
\begin{equation*}  \tag*{$(**)$}  \hskip-13mm
\begin{pmatrix}
s_{0,\nu+q} & s_{0,\nu+q+1} &    \cdots   &   s_{0,\nu+r} &   s_{0,\nu+r+1} &  \cdots &  s_{0,N-g+r-1} & s_{0,N-g+r} \\
s_{1,\nu+q} & s_{1,\nu+q+1} &    \cdots   &   s_{1,\nu+r} &   s_{1,\nu+r+1} &  \cdots &  s_{1,N-g+r-1} & s_{1,N-g+r} \\
\vdots         & \vdots              &    \ddots   &     \vdots       & \vdots    &              \ddots  &  \vdots   & \vdots \\
s_{\mu-1,\nu+q} & s_{\mu-1,\nu+q+1} &    \cdots   &   s_{\mu-1,\nu+r} &   s_{\mu-1,\nu+r+1} &  \cdots &  s_{\mu-1,N-g+r-1} & s_{\mu-1,N-g+r} \\
\cP_{g+\mu-q}    &  \cP_{g+\mu-q-1}    &  \cdots    &   \cP_1       &   *   &  \cdots    & * &   * \\    
\cP_{g+\mu-q+1}& \cP_{g+\mu-q}        &  \cdots    &   \cP_2       &  \cP_1 &  \cdots  &  *&    * \\
 \vdots         & \vdots              &    \ddots   &     \vdots       & \vdots    &              \ddots  &  \vdots  & \vdots  \\
 \cP_{g+\mu-(g-q)}   &  \cP_{g+\mu-(g-q+1)}  &    \cdots   &     \cP_{\mu}         &    \cP_{\mu-1}     &            \cdots    & \cP_1 &   *     \\
\cP_{g+\mu-(g-q-1)}   &  \cP_{g+\mu-(g-q)}  &    \cdots   &     \cP_{\mu+1}         &    \cP_{\mu}     &            \cdots    & \cP_2 &   \cP_1     \\ 
  \vdots         & \vdots              &    \ddots   &     \vdots       & \vdots    &              \ddots  &  \vdots  & \vdots  \\
 \cP_{g+\mu-1}      &   \cP_{g+\mu-2}&  \cdots  &   \cP_{g+\mu-q-1} & \cP_{g+\mu-q-2} & \cdots  &   \cP_{g-q} & \cP_{g-q-1}  \\ 
 \cP_{g+\mu}      &   \cP_{g+\mu-1}&  \cdots  &   \cP_{g+\mu-q} & \cP_{g+\mu-q-1} & \cdots  &   \cP_{g-q+1} & \cP_{g-q}
\end{pmatrix}. 
\end{equation*}
when $q\geq \frac{g}{2}$. In both cases there are $q+1$ rows of polynomials $\cP_d$.

In the expansion of the determinant, the products that contribute the zeros of smallest order are those that contain subproducts that appear in the expansion of the determinant of the $(q+1)\times (q+1)$  block at the bottom right of the matrices, namely
\begin{equation*}
\begin{pmatrix} \cP_{g-q} & \cP_{g-q-1} & \cdots     & \cP_{g-2q} \\
\cP_{g-q+1} & \cP_{g-q}  &   \cdots     &   \cP_{g-2q+1}  \\
\vdots          &  \vdots      &   \ddots     &    \vdots            \\
\cP_{g}        &   \cP_{g-1} &  \cdots      &     \cP_{g-q} 
\end{pmatrix},
\end{equation*} 
when the large matrix has the form $(*)$ where some of the $\cP_{i}$'s in the upper right part of the matrix have to be replaced by stars when it has the form $(**)$. These subproducts all have zeros of order at least $(g-q)(q+1)$. Since $r=2q$, $\lfloor{\frac{r}{2}}\rfloor=\lfloor{\frac{r+1}{2}}\rfloor=q$, we have $(g-q)(q+1)=\ve_{r,g}$ and the result is proved in this case. 

\case{b} Suppose that $N$ is odd and $r$ is odd. Write $r=2q+1$, so that $0\leq q\leq g-2$.  Then the matrix $M_{N+r}^0(x)$ is obtained from  $M_{N+2q}^0(x)$ by suppressing the column on the left and adding a column on the right. Arguing as before, this means that $M_{N+r}^0(x)$ has the same determinant as a matrix of the form 
\begin{equation*}
\begin{pmatrix}
s_{0,\nu+q+1} & s_{0,\nu+q+2} &    \cdots   &  s_{0,N-g+r-1} & s_{0,N-g+r} \\
s_{1,\nu+q+1} & s_{1,\nu+q+2} &    \cdots    &  s_{1,N-g+r-1} & s_{1,N-g+r} \\
\vdots         & \vdots              &    \ddots    &  \vdots   & \vdots \\
s_{\mu-1,\nu+q+1} & s_{\mu-1,\nu+q+2} &     \cdots &  s_{\mu-1,N-g+r-1} & s_{\mu-1,N-g+r} \\
\cP_{g+\mu-q-1}    &  \cP_{g+\mu-q-2}    &  \cdots       & \cP_{g-2q} &   \cP_{g-2q-1} \\    
\cP_{g+\mu-q}& \cP_{g+\mu-q-1}        &  \cdots    &   \cP_{g-2q+1} &    \cP_{g-2q} \\
  \vdots         & \vdots              &    \ddots    &  \vdots  & \vdots  \\
  \cP_{g+\mu-2}      &   \cP_{g+\mu-3}&  \cdots    &   \cP_{g-q-1} & \cP_{g-q-2}  \\ 
 \cP_{g+\mu-1}      &   \cP_{g+\mu-2}&  \cdots    &   \cP_{g-q} & \cP_{g-q-1}
\end{pmatrix}
\end{equation*}
when $q<(g-1)/2$ and a matrix of a similar form to $(**)$ when $q\geq (g-1)/2$. Again there are $q+1$ rows of $\cP_d$'s and arguing as in case \case{a} we find that all terms in the expansion of the determinant have a zero of order at least $(g-q-1)(q+1)$. Since $r=2q+1$, $q+1=\lfloor{\frac{r+1}{2}}\rfloor=\lfloor{\frac{r}{2}}\rfloor+1$ so $(g-q-1)(q+1)=\ve_{r,g}$ and we again obtain the desired result. 

\case{c} Suppose that $N$ is even and $r$ is even and again write $r=2q$ with $0\leq q\leq g-1$. Then $\varPhi$ has degree $\nu-1$ . Let $\ell\in \{0,1,\dots , q\}$. Then $(x-x_0)^{\ell}\alpha$  has a zero of order $N+\ell$ at $x_0$, so again  $D_n(x-x_0)^{\ell}\alpha$ has a zero of order at least $N+\ell-n$ provided $n\leq N-\ell$. If $n\geq \nu+\ell$, then $D_n((x-x_0)^{\ell}\alpha)=D_n((x-x_0)^\ell\varPsi(x)y)$.  Also, $\varPsi$ has degree at most $\mu$. But by Lemma~\ref{lem:recMfN} it is non-zero so we may assume it is monic. Denote its degree by $\delta$, so $0\leq \delta\leq \mu$. Then 
\begin{equation*}
(x-x_0)^\ell\varPsi(x)=c_{0}^{(\ell)}+c_{1}^{(\ell)}x+\cdots + c_{\delta+\ell}^{(\ell)}x^{\delta+\ell}, \qquad c_{\delta+\ell}^{(\ell)}=1
\end{equation*}
so that
\begin{align*}
D_n((x-&x_0)^\ell\varPsi(x)y)\\
&=\big(c_{0}^{(\ell)}s_{0,n}+c_{1}^{(\ell)}s_{1,n}+\cdots + c_{\delta+\ell-1}^{(\ell)}s_{\delta+\ell-1,n}+s_{\delta+\ell,n}\big)(2y+Q(x))^{1-2n}
\end{align*}
for all $n$ such that $\nu+\ell\leq n\leq N+\ell$. 
We can thus apply a similar argument to that in case \case{a}.  This gives rise to a matrix of the form
\begin{equation*}
\begin{pmatrix}
s_{0,\nu+q} & s_{0,\nu+q+1} &    \cdots   &  s_{0,N-g+r-1} & s_{0,N-g+r} \\
s_{1,\nu+q} & s_{1,\nu+q+1} &    \cdots    &  s_{1,N-g+r-1} & s_{1,N-g+r} \\
\vdots         & \vdots              &    \ddots    &  \vdots   & \vdots \\
s_{\delta-1,\nu+q} & s_{\delta-1,\nu+q+1} &     \cdots &  s_{\delta-1,N-g+r-1} & s_{\delta-1,N-g+r} \\
\cP_{g+\mu-q}    &  \cP_{g+\mu-q-1}    &  \cdots       & \cP_{g-2q+1} &   \cP_{g-2q} \\    
\cP_{g+\mu-q+1}& \cP_{g+\mu-q}        &  \cdots    &   \cP_{g-2q+2} &    \cP_{g-2q+1} \\
  \vdots         & \vdots              &    \ddots    &  \vdots  & \vdots  \\
 \cP_{g+\mu-1}      &   \cP_{g+\mu-2}&  \cdots    &   \cP_{g-q} & \cP_{g-q-1}  \\ 
\cP_{g+\mu}      &  \cP_{g+\mu-1}&  \cdots    &   \cP_{g-q+1} & \cP_{g-q} \\
s_{\delta+1+q, \nu+q}&  s_{\delta+1+q, \nu+q+1} & \cdots &   s_{\delta+1+q, N-g+r-1} & s_{\delta+1+q, N-g+r} \\
\vdots         & \vdots              &    \ddots    &  \vdots  & \vdots  \\
s_{\mu+q, \nu+q}&  s_{\mu+q, \nu+q+1} & \cdots  & s_{\mu+q, N-g+r-1} & s_{\mu+q, N-g+r}
\end{pmatrix}
\end{equation*}
when $q\leq g/2$ and a matrix of a form similar to $(**)$ when $q>g/2$.  (Here the top block of $s_{i,n}$'s is empty if $\delta=0$ and the bottom block of $s_{i,n}$'s is empty if $\delta=\mu$.) 

Arguing with the block of $P_d$'s  as in \case{a} we find again there is a zero of order at least $(g-q)(q+1)=\ve_{r,g}$.

\case{d} Suppose that $N$ is even and $r$ is odd, $r=2q+1$ where $0\leq q\leq g-2$. This time $M_{N+r}^0(x)$ is obtained from $M_{N+2q}^0(x)$ by adding a row at the bottom and a column on the right.  Because of the extra row, we have to let $\ell$ vary from $0$ to $q+1$. Arguing as in case \case{c}, we see that $\varGamma_{N+r}$ is equal to the determinant of a matrix of the form
\begin{equation*}
\begin{pmatrix}
s_{0,\nu+q} & s_{0,\nu+q+1} &    \cdots   &  s_{0,N-g+r-1} & s_{0,N-g+r} \\
s_{1,\nu+q} & s_{1,\nu+q+1} &    \cdots    &  s_{1,N-g+r-1} & s_{1,N-g+r} \\
\vdots         & \vdots              &    \ddots    &  \vdots   & \vdots \\
s_{\delta-1,\nu+q} & s_{\delta-1,\nu+q+1} &     \cdots &  s_{\delta-1,N-g+r-1} & s_{\delta-1,N-g+r} \\
\cP_{g+\mu-q}    &  \cP_{g+\mu-q-1}    &  \cdots       & \cP_{g-2q} &   \cP_{g-2q-1} \\    
\cP_{g+\mu-q+1}& \cP_{g+\mu-q}        &  \cdots    &   \cP_{g-2q+1} &    \cP_{g-2q} \\
  \vdots         & \vdots              &    \ddots    &  \vdots  & \vdots  \\
 \cP_{g+\mu}      &   \cP_{g+\mu-1}&  \cdots    &   \cP_{g-q} & \cP_{g-q-1}  \\
 *     &   \cP_{g+\mu}&  \cdots    &   \cP_{g-q+1} & \cP_{g-q}  \\
 s_{\delta+1+q, \nu+q}&  s_{\delta+1+q, \nu+q+1} & \cdots &   s_{\delta+1+q, N-g+r-1} & s_{\delta+1+q, N-g+r} \\
\vdots         & \vdots              &    \ddots    &  \vdots  & \vdots  \\
s_{\mu+q, \nu+q}&  s_{\mu+q, \nu+q+1} & \cdots  & s_{\mu+q, N-g+r-1} & s_{\mu+q, N-g+r}
\end{pmatrix}
\end{equation*}
when $q<\frac{g+1}{2}$ and a matrix resembling $(**)$ when $q\geq \frac{g+1}{2}$.  Note the star in the bottom left corner of the block of $\cP_d$'s, which occurs because $(x-x_0)^{q+1}\varPhi(x)$ has degree $\nu+q$, so $D_{\nu+q}((x-x_0)^{q+1}\varPhi(x))\neq 0$. There are $q+2$ rows of $\cP_d$'s and one checks that when the determinant is expanded, the products that contribute the zeros of smallest order are those that contain subproducts occuring in the expansion of the $(q+1)\times (q+1)$ block whose entries on the principal diagonal are all $\cP_{g-q-1}$'s.  This again leads to an order of vanishing of at least $(g-q-1)(q+1)=\ve_{r,g}$. 
\end{proof}
 
\section{Examples}\label{sec:examples}  It is easy to program the preceding results on a computer, using Lemma~\ref{lem:Cmn} to compute the $s_{i,n}$'s and the matrix $M_N(x)$, or Corollary~\ref{cor:pidet} to compute the $\varPi_{j,N}$'s.  One can then deduce $\tilde{U}_{N}(x)$ from Corollary~\ref{cor:UfN}.  In this section we give a number of examples illustrating this. 

\begin{ex:cantor}  \label{ex:cantor} We consider the curve $y^2+y=x^5$, so that $g=2$, $P(x)=x^5$, $Q(x)=1$ and $F(x)=4x^5+1$ (see also \cite{Ca94}, page 143). We suppose $p\neq 5$ in order that $X_{P,Q}$ be smooth. Choose $N=5$. Then $\nu=3$ and $\mu=0$, so $\varDelta_{5}(x)=s_{3}(x)$ and
\begin{equation*}
M_{5}(x)=\begin{pmatrix} s_{3}(x) & s_{4}(x) \end{pmatrix} =\begin{pmatrix}  10x^2(x^5-1)^2 & -5x(x^5-1)(x^{10}-27x^5+1) \end{pmatrix}.
\end{equation*} 

Theorem~\ref{th:nscJN} implies that $x_0\in \tilde{\cT}_5$ if and only if $M_5(x_0)$ is the zero matrix.  If $p\neq 2$, this happens if and only if $x_0(x_0^5-1)=0$.  Therefore $\tilde{U}_{5}(x)=x(x^5-1)$ and $\sharp\, (X\cap \tilde{J}[5])=12$, as was already observed in \cite{BoGrLe01}. 

If $p=2$, then $\varDelta_5(x)=0$ and $s_4(x)$ simplifies to $x^{16}-x$. This polynomial has simple roots and is coprime to $F(x)$. We deduce from Theorem~\ref{th:nscJN} that $\tilde{U}_{5}(x)=x^{16}-x$.  Hence $\sharp\, (X\cap \tilde{J}[5])=32$, as was already observed in \cite{BoGrLe01}. 

Observe that $\tilde{U}_{5}(x)^2$ divides $\varDelta_5(x)$ as predicted by Proposition~\ref{prop:divCantor}.   
\end{ex:cantor}

\begin{ex:sixtorsion} \label{ex:sixtorsion} Next consider the curve $X=X_{P,0}$ where $P(x)=x^5-x$. Then $X$ is smooth if $p=0$ or $p\geq 5$.  Again, $g=2$. Let $N=6$.  We have $\nu=4$ and $\mu=0$, so $M_{6}(x)=\begin{pmatrix} s_{4}(x) & s_{5}(x) \end{pmatrix}$ and $\Delta_{6}(x)=\Gamma_{6}(x)=s_{4}(x)$. Writing  $\theta(x)=\left( {{x}^{4}}-2 {{x}^{3}}+2 {{x}^{2}}+2 x+1\right) \left( {{x}^{4}}+2 {{x}^{3}}+2 {{x}^{2}}-2 x+1\right)$, we find
\begin{equation*}
s_{4}(x)=-5\theta(x)^2, \qquad s_{5}(x)=  2{\theta(x) (3 {{x}^{12}}+291 {{x}^{8}}+161 {{x}^{4}}-7) }.
\end{equation*}

Suppose that $p\neq 5$. Then Theorem~\ref{th:nscJN} says that $(x_0,y_0)\in X\cap \tilde{J}[6]$ if and only if $\theta(x_0)=0$. Therefore, $\tilde{U}_{6}(x)=\theta(x)$. Again, $\theta(x)^2$ divides the $s_{4}(x)$ as predicted by Proposition~\ref{prop:divCantor}. We have $\sharp\, (X\cap \tilde{J}[6])=16$.

Suppose that $p=5$. Then $s_4(x)$ vanishes, so Theorem~\ref{th:nscJN} says that $(x_0,y_0)\in X \cap \tilde{J}[6]$ if and only if $x_0$ is a root of $s_5(x)$ but not of $x^5-x$. Hence $\tilde{U}_{6}(x)$ is equal to the radical of the prime-to-$(x^5-x)$ part of $s_5(x)$.  One can check that $s_5(x)$ has no repeated roots and is coprime to $x^5-x$; since $s_5(x)$ has degree $20$, $\sharp\, (X\cap \tilde{J}[6])=40$.
\end{ex:sixtorsion}

\begin{ex:sixtorsionfive}  The above example suggests that, when $g=2$, we look at points of order $6$ in characteristic $5$ in more detail. Let $P(x)$ be monic of degree $5$ and without repeated roots and write $P(x)=x^5+ax^4+bx^3+cx^2+dx+e$ with $a$, $b$, $c$, $d$, $e\in k$.  For the curve $X_{P(x),0}$, we find 
\begin{align*}
\varDelta_{6}(x)=\varGamma_{6}(x)=s_{4}(x)&=-2a(x^{15}+ax^{14}+bx^{13}+cx^{12}+dx^{11})+\\
&(-a e+( b-2 {{a}^{2}})  d-2 {{c}^{2}}+a b c+{{b}^{3}})x^{10}+\lambda_9(x),
\end{align*}
where $\lambda_9(x)$ has degree at most $9$. It follows that if $a\neq 0$, then $\Gamma_{6}(x)$ has degree $15$;  by Theorem~\ref{th:genbound}~\case{a}, $\varDelta_{6}(x)$ has a multiple root at any $x_0$ with $(x_0,y_0)\in X\cap \tilde{J}[6]$. It follows that $\sharp\, (X\cap \tilde{J}[6])\leq 2\lfloor{\frac{15}{2}}\rfloor=14$.  On the other hand, when $a=0$, we see that $\varDelta_{6}(x)$ has degree at most $10$, so if $\varDelta_{6}(x)$ does not vanish, $\sharp\, (X\cap \tilde{J}[6])\leq 10$.

When $\varDelta_{6}(x)$ vanishes (as happens in Example~\ref{ex:sixtorsion}), we have to use the right hand entry $s_{5}(x)$ of $M_{6}(x)$ to bound $\sharp\, (X\cap \tilde{J}[6])$.  A computation shows that this happens if and only if $a=b=c=0$, which since $p=5$ is the case if and only if $X$ is isomorphic to $X_{x^5-x,0}$. Hence $\sharp\, (X\cap \tilde{J}[6])=40$. 

This example shows that, for fixed $g$, $N$ and $p$, we can sometimes obtain bounds that depend on $P$ and $Q$.
\end{ex:sixtorsionfive}

\begin{ex:seventorsion}  We return to the curve $X:y^2+y=x^5$ and we want to determine whether there are any characteristics $p\neq 5$ for which $X\cap J[7]^*$ is non-empty. We have 
\begin{equation*}
M_{7}(x)=\begin{pmatrix} s_{04}(x) & s_{05}(x) & s_{06}(x) \\ s_{14}(x) & s_{15}(x) & s_{16}(x)  \end{pmatrix},
\end{equation*}
(where $s_{i,n}$ is as in paragraph~\ref{sec:hyperprelims}) and the three polynomials $\det{\Sigma_j(x)}$ are $m_{12}(x)=s_{04}(x)s_{15}(x)-s_{05}(x)s_{14}(x)$, $m_{13}(x)=s_{04}(x)s_{16}(x)-s_{06}(x)s_{14}(x)$ and $m_{23}(x)=s_{05}(x)s_{16}(x)-s_{15}(x)s_{06}(x)$.  In particular, $\Gamma_{7}=m_{12}$ and $\Delta_{7}=\Gamma_{7}/F$. 

Recall that $F(x)=Q(x)^2+4P(x)=4x^5+1$. We find  $m_{12}(x)=-5\tilde{U}_5(x)^2F(x)(7 {{x}^{20}}-1218 {{x}^{15}}-463 {{x}^{10}}-198 {{x}^{5}}-3)$, $m_{13}(x)=5\tilde{U}_5(x)F(x)(14 {{x}^{30}}-6594 {{x}^{25}}+16110 {{x}^{20}}+2970 {{x}^{15}}+3285 {{x}^{10}}-159 {{x}^{5}}-1 )$ and $m_{23}(x)=-F(x)(14 {{x}^{40}}-11172 {{x}^{35}}+28112 {{x}^{30}}-295344 {{x}^{25}}+1330 {{x}^{20}}-111384 {{x}^{15}}-1598 {{x}^{10}}-582 {{x}^{5}}-1)$.  The fact that $\tilde{U}_5^2$ divides $m_{12}$ illustrates Proposition~\ref{prop:divCantor}.   By Theorem~\ref{th:nscJN}, $(x_0,y_0)$ belongs to $X\cap J[7]^*$ if and only if $x_0$ is a common root of the factors of degree $20$, $30$ and $40$ of $m_{12}(x)$, $m_{13}(x)$ and $m_{23}(x)$. A computation involving resultants shows that they have a common root if and only if $p=911$, in which case $X\cap J[7]^*$ consists of the points $(x_0,y_0)$ such that $x_0^5=433$.  
\end{ex:seventorsion}

\begin{ex:sevengthree} We consider the curve $X$ with affine model $y^2+y=x^7$, so $X$ has genus $3$ when $p\neq 7$. We find $M_{7}(x)=\begin{pmatrix} s_{4}(x) & s_{5}(x) & s_{6}(x) \end{pmatrix}$, where $s_4(x)=7 {{x}^{3}} \left( 5 {{x}^{21}}+58 {{x}^{14}}-73 {{x}^{7}}+5\right)$, $s_5(x)=-7 {{x}^{2}}\left( 2 {{x}^{28}}+324 {{x}^{21}}-1044 {{x}^{14}}+232 {{x}^{7}}-3\right)$ and $s_6(x)= x\left( 2 {{x}^{35}}+1826 {{x}^{28}}-12030 {{x}^{21}}+6264 {{x}^{14}}-407 {{x}^{7}}+1\right)$. Here $x$ is a common factor of $s_{4}(x)$, $s_{5}(x)$ and $s_{6}(x)$, so $(0,0)$ and $(0,-1)$ are points of order $7$.   A computation using resultants shows that $X\cap J[7]^*=\{(0,0),(0,-1)\}$ in every characteristic $p \neq 7$. 
\end{ex:sevengthree}

\begin{ex:x015w3} As a final example, we apply our results to a modular curve.  Here genus two curves have been studied by many authors (see for example \cite{Po01}), so we move on to genus three. For convenience, we seek an example with a rational Weierstrass point. According to Galbraith \cite{Ga96}, page~46, the modular curve $X:=X_0(51)/W_3$ (where $W_3$ is the Atkin-Lehner involution associated to the divisor $3$ of $51$) has genus $3$ and an affine equation $s^2=(t^4-2t^3+3t^2-6t+5)(t^3-5t^2+3t-3)(t+1)$. Thus $(-1,0)$ is a Weierstrass point and the substitution $s=204y/x^4$, $t+1=-204/x^3$ sends it to infinity and gives the equation ${{y}^{2}}={{x}^{7}}+536 {{x}^{6}}+136272 {{x}^{5}}+21016080 {{x}^{4}}+2122416000 {{x}^{3}}+136819425024 {{x}^{2}}+4946281998336 x+72074394832896$, which is of the form to which  the methods of this paper can be applied.  We try to compute $\tilde{U}_N(x)$ successively for $N=7$, $8$, \dots{} using Corollaries~\ref{cor:pidet} and  \ref{cor:UfN}.   This soon starts to become slow as $N$ increases. We can use a number of tricks to accelerate the computations. Firstly, if $p$ is a prime of good reduction not dividing $N$, then the reduction map on $J[N]$ is injective. Adding the subscript $p$ to denote taking the fiber at $p$, we deduce that $X\cap \tilde{J}[N]$ injects into $X_p\cap \tilde{J}_p[N]$, so for example if $\tilde{U}_{N,X_p}(x)$ is a non-zero constant, then  $\tilde{U}_{N,X}(x)$ is also a non-zero constant and therefore $X\cap \tilde{J}[N]$ is empty. Our model has good reduction away from $2$, $3$ and $17$, and working at small $p$ of good reduction gives considerable speed-up. Secondly, in practice, it is often not necessary to compute the gcd of all the $\varPi_{j,N}$'s, but only of a small subset. Finally, if $x-x_0$ seems to be a factor of $\tilde{U}_N(x)$, one can compute $N(x_0,y_0)$ in $J$ to check whether $(x_0,y_0)$ indeed has order dividing $N$.    After some computation, we found that $X\cap \tilde{J}[N]$ is empty for all $N$ such that $7\leq N\leq 34$, $N\neq 32$, and that $X\cap {J}[32]^*$ consists of the two points $(0, \pm 8489664)$.   

\end{ex:x015w3}

We end the section with a family of examples of curves $X$ such that $\sharp\, (X\cap J[N])\geq N^2$ for infinitely many $N$. 

\begin{prop} Let $\ell$ be an odd prime such that $p$ is a generator of the group $(\bZ/\ell\bZ)^\times$ and let $X$ be the hyperelliptic curve with model $y^2+y=x^\ell$.  Then there are infinitely many integers $N$ for which $\sharp\, (X\cap J[N])\geq N^2$. 
\end{prop}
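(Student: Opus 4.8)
The plan is to use that $k=\overline{\bF}_p$, so that every point of $X(k)$ lies in some $J(\bF_q)$ and is therefore torsion; the whole problem reduces to producing infinitely many square $q$ over which $X$ is \emph{maximal}, i.e. $\sharp X(\bF_q)=q+1+2g\sqrt q$, together with the information that $X(\bF_q)\subseteq J[N]$ for an explicit $N\approx\sqrt q$. First I would record the geometry: here $g=(\ell-1)/2$, and $X$ carries the order-$\ell$ automorphism $\sigma\colon(x,y)\mapsto(\zeta_\ell x,y)$ for a primitive $\ell$-th root of unity $\zeta_\ell$. Since $1+\sigma+\cdots+\sigma^{\ell-1}$ annihilates $J$, the ring $\bZ[\zeta_\ell]$ acts on $J$, giving complex multiplication by $K=\bQ(\zeta_\ell)$ with $[K:\bQ]=\ell-1=2\dim J$. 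Because $p$ generates $(\bZ/\ell\bZ)^\times$, $p$ is inert in $K$; write $(p)=\mathfrak p$. Moreover $\sigma$ is defined precisely over $\bF_{p^{\ell-1}}=\bF_{p^{2g}}$ (the field containing $\zeta_\ell$), so the $K$-action is defined over $\bF_{p^{2g}}$, and the Frobenius $\pi$ of $X/\bF_{p^{2g}}$ commutes with it and hence lies in $\bZ[\zeta_\ell]$, the ring of integers of $K$.

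Next I would determine $\pi$ up to a root of unity. As a Weil number $\pi\bar\pi=p^{2g}$, so $(\pi)(\bar\pi)=\mathfrak p^{2g}$; since $\mathfrak p$ is the only prime over $p$ and complex conjugation equals $\mathrm{Frob}_p^{\,g}$ (because $-1=p^{g}$ in $(\bZ/\ell\bZ)^\times$) and so fixes $\mathfrak p$, one gets $(\pi)=(\bar\pi)=\mathfrak p^{g}=(p^{g})$. Thus $\pi=\zeta\,p^{g}$ for a root of unity $\zeta\in\mu_{2\ell}$. The crux is the sign claim $\zeta=-1$; granting it, $\pi=-p^{g}$ lies in $\bQ\subset K$ and therefore acts on $J$ as the scalar $-p^{g}$. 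Then over $\bF_q$ with $q=p^{2gt}$ and $t$ odd, the Frobenius is $(-p^{g})^{t}=-p^{gt}$, whence $J(\bF_q)=\ker(\pi_q-1)=\ker(-p^{gt}-1)=J[N]$ with $N:=p^{gt}+1=\sqrt q+1$.

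The count then finishes things. Every point of $X(\bF_q)$, with $\infty\mapsto 0$, maps into $J(\bF_q)=J[N]$, so $\sharp(X\cap J[N])\ge\sharp X(\bF_q)=q+1+2g\,p^{gt}=N^2+(2g-2)p^{gt}\ge N^2$, the final inequality holding because $g\ge1$. As $t$ ranges over the odd positive integers this yields infinitely many admissible $N$, proving the proposition. (For $\ell=5$, $p=2$ this is the curve $y^2+y=x^5$ with $t=1$, $N=5$, matching $\sharp(X\cap J[5])=33>25$ from \S\ref{sec:examples}.)

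The main obstacle is exactly the sign $\zeta=-1$, i.e. maximality over $\bF_{p^{2g}}$, as opposed to $\zeta=1$ (a minimal curve, for which the same count gives only $\le N^2$) or an odd-order $\zeta$ (from which no power of $\pi$ is a negative scalar). For odd $p$ I would complete the square to the model $w^2=u^\ell+1$ over $k$ and identify the eigenvalues of $\pi$ with the Jacobi sums over $\bF_{p^{2g}}$ attached to the quadratic character and an order-$\ell$ character; the hypothesis $p^{g}\equiv-1\pmod\ell$ places these in the semiprimitive (``pure'') case, whose explicit evaluation gives each eigenvalue $-p^{g}$, and hence $\zeta=-1$. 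For $p=2$ the curve is Artin--Schreier and the identical conclusion follows from the analogous evaluation of the relevant exponential sums. Establishing this sign uniformly is the one genuinely arithmetic input; the rest is formal.
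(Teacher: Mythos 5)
Your overall architecture is sound and, from the point where you know $\pi=-p^{g}$ over $\bF_{p^{2g}}$ onwards, it coincides with the paper's argument: Frobenius over $\bF_{p^{2gt}}$ ($t$ odd) acts as the scalar $-p^{gt}$, so $J(\bF_{q})=J[N]$ with $N=p^{gt}+1$, and the point count $\sharp X(\bF_q)=q+1+2gp^{gt}=N^2+(2g-2)(N-1)\geq N^2$ finishes the proof. The CM bookkeeping (inertness of $p$, $(\pi)=(p^g)$, hence $\pi=\zeta p^g$ with $\zeta\in\mu_{2\ell}$) is also correct.

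The genuine gap is exactly the step you yourself flag: the sign $\zeta=-1$, i.e.\ maximality of $X$ over $\bF_{p^{2g}}$. This is the entire arithmetic content of the statement (with $\zeta=1$ the same count gives only $\sharp(X\cap J[N])\leq N^2$, and with $\zeta$ of odd order no power of $\pi$ is a negative scalar), and you do not prove it; you appeal to the ``explicit evaluation'' of Gauss/Jacobi sums in the semiprimitive case, plus a separate unexecuted Artin--Schreier computation for $p=2$. That appeal cannot be left implicit: the standard semiprimitive evaluation gives $q^{-1/2}g(\chi)=\pm1$ with a sign depending on parities of the data (it is \emph{not} uniformly $-1$), so you would still have to check that for each of the $2g$ eigenvalues (the Jacobi sums $J(\rho,\chi^a)$, $1\leq a\leq \ell-1$, $\rho$ quadratic) the sign comes out to $-1$ in this particular configuration, and do the $p=2$ case by a different route. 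The paper avoids all of this with an elementary count: since $p$ has order $2g$ modulo $\ell$, the field $\bF_{p^r}$ for $1\leq r\leq g$ contains no nontrivial $\ell$-th root of unity, so $x\mapsto x^{\ell}$ is a bijection of $\bF_{p^r}$ and $\sharp X(\bF_{p^r})=p^r+1$; hence $\tr\pi^r=0$ for $r\leq g$, Newton's identities kill the coefficients of $\chi_1$ in degrees $g,\dots,2g-1$, and the functional equation $t^{2g}\chi_1(p/t)=p^g\chi_1(t)$ kills the rest, forcing $\chi_1(t)=t^{2g}+p^g$ and hence all Frobenius eigenvalues over $\bF_{p^{2g}}$ equal to $-p^g$. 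If you substitute that elementary computation for your unproven sum evaluation, your proof closes; as written, the decisive inequality rests on an asserted, not established, fact.
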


\begin{proof}
We think of $X$ as defined over the field $\bF_p$ with $p$ elements, and we use some well-known properties of curves and their jacobians over finite fields that go back to Weil. The genus of $X$ is $g=\frac{\ell-1}{2}$. For every integer $r\geq 1$, let $\chi_r$ denote the characteristic polynomial of the Frobenius endomorphism $\pi^r$ of $J$ over the extension $\bF_{p^r}$ of degree $r$ of $\bF_p$. We begin by proving that $\chi_1(t)=t^{2g}+p^g$. This is probably well-known, but for the convenience of the reader we sketch the proof. We first compute the number of points of $X$ over $\bF_{p^r}$ for $r\in \{1,2,\dots, g\}$.  Fix such an $r$. By the hypotheses on $p$ and $\ell$, $\bF_{p^r}$ contains no $\ell^{\text{th}}$ root of unity other than $1$. It follows that the map $x\mapsto x^\ell$ is a permutation of $\bF_{p^r}$. Suppose $p\neq 2$. When $x^\ell=-\frac{1}{4}$, there is a unique $y$ such that $y^2+y=x^\ell$. Together with $\infty$, this contributes $2$ points to $X(\bF_{p^r})$. There are $\frac{p^r-1}{2}$ values of $x$ such that $y^2+y=x^\ell$ has two solutions in $\bF_{p^r}$ and a further $\frac{p^r-1}{2}$ values of $x$ such that $y^2+y=x^\ell$ has no solutions in $\bF_{p^r}$.  The first possibility contributes a further $p^r-1$ points. We conclude that $\sharp\, X(\bF_{p^r})=p^r+1$.  A similar argument when $p=2$ gives the same result.

It is well-known that, for a general curve of genus $g$, one has  $\sharp\, X(\bF_{p^r})=p^r+1-\tr{\pi^r}$, where $\tr$ is the trace map of the action on the $q$-adic Tate module for any prime $q\neq p$.  Our calculation therefore shows that $\tr{\pi^r}=0$ for all $r\in \{1,2,\dots,  g\}$. An argument involving symmetric functions of the roots now shows that the coefficient of  degree $i$ vanishes for all $i\in \{g,g+1,\dots, 2g-1\}$. The functional equation $t^{2g}\chi_1(\frac{p}{t})=p^g\chi_1(t)$ then implies that the coefficients of degree $i\in \{1,2,\dots, g-1\}$ also vanish and that $\chi_1(t)=t^{2g}+p^g$ as claimed.  By factoring $\chi_1(t)$ as a complex polynomial, we find that its roots are $\zeta\sqrt{p}$ where $\zeta$ runs over the set of complex numbers with $\zeta^{2g}=-1$. 

From now on suppose that $r\equiv 2g \pmod{4g}$. Since $\zeta^r=-1$ for all $\zeta$ as above and the roots of $\chi_r(t)$ are the $r^{\text{th}}$ powers of those of $\chi_1(t)$, we deduce that $\chi_r(t)=(t+p^{r/2})^{2g}$.  The  semisimplicity of the action of $\pi^r$ on Tate modules now implies that $\pi^r$ is multiplication by $-p^{r/2}$. Let  $N=N_r=p^{r/2}+1$. It is well-known that  $\sharp\,J(\bF_{p^r})=\chi_r(1)$, so that $\sharp\, J(\bF_{p^r})=N^{2g}$.  Also, $J(\bF_{p^r})$ is killed by $\pi^{r/2}-1=-N$ and hence it is killed by $N$; since $\sharp \, J[N]=N^{2g}$ elementary abelian group theory implies that $J(\bF_{p^r})=J[N]$.  It follows that $X\cap J[N]=X(\bF_{p^r})$. On the other hand, $\sharp\, X(\bF_{p^r})=p^r+1-\tr{\pi^r}$, which gives $\sharp\, X(\bF_{p^r})=p^r+1+2gp^{r/2}$.  Since $N=p^{r/2}+1$, we conclude that $\sharp\, (X\cap J[N])=(p^{r/2}+1)^2+2(g-1)p^{r/2}=N^2+2(g-1)(N-1)$. Hence $\sharp\, (X\cap J[N])\geq N^2$ and the result follows.
\end{proof}

\section{Appendix: proof of Proposition~\ref{prop: generalbound}}\label{sec:proofgeneralbound}

Let $k$, $p$, $X$, $J$, $g$ be as in the Introduction. We suppose $g\geq 2$.    We apply the arguments of \S~4 of  \cite{CoKaRi99}; however these authors work in characteristic zero so we briefly review the proof. For any integer $d\geq 1$, write $X^{(d)}$ for the $d^{\mathrm{th}}$ symmetric power of $X$, in other words the quotient of $X^d$ by the group of permutation of the coordinates. Thus $X^{(d)}$ parametrizes effective divisors of degree $d$ on $X$. The embedding $X\to J$ induces morphisms $u_d:X^{(d)}\to J$ for all $d$ and it is known that $u_g$ is birational and surjective. Write $W_d=u_d(X^{(d)})$; in particular $W_{g-1}$ is a theta divisor also denoted $\Theta$. Since $u_1$ is an embedding, we identify $X$ with $W_1$. 

We use some facts and terminology from \cite{ACGH85} (see in particular pages 7--13)\footnote{These authors use complex analytic methods, but the results we need are of a purely algebraic nature.}. Let $D$ be an effective divisor of degree $d$, where $1\leq d\leq g$. Write $\cL(D)$ for the vector subspace of $k(X)$ consisting of rational functions whose polar divisor is $\leq D$. Let $\varphi:X\to \bP^{g-1}$ be the canonical morphism. Then $\varphi(X)$ does not lie in any proper projective subspace of $\bP^{g-1}$, and a special case of a geometric form of the Riemann-Roch theorem asserts that 
\begin{equation*}
\dim_k{\cL(D)}=d-\dim{(\overline{\varphi(D)})},
\end{equation*} 
where $\overline{\varphi(D)}$ is the intersection of all hyperplanes $H\subseteq \bP^{g-1}$ such that $D\leq \varphi^*(H)$ and $\dim{}$ is the projective dimension. Recall that $D$ is \emph{special} if $\dim_k(\cL(K-D))\geq 1$, where $K$ is a canonical divisor. There is a notion of \emph{exceptional special divisor}; if $d\leq g$ then $D$ is exceptional special if and only if $\dim{(\overline{\varphi(D)})}<d-1$. An exceptional special divisor is special, and the converse is true if $d\leq g$. 

The following two lemmas are classical, but most modern texts mention at best generic versions, with points belonging to some unspecified dense open subsets. This would be insufficient for our purposes.

\begin{lemma}\label{lem:notexspec}
Suppose $1\leq d\leq g$ and $D$ is not exceptional special. Then there exists a divisor $E$ of degree $g-d$ such that $D+E$ is not special. Furthermore, if $S\subseteq X$ is finite, we can choose $E$ in such a way that its support is disjoint from $S$.
\end{lemma}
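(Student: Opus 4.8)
The plan is to translate everything into the geometric Riemann--Roch formula and then build $E$ one point at a time. Since $D+E$ will have degree $g$, its canonical image lies in $\bP^{g-1}$ and the formula $\dim_k\cL(D+E)=g-\dim(\overline{\varphi(D+E)})$ shows that I can certify $D+E$ non-special by arranging $\overline{\varphi(D+E)}=\bP^{g-1}$, i.e.\ $\dim_k\cL(D+E)=1$; for then $\dim_k\cL(K-(D+E))=g-1-\dim(\overline{\varphi(D+E)})=0$, so $D+E$ is non-special (even in the strong sense). On the other side, for any effective divisor $D'$ one has $\dim_k\cL(D')\geq 1$ (constants), so the same formula yields the a priori bound $\dim(\overline{\varphi(D')})\leq\deg(D')-1$. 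Applied to $D$, this pins down the hypothesis: a not-exceptional-special $D$ of degree $d\leq g$ is exactly one attaining the maximal value $\dim(\overline{\varphi(D)})=d-1$. Thus I would aim to produce points $q_1,\dots,q_{g-d}$ of $X$, lying off $S$, off one another, and off $\mathrm{supp}(D)$, so that each partial span $\Pi_j:=\overline{\varphi(D+q_1+\cdots+q_j)}$ has dimension exactly $d-1+j$; taking $j=g-d$ gives $\dim\Pi_{g-d}=g-1$ and $E=q_1+\cdots+q_{g-d}$ does the job.

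The inductive step rests on two observations. First, if $\varphi(q_{j+1})\notin\Pi_j$ then $\dim\Pi_{j+1}=\dim\Pi_j+1$. Indeed, from $D+q_1+\cdots+q_j\leq D+q_1+\cdots+q_{j+1}$ every hyperplane contributing to $\Pi_{j+1}$ also contributes to $\Pi_j$, whence $\Pi_{j+1}\supseteq\Pi_j$; moreover every hyperplane $H$ with $D+q_1+\cdots+q_{j+1}\leq\varphi^*(H)$ passes through $\varphi(q_{j+1})$, so $\varphi(q_{j+1})\in\Pi_{j+1}$. Hence $\Pi_{j+1}$ contains the span $\langle\Pi_j,\varphi(q_{j+1})\rangle$, of dimension $\dim\Pi_j+1$, while the a priori bound caps $\dim\Pi_{j+1}$ at $\deg(D+q_1+\cdots+q_{j+1})-1=d+j$, forcing equality. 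Second, such a point $q_{j+1}$ can be found away from any prescribed finite set: as long as $\dim\Pi_j=d-1+j<g-1$ the subspace $\Pi_j$ is proper, and since $\varphi(X)$ lies in no hyperplane (so spans $\bP^{g-1}$) and $X$ is irreducible, $\varphi(X)\cap\Pi_j$ is a proper closed subset of the curve $\varphi(X)$, hence finite; its preimage under the finite morphism $\varphi$ is finite, so I may choose $q_{j+1}$ outside it together with the finitely many excluded points of $S$, $\mathrm{supp}(D)$, and the earlier $q_i$.

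Finally, the a priori bound is exactly what prevents the process from stalling or overshooting: for every $j<g-d$ one has $\dim\Pi_j=d-1+j\leq(d+j)-1<g-1$, so $\Pi_j$ is genuinely proper and the second observation applies, producing the next point. After $g-d$ steps $\dim\Pi_{g-d}=g-1$, so $\overline{\varphi(D+E)}=\bP^{g-1}$ and $D+E$ is non-special, with $E$ supported off $S$ by construction; the case $d=g$ is the base step with $E=0$. I expect the only genuine subtlety to be the hyperelliptic (or otherwise non-injective canonical) case, where $\varphi$ collapses pairs of points: one must argue throughout with the hyperplane-intersection definition of $\overline{\varphi(\cdot)}$ rather than with naive spans of point-images, but since $\varphi$ is a finite morphism onto an irreducible non-degenerate curve, both observations above survive verbatim.
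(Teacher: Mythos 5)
Your proof is correct and follows essentially the same route as the paper: both certify that $D+E$ is non-special by forcing $\overline{\varphi(D+E)}=\bP^{g-1}$, adding $g-d$ points whose canonical images successively enlarge the span of $\overline{\varphi(D)}$ (using that a non-exceptional-special $D$ has $\dim\overline{\varphi(D)}=d-1$). The only difference is cosmetic: you secure disjointness from $S$ by excluding the finite set $\varphi^{-1}(\Pi_j)\cup S\cup\mathrm{supp}(D)$ at each inductive step, whereas the paper invokes closedness of the locus of special divisors; your version is slightly more explicit but amounts to the same argument.
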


\begin{proof} Since $D$ is not exceptional special, $\dim{(\overline{\varphi(D)})}=d-1$ and we can choose $E=\sum_{j=1}^{g-d}[z_j]$, where the $z_j$'s are such that $\overline{\varphi(D)}$ and $\varphi(z_j)$ span $\bP^{g-1}$. Then $D+E$ is not exceptional special hence not special (since it has degree $g$). Since the set of special divisors is closed,  the points $z_j$ can be chosen outside any finite set $S$. 
\end{proof}

If $P\in J$, denote by $t_P:J\to J$ the translation-by-$P$ morphism $t_P(Q)=P+Q$. 

\begin{lemma}\label{lem:goodQexists}
Let $Q\in J$ with $Q\notin W_1$. Then there exists $P\in J$ such that $W_1\subseteq t_P\Theta$ but $Q\notin t_P\Theta$.
\end{lemma}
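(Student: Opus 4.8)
The plan is to parametrize the translates of $\Theta$ that contain $W_1$ and then, among those, to avoid the single point $Q$. First I would record the elementary identity $W_1+W_{g-2}=W_{g-1}=\Theta$, which holds because every effective divisor of degree $g-1$ is of the form $\xi+C$ with $\xi\in X$ and $C$ effective of degree $g-2$, so that $u_1([\xi])+u_{g-2}(C)=u_{g-1}(\xi+C)$. Consequently, for any $c\in W_{g-2}$ the choice $P=-c$ gives $W_1+c\subseteq\Theta$, i.e. $W_1\subseteq\Theta-c=t_P\Theta$. Thus it suffices to find $c\in W_{g-2}$ with $Q+c\notin\Theta$, and then set $P=-c$.

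Next I would translate the remaining condition into the language of linear systems. Writing $c=u_{g-2}(C)$ for an effective divisor $C$ of degree $g-2$ and viewing $Q$ as a degree-$0$ divisor class, one checks that $Q+c\in W_{g-1}$ if and only if the degree-$(g-1)$ class $Q+C+[\infty]$ is effective, i.e. $\dim_k\cL(Q+C+[\infty])\ge 1$. The hypothesis $Q\notin W_1$ says precisely that the degree-$1$ class $Q+[\infty]$ is non-effective, so $\dim_k\cL(Q+[\infty])=0$; by Riemann--Roch this forces $\dim_k\cL(K-Q-[\infty])=g-2$, where $K$ is a canonical divisor. Now I would build $C$ point by point: starting from $B_0=K-Q-[\infty]$, as long as $\dim_k\cL(B_i)\ge 1$ a general point $x_{i+1}\in X$ is not a base point of $|B_i|$, whence $\dim_k\cL(B_i-x_{i+1})=\dim_k\cL(B_i)-1$; such a point exists since $k$ is infinite. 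After $g-2$ steps I obtain $C=x_1+\cdots+x_{g-2}$ with $\dim_k\cL(K-Q-[\infty]-C)=0$. Since $\deg(Q+C+[\infty])=g-1$, Riemann--Roch gives $\dim_k\cL(Q+C+[\infty])=\dim_k\cL(K-Q-[\infty]-C)=0$, so $Q+c\notin\Theta$, and $P=-c$ has the required properties.

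The hard part is exactly the step showing that $Q+W_{g-2}$ is not contained in $\Theta$: since $\dim(Q+W_{g-2})=g-2<g-1=\dim\Theta$ one expects non-containment, but one must produce an explicit witness rather than merely a dimension count. The Serre-duality reformulation converts this into killing the $(g-2)$-dimensional space $\cL(K-Q-[\infty])$ by subtracting points, and the only thing to verify carefully is that each subtracted general point genuinely drops the dimension by one; this is where algebraic closedness (hence infinitude) of $k$ is used, and it parallels the genericity in Lemma~\ref{lem:notexspec}. For $g=2$ the construction degenerates harmlessly: $C=0$, $P=0$, and the claim reduces to $Q\notin W_1=\Theta$, which is the hypothesis.
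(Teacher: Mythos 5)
Your proof is correct, and it shares the paper's opening move: since $W_1+W_{g-2}\subseteq W_{g-1}=\Theta$, everything reduces to producing $c\in W_{g-2}$ with $Q+c\notin\Theta$. Where you diverge is in how that $c$ is found. The paper represents $Q=u_d(D)$ with $d\geq 2$ minimal, notes that minimality forces $D$ to be non-exceptional-special, and then invokes Lemma~\ref{lem:notexspec} --- hence the canonical embedding and the geometric form of Riemann--Roch --- to complete $D$ to a non-special degree-$g$ divisor $D+E$ with $E$ supported away from $\infty$. You argue dually: you recast $Q+c\in\Theta$ as effectivity of the degree-$(g-1)$ class $Q+C+[\infty]$, compute $\dim_k\cL(K-Q-[\infty])=g-2$ from the hypothesis $Q\notin W_1$ via Riemann--Roch, and kill that space by subtracting $g-2$ general points, each dropping the dimension by one because the base locus of a nonempty linear system is finite. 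Since raising $\dim{(\overline{\varphi(D)})}$ is the same as lowering $\dim_k\cL(K-D)$, the two arguments are mirror images, but yours is more self-contained: it needs neither the stratification of $J$ by the $W_d$ nor the notion of exceptional special divisor, and it can ignore whether $C$ meets $\infty$, because your non-membership criterion is the vanishing of $\dim_k\cL(Q+C+[\infty])$ rather than a statement about the support of a distinguished effective representative. Your handling of the degenerate case $g=2$ (where $C=0$ and the claim collapses to the hypothesis) is also correct.
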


\begin{proof}  The conclusion is equivalent to $t_{-P}W_1\subseteq \Theta$ but $t_{-P}Q\notin \Theta$. This is what we shall prove, replacing $P$ by $-P$. 

So let $d$ be such that $Q\in W_d-W_{d-1}$.  By hypothesis $d\geq 2$. Choose $y_i$ ($1\leq i \leq d$) such that $Q=u_d(D)$, where $D=\sum_{i=1}^d [y_i]$. By hypothesis, no $y_i$ is $\infty$.  Also, $D$ is not exceptional special, since if it were, it would be special and therefore linearly equivalent to an effective divisor whose support includes $\infty$, contrary to the minimality of $d$. By  Lemma~\ref{lem:notexspec}, we can find $E$ of degree $g-d$ such that $D+E$ is not special. Furthermore, we can suppose that the support of $E$ does not contain $\infty$. This implies that $D+E\notin X^{(g-1)}$. Put $P=u_{g-d}(E)$. Then $P+Q\notin W_{g-1}=\Theta$. Also, since $d\geq 2$, $g-d\leq g-2$, so $P\in W_{g-2}$ and $t_PW_1\subseteq W_{g-1}$. 
\end{proof}

If $m\in \bZ$ and $m\neq 0$, write $mX$ for the push-forward $m_*X$ of $X$ under the multiplication-by-$m$ isogeny $m:J\to J$.

\begin{lemma}\label{lem:CKRlemma}
Let $m$ be an integer such that $|m|\geq 2$. If either $p$ is not purely inseparable for $X$ or $m$ is not a power of $p$, then $X\cap mX$ is finite of cardinality  at most $gm^2$. 
\end{lemma}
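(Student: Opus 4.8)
The plan is to treat the statement in two parts: first establish that $X\cap mX$ is finite, and then bound the cardinality of the finite intersection by an intersection number against the theta divisor. Since $W_1=X$ and the support of $mX=m_*X$ are both irreducible curves in $J$, the intersection $X\cap mX$ is finite if and only if $mX\neq X$ as subsets of $J$. I expect the finiteness --- i.e.\ ruling out $mX=X$ precisely under the stated hypothesis --- to be the delicate point, while the numerical bound is short.

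For the bound, assume $mX\neq X$. As both are irreducible curves there is a point $Q$ on (the support of) $mX$ with $Q\notin W_1$. Applying Lemma~\ref{lem:goodQexists} to $Q$ yields $P\in J$ with $W_1\subseteq t_P\Theta$ but $Q\notin t_P\Theta$. Then $mX\not\subseteq t_P\Theta$, so $t_P\Theta\cap mX$ is a proper closed subset of the curve $mX$, hence finite, with cardinality at most the intersection number $[\Theta]\cdot[mX]$; and $W_1\subseteq t_P\Theta$ gives $X\cap mX\subseteq t_P\Theta\cap mX$. Thus $\sharp\,(X\cap mX)\leq[\Theta]\cdot[mX]$, and I would evaluate the latter by
\begin{equation*}
[\Theta]\cdot[mX]=[\Theta]\cdot m_*[W_1]=m^*[\Theta]\cdot[W_1]=m^2\,[\Theta]\cdot[W_1]=m^2\,\frac{[\Theta]^{g}}{(g-1)!}=gm^2,
\end{equation*}
where the second equality is the projection formula, the third uses the numerical relation $m^*[\Theta]\equiv m^2[\Theta]$ (valid in any characteristic, since the polarization is invariant under $[-1]$), the fourth is Poincar\'e's formula $[W_1]=[\Theta]^{g-1}/(g-1)!$, and the last uses $[\Theta]^{g}=g!$. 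This gives the required bound once finiteness is known.

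It remains to prove $mX\neq X$ under the hypothesis, for which I would argue the contrapositive: if $mX=X$ then $p$ is purely inseparable for $X$ and $|m|$ is a power of $p$. If $mX=X$ then $\phi:=[m]|_X$ is a surjective morphism $X\to X$ fixing $\infty$ (because $[m]$ fixes $0$), and the universal property of the Jacobian identifies its Albanese extension $\phi_*$ with $[m]$. If $p\nmid m$, then $[m]$, hence $\phi$, is separable; as $g\geq 2$, Riemann--Hurwitz forces $\deg\phi=1$, so $\phi$ is an automorphism and $\phi_*=[m]$ has degree $1$, contradicting $\deg[m]=m^{2g}>1$. Hence $p\mid m$. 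Factoring $\phi$ into its purely inseparable part (a power of Frobenius) followed by a separable map and again using $g\geq 2$ shows the separable part is an isomorphism, so $\phi$ is purely inseparable of degree $p^{b}$. Then $\phi_*\phi^*=[\deg\phi]=[p^{b}]$ together with $\phi_*=[m]$ gives $\phi^*=[p^{b}/m]$ in $\End(J)\otimes\bQ$, and integrality of $\phi^*$ forces $m\mid p^{b}$, i.e.\ $|m|$ is a power of $p$. Finally $[m]=\phi_*$ is, up to the automorphism $[-1]$, the pushforward of a power of Frobenius, hence purely inseparable, and multiplicativity of the separable degree then shows $[p]$ itself is purely inseparable. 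This is exactly the excluded case.

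The main obstacle is this last paragraph: making the positive-characteristic analysis of the self-map $\phi=[m]|_X$ fully rigorous --- in particular the separable/inseparable factorization, the identification $\phi_*=[m]$, and the passage from pure inseparability of $\phi$ to that of $[p]$ on $J$ --- so as to pin down that $mX=X$ occurs if and only if $p$ is purely inseparable for $X$ and $m$ is a power of $p$. By contrast, the intersection-theoretic bound is robust and characteristic-free.
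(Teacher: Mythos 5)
Your proposal is correct and follows essentially the same route as the paper: the cardinality bound is obtained exactly as in the text (Lemma~\ref{lem:goodQexists}, the projection formula, $m^*\Theta\equiv m^2\Theta$ and $(X.\Theta)=g$), and the finiteness rests on the same separable/purely-inseparable factorization of the restricted multiplication map together with Riemann--Hurwitz for $g\geq 2$. The only real difference is organizational: the paper studies the map $X\to mX$ directly (disposing first of the case where $mX$ is singular) and derives a contradiction from the fact that the separable part of $[m]:J\to J$ is not an automorphism under the stated hypothesis, whereas you argue the contrapositive from $mX=X$ and deduce the stronger conclusion that $|m|$ is a power of $p$ and $[p]$ is purely inseparable, at the cost of the extra (standard) inputs $\phi_*\phi^*=[\deg\phi]$ and the identification of $F_*$ with the Frobenius of $J$.
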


\begin{proof}  We first show that $X\cap mX$ is finite. Since $X$ is irreducible and reduced of dimension one, the same is true for $mX$. Hence it suffices to show that $mX\neq X$.  We show that in fact $mX$ is not isomorphic to $X$. This is clear if $mX$ is not smooth, so suppose that $mX$ is smooth. Factor the finite map $m:X\to mX$ as $X\xrightarrow{\iota}Z\xrightarrow{\sigma} mX$, where $\sigma$ is separable and $\iota$ is purely inseparable. Then $Z$ is isomorphic to some Frobenius twist $X^{(p^r)}$ of $X$ (see for example \cite{Liu02}, Proposition~7.4.21) and therefore has genus $g$.  Suppose $mX$ is isomorphic to $X$. Then $Z$ and $mX$ have the same genus $g\geq 2$, and it follows from the Hurwitz genus formula that $\sigma$ has degree $1$, and is therefore an isomorphism. Let $\Jac(Z)$ denote the Jacobian variety of $Z$ and view $Z$ as embedded in $\Jac(Z)$ using $\iota(\infty)$ as base point. Then $m:J\to J$ is the composition of the purely inseparable isogeny $J\to \Jac(Z)$ induced by $\iota$ and the isomorphism $\Jac(Z) \to J$ induced by $\sigma$.  But the hypotheses on $m$ implies that the separable part of $m:J\to J$ is not an isomorphism which is a contradiction. 

To bound the cardinality we apply Lemma~\ref{lem:goodQexists} and an intersection theory argument. It is known that $\Theta$ is symmetric and ample, so that $m^*\Theta$ is algebraically equivalent to $m^2\,\Theta$. Also, $(X.\Theta)=g$. There exists $Q\in mX$ such that $Q\notin X$. Hence there exists $P\in J$ such that $X\subseteq t_P\Theta$, but  $Q\notin t_P\Theta$. Since $mX$ is irreducible, this implies that $mX\cap t_P\Theta$ is finite and 
\begin{equation*}
\sharp\,(mX\cap X) \leq \sharp\,(mX\cap t_P\Theta)\leq (mX.t_P\Theta)= (mX.\Theta)
=(X.m^*\Theta)=m^2(X.\Theta)=gm^2
\end{equation*}
as required.
\end{proof}

\begin{proof}[Proof of Proposition~\ref{prop: generalbound}]

\case{a} Suppose $X\cap J[2]^*$ non-empty and let $\xi$ be an element. Then $2[\xi]-2[\infty]$ is a principal divisor, so $\cL(2[\infty])$ contains a non-constant function and so $2[\infty]$ is a special divisor, which implies that $X$ is hyperelliptic and $\infty$ is a Weierstrass point. The converse follows from Lemma~\ref{lem:first} since $F(x)$ has degree $2g+1$ when $p=0$ or $p\geq 2$ and degree at most $g$ when $p=2$.

\case{b} If $m\equiv 1\pmod{N}$, then multiplication by $m$ acts trivially on $J[N]$. It follows that $X\cap J[N]$ is contained in $X\cap mX$.  We apply  Lemma~\ref{lem:CKRlemma} with $m$ chosen with $|m|$ as small as possible subject to the condition $mX\neq X$. If $p$ is not purely inseparable for $X$ or $N-1$ is not a power of $p$, this gives $m=1-N$; otherwise we find that $m=1+N$ except when $N=3$ and $p=2$, in which case $m=-5$.  
\end{proof}

\end{document}